\newtheorem{theorem}{Theorem}[section]
\newtheorem{conjecture}[theorem]{Conjecture}
\newtheorem{corollary}[theorem]{Corollary}
\newtheorem{lemma}[theorem]{Lemma}
\newtheorem{proposition}[theorem]{Proposition}
\theoremstyle{definition}
\theoremstyle{remark}
\newtheorem{remark}[theorem]{Remark}
\newtheorem{example}[theorem]{Example}
\newcommand{\NN}{\mathbb{N}}
\newcommand{\ZZ}{\mathbb{Z}}
\newcommand{\QQ}{\mathbb{Q}}
\newcommand{\RR}{\mathbb{R}}
\newcommand{\K}[3]{g_{#1, #2, #3}}                				% Kronecker coeff
\newcommand{\RK}[3]{\overline{g}_{#1, #2, #3}}    			% reduced Kroneecker coeffs
\newcommand{\rk}[1]{{r}_{#1}}
\newcommand{\RKO}[1]{\overline{g}_{#1}}           			% reduced Kronecker coefficient with the indices as a single vector
\newcommand{\SSK}[3]{\overline{\overline{g}}_{#1, #2, #3}}      % stable stable Kroneecker coeffs
\newcommand{\ep}{\emptyset} 							% the empty partition
\newcommand{\cut}[1]{\overline{#1}}
\newcommand{\cutt}[1]{\widehat{#1}}
\newcommand{\cutlmn}{\cut{\lambda},\cut{\mu},\cut{\nu}}
\newcommand{\cutcutlmn}{\cut{\cut{\lambda}},\cut{\cut{\mu}},\cut{\cut{\nu}}}
\newcommand{\cbox}{\cellcolor{gray!40} 145}
\newcommand{\vertexop}[2]{\Gamma_{(#1|#2)}} 	% vertex ops
\newcommand{\SetL}{\mathcal{L}}                         % The set of triples of partitions of the same weight that fulfill bound N0
\newcommand{\vvar}{{\bf t}}
\newcommand{\Scol}{\Phi^-}  					%% The gen. series
\newcommand{\scol}{\phi^-}  					%% its coefficients
\newcommand{\polcol}{Q^-}     					%% The polynomial that appears in the factorization
\newcommand{\Srow}{\Phi^+}    				%% The gen. series
\newcommand{\srow}{\phi^+}    					%% its coefficients
\newcommand{\polrow}{Q^+}     				%% The polynomial that appears in the factorization
\newcommand{\op}[2]{\oplus (#1|#2)}  			%% \lambda \op{i}{j} is the partition obtained from \lambda (non-empty)
\newcommand{\Sym}{\textit{Sym}_{\QQ}} 			% symmetric functions with rational coefficients
\newcommand{\scalar}[2]{\left\langle  #1 \, \middle| \, #2  \right\rangle}
\author{Emmanuel Briand}
\thanks{E. Briand and M.Rosas are partially supported by projects MTM2013--40455--P, FQM--333, P12--FQM--2696 and FEDER} 
\author{Amarpreet Rattan}
\author{Mercedes Rosas}
\title[Kronecker coefficients]{On the growth of the Kronecker coefficients.}
\date{\today}
\begin{document}

\maketitle

\begin{abstract} We study the rate of growth experienced by the Kronecker coefficients as we add cells to the rows and columns
indexing partitions. We do this by moving to the setting of the reduced Kronecker coefficients.
\end{abstract}

\section{Introduction}

The Kronecker coefficients $\K{\lambda}{\mu}{\nu}$ are fundamental constants in Representation Theory. 
They describe how irreducible representations of $GL(V \otimes W)$ split, when viewed as representations of $GL(V) \times GL(W)$. They are also the structural constants for the tensor products of irreducible representations of the symmetric groups.  

In spite of their importance, very little is known about the Kronecker
coefficients, and this leaves some fundamental questions unanswered.  For
example, are the Kronecker coefficients described by a positive combinatorial
rule, akin to the Littlewood--Richardson rule
\cite{Ballantine:Orellana,Blasiak:hook,Liu}?  How difficult is it, algorithmically, to compute Kronecker coefficients  \cite{GCT6,  Burgisser:Ikenmeyer, BOR-CC,PakPanova:complexity}, or to determine they are nonzero \cite{Ressayre, Ikenmeyer:Mulmuley:Walter:Vanishing}?  Remarkably, this latter problem relates  the Kronecker coefficients with the \emph{quantum marginal problem} in Quantum Information Theory \cite{Klyachko, Christandl:Mitchison, Christandl:Harrow:Mitchison}.

A feature of the Kronecker coefficients that has been studied recently is the
\emph{stability} phenomenon: the fact that some sequences of Kronecker
coefficients are eventually constant. The first example of such a behavior was
observed by Murnaghan in 1938 \cite{Murnaghan:1938}. The Kronecker coefficients
$\K{\lambda}{\mu}{\nu}$ are indexed by triples of partitions
$(\lambda,\mu,\nu)$, and Murnaghan's stable sequences are obtained by
incrementing the first part of all three partitions at each step. Their limit
values (the \emph{reduced}, or \emph{stable} Kronecker coefficients) are
interesting objects in their own right.  As seen in \cite{BOR-JA}, they contain
enough information to recover the value of the Kronecker coefficients and are believed to be simpler to understand.
For example, it is conjectured that they satisfy the \emph{saturation property} \cite{Kirillov:saturation, Klyachko:saturation}, and they have been used to find efficient formulas for computing some Kronecker coefficients \cite{BOR-CC}.

 Many more sequences of Kronecker coefficients are stable: large families have
 been produced by means of methods from geometry \cite{Manivel,
 Manivel:asymptotics1, Manivel:asymptotics2}, enumerative combinatorics
 \cite{Vallejo:tomography, Vallejo} or symmetric functions calculations
 \cite{PakPanova:bounds}. These stable sequences of Kronecker coefficients have
 general term of the form $\K{\lambda + n \alpha}{\mu + n\beta}{\nu+n\gamma}$, where we
 add, and multiply a partition by a scalar, as it is usually done for vectors.

 Murnaghan's case corresponds to $\alpha=\beta=\gamma=(1)$. 
 
 In \cite{Stembridge}, it was conjectured that $\K{\lambda + n \alpha}{\mu +
 n\beta}{\nu+n\gamma}$ stabilizes (for any $\lambda$, $\mu$, $\nu$) if and only
 if $\K{\alpha}{\beta}{\gamma} = 1$. This was proved in
 \cite{SamSnowden:Stembridge}. These stability phenomena are, as an aside, the
 prototype for the very general \emph{representation stability} phenomenon
 unveiled in algebraic topology;  see \cite{Church:Farb:stability,ChurchEllenbergFarb:FI, SamSnowdenGeneralizedStability}.

In this paper we present two new results related to the stability of Kronecker
coefficients. The first one is indeed a result of stability, but the sequence
that we consider is not of the type $\K{\lambda + n \alpha}{\mu +
n\beta}{\nu+n\gamma}$. At each step, we simultaneously
increase the first row and the first column of the Young diagrams of all three indexing partitions. We call this phenomenon \emph{hook stability}. 
 
Note that this hook stability does not seem to fit straighforwardly in the
representation theory of fixed general linear groups, since it involves
sequences of Kronecker coefficients indexed by partitions with unbounded
lengths.

The second result is about the \emph{asymptotics} of some sequences of Kronecker coefficients of type $\K{\lambda + n \alpha}{\mu + n\beta}{\nu+n\gamma}$, that do not stabilize, but are shown to grow linearly.

We describe the relevant coefficients (the limits for hook stability, and the coefficients appearing in quasipolynomial formulas for the asymptotic estimates, for the result on linear growth) by means of generating series. 

Our  tools are the following:
\begin{enumerate}
\item \emph{Vertex operators on symmetric functions}.  Vertex operators on
	symmetric functions provide generating functions for Schur functions.
	They have been used widely by Thibon and his collaborators to establish
	several properties of stability.  See Section \ref{sec:vertop} for
	references and a basic treatment of vertex operators.
\item \emph{The $\lambda$--ring formalism for symmetric functions}.  This formalism is in
	fact a calculus on morphisms from the algebra of symmetric functions.
	See  Section \ref{sec:lambdaring} for basic definitions and references.
\item \emph{Schur generating series} will be used to encode families of constants indexed
	by several partitions by means of symmetric series in several sets of
	variables. Important structural constants for symmetric functions have
	very compact Schur generaing series when expressed within the
	Lambda--ring formalism: $\sigma[XY+XZ]$ for Littlewood--Richardson
	coefficients, and $\sigma[XYZ]$ for Kronecker coefficients. The
	coefficients introduced in this paper also have have simple Schur generating series. 
\end{enumerate}
The two sets of results in this paper (hook stability  and linear growth) are obtained by first considering stability properties and linear growth for families of 
reduced Kronecker coefficients, and then translating the results
obtained to Kronecker coefficients.

The two sets of results for reduced Kronecker coefficients are obtained the same way: by simplifying Schur generating series for sequences of reduced Kronecker coefficients  by means of vertex operators. Because we have at our disposal two conjugate vertex operators (one related to first row increasing, the other to first column increasing) we simultaneously get these two sets of results.

This article is structured as follows. In Section \ref{prelim} we introduce the
basic tools used in this article. In particular, we review the two vertex operators that allow us to increase the sizes of the first row  and column of a partition.

Section \ref{rkro} presents the reduced Kronecker coefficients: it
includes an elementary proof of Brion's formula \cite{Brion},  which we have not seen in the literature,   and an elementary derivation of the generating function for the reduced Kronecker coefficients indexed by one row (and one column) shapes.

Section \ref{a-factorization}  provides the main technical lemma that allows us
to factor a symmetric function (polynomial) out of some symmetric series. This
lemma is applied twice in Sections \ref{hook-stability} and \ref{section2row},
once with each of the two conjugate vertex operators.

In Section  \ref{hook-stability}, we prove stability for the sequences of
reduced Kronecker coefficients whose indexing partitions have their first column
growing (Section \ref{hook reduced}). We deduce in Section \ref{hook subsection}
the hook stability property for Kronecker coefficients. Another approach to
proving this property is explored in Section \ref{alternative subsection}. We are not able to get an alternative proof of the hook stability property through this approach, but are led to the conjecture that Kronecker coefficients weakly increase when 
incrementing at the same time the first row and the first column of the diagram of each of their three indexing partitions (Conjecture \ref{conj 111}).

In Section \ref{section2row}, we study the effect of the growth of the first
rows of the partitions indexing a reduced Kronecker coefficient and obtain
linear quasipolynomial formulas when these first rows are big enough. This also provides asymptotic linear quasipolynomial formulas for some sequences of Kronecker coefficients $\K{\lambda +n \alpha}{\mu + n \beta}{\nu + n \gamma}$ where, together with other conditions, the partitions $\alpha$, $\beta$ and $\gamma$ have at most two parts.

Schur generating functions for the limit values $\SSK{\lambda}{\mu}{\nu}$ in the hook stability property, and the coefficients of the quasipolynomial formulas of Section \ref{section2row}, are derived in Section \ref{generating series}.

 Section \ref{final-remarks} reviews some examples appearing in the literature, that study the effect of increasing the sizes of the other rows and columns.

 Two appendices round up the results. In Appendix \ref{apptab}, we provide a
 table with some constants appearing in Theorem \ref{prop:ABC}. Appendix
 \ref{appbounds} contains a proof of  some stability bounds for the hook stability described by Theorem \ref{thm:hook stab}.

%%%%%%%%%%%%%%%%%%%%%%%%%%%%%%%%%%%%%%%%%%%%%%%%%%%%%%%%%%%%%%%%%%%%%%%%%%%%%%%
%%%%%%%%%%%%%%%%%%%%%%%%%%%%%%%%%%%%%%%%%%%%%%%%%%%%%%%%%%%%%%%%%%%%%%%%%%%%%%%
%%%%%%%%%%%%%%%%%%%%%%%%%%%%%%%%%%%%%%%%%%%%%%%%%%%%%%%%%%%%%%%%%%%%%%%%%%%%%%%

 \section{Preliminaries}\label{prelim}

\subsection{Partitions}\label{partitions}

A \emph{partition of $n$} is a weakly descending sequence of non-negative integers
whose sum is $n$. Two  partitions that differ by a string of zeros are
considered to be the same. The positive terms in a partition are called its
\emph{parts}, and the \emph{length $\ell(\lambda)$} of the partition $\lambda$
is  defined as the number of parts. The weight $|\lambda|$ of the partition
$\lambda$ is the sum of its parts. The conjugate of a partition $\lambda$ will
be denoted $\lambda'$, and with parts $\lambda_1', \lambda_2',
\ldots$.  The empty partition will be denoted by $\ep$.

Let $\cup$ and $+$ be the standard operations on partitions, as defined in \cite[I.\S1]{Macdonald}. If $n$ is a nonnegative integer and $\lambda=(\lambda_1, \lambda_2,\ldots,\lambda_k)$ is a partition, then $n \lambda$ is the dilation of $\lambda$ by a factor $n$, that is the partition $(n \lambda_1, n\lambda_2, \ldots, n \lambda_k)$.

Let  $\cut{\lambda}$ be the partition obtained after removing the first term of $\lambda$. This operation can be iterated:  $\cut{\cut{\lambda}}$ is the partition obtained from $\lambda$ by removing the first two terms.
 
Let  $\cutt{\lambda}$ be the partition obtained after removing the first row and the first column in the diagram of $\lambda$. 
 
The sequence defined by prepending a first term $a$ to the partition $\lambda$ will be denoted $(a,\lambda)$. The resulting sequence $(a,\lambda_1,\lambda_2, \ldots)$ is not necessarily a partition since we may have that $a < \lambda_1$.
Given  an integer $N$, we denote by $\lambda[N]$ the sequence
$(N-|\lambda|,\lambda)$, which is also not necessarily a partition.

Finally, for any non--empty partition $\lambda$, we will write $\lambda\op{a}{b}$ for $\lambda +(a) \cup (1^b)$.

For example, if $\lambda=(8,3,3,1)$, then we have that $\cut{\lambda}=(3,3,1)$, $\cut{\cut{\lambda}}=(3,1)$, $\cutt{\lambda}=(2,2)$, $\lambda[20]=(5,\lambda)=(5,8,3,3,1)$ (not a partition),  $\lambda[25]=(10,\lambda)=(10,8,3,3,1)$  and  $\lambda\op{7}{4}=(15,3,3,1,1,1,1,1)$.

\subsection{Symmetric functions, Schur functions and Jacobi--Trudi determinants}
%% Schur functions and Jacobi--Trudi det
For $\lambda$ a finite sequence of integers of length $n$, we define 
\[
s_{\lambda}=\det \left( h_{\lambda_j+i-j} \right)_{i,j=1\ldots n},
\]
where $h_0=1$ and $h_k=0$ for $k<0$. 

The Jacobi-Trudi formula implies that when $\lambda$ is a partition then
$s_{\lambda}$ is the Schur function indexed by $\lambda$. Since rearranging the
columns of the above determinant suffices to order the parts of any sequence of integers, we always obtain for $s_{\lambda}$ either  a Schur function (up to  sign), or zero. For example, 
$
s_{\lambda_1,\lambda_2}=\left| 
\begin{matrix}
h_{\lambda_1}  & h_{\lambda_2-1}\\
h_{\lambda_1+1}& h_{\lambda_2}
\end{matrix}
\right|,
$ 
$s_{1,2}=0
$, and 
$
s_{1,3}=-s_{(2,2)}
$.

%% scalar product

Let $\Sym=\Sym(X)$ be the algebra of symmetric functions with rational coefficients, with underlying alphabets $X=\{x_1,x_2,\ldots\}$. We denote by  $\scalar{ }{}$ or $\scalar{ }{}_X$ the scalar product  on $\Sym$ defined by saying that the Schur functions are an orthonormal basis. For any symmetric function $f$, $f^{\perp}$ will denote the adjoint of multiplication by $f$.  The scalar product is conveniently extended whenever it makes sense. For instance 
\[
\scalar{\sum_{i=0}^{\infty} f_i t^i}{\sum_{j=0}^{\infty} g_j}=\sum_{i=0}^{\infty}\left( \sum_{j=0}^{\infty}\scalar{f_i}{g_j} \right)t^i
\]
if, for each $i$, $f_i$ is a symmetric function and for each $j$, $g_j$ is a homogeneous symmetric function of degree $j$.

We also consider symmetric functions in different alphabets (set of variables)
$X$, $Y$, $Z$. The scalar product is canonically extended to the algebras
$\Sym(X) \otimes_{\QQ} \Sym(Y)$, $\Sym(X) \otimes_{\QQ} \Sym(Y) \otimes_{\QQ}
\Sym(Z)$ they generate, and denoted by $\scalar{}{}_{X,Y}$ and
$\scalar{}{}_{X,Y,Z}$.

\subsection{The $\lambda$--ring formalism for symmetric functions, and
specializations.}\label{sec:lambdaring}
%% lambda-ring formalism
Let $\mathcal{A}$ be any commutative algebra over a field $\mathcal{K}$ of characteristic zero

 Given a morphism of algebras $A$ from $\Sym$ to $\mathcal{A}$, the image of a symmetric function $f$ under $A$ will be denoted with $f[A]$ rather than $A(f)$ and called ``specialization of $f$ at  $A$''. 

Since the power sum symmetric functions $p_k$ ($k \geq 1$) generate $\Sym$ and are algebraically independent, the map 
\begin{equation}\label{bijection}
A \mapsto (p_1[A], p_2[A], \ldots)
\end{equation}
is a bijection from the set of all morphisms of algebras from $\Sym$ to $\mathcal{A}$ 
to the set of infinite sequences of elements from $\mathcal{A}$. This set of sequences is endowed with its operations of component-wise sum, product, and product by a scalar. The bijection \eqref{bijection} is used to lift these operations to the set of morphism from $\Sym$ to $\mathcal{A}$. This defines expressions like $f[A+B]$, $f[-A]$, $f[AB]$, $f[A/B]$ \ldots where $f$ is a symmetric function and $A$ and $B$ are two specializations, and more general expressions $f[P(A,B,\ldots)]$ where $P(A,B,\ldots)$ is a rational function in several specializations $A$, $B$ \ldots with coefficients in $\mathcal{K}$. Note that, by definition, for any power sum $p_k$ ($k \geq 1$), specializations $A$ and $B$ and scalar $z$,
\begin{align*}
&&p_k[A+B]=p_k[A]+p_k[B],&&p_k[AB]=p_k[A]p_k[B], &&p_k[zA]=z p_k[A].
\end{align*}

Here are some important specializations. The specialization at $-1$ is defined on power sums by $p_k[-1]=-1$ for all $k$. The specialization $\varepsilon$ is defined by $p_k[\varepsilon]=(-1)^k$ for all $k$.
The product of the two previous specializations is $-\varepsilon$ and fulfills
$p_k[-\varepsilon X]=(-1)^{k+1} p_k[X]$ for all $k$. As a consequence, the
transformation $f[X]  \mapsto f[-\varepsilon X]$ coincides with the standard involution $\omega$ defined by $\omega s_{\lambda}=s_{\lambda'}$ for all $\lambda$.
 There is also the specialization $X^{\perp}$ such that for any symmetric function $f$,
 $f[X^{\perp}]=f^{\perp}$, the adjoint of the multiplication by $f$ with respect
 to $\scalar{}{}_{X}$. 
 
%% The sigma function
\begin{lemma}\label{L1}
Let $\sigma[X]=\sum_{n \ge 0} h_n[X]$ be the generating function for the complete homogeneous symmetric functions in $X$. It has the following well-known properties: 
\begin{enumerate}

\item Given an alphabet $X$, 
\[
\sigma[X]=\prod_{x \in X} \frac{1}{1-x} \text{ and }\sigma[-X]=\prod_{x \in X}  (1-x).
\]
In particular for a single variable $t$, $\sigma[t]=1/(1-t)$ and $\sigma[-t]=1-t$;

\item Cauchy's Identity :  $\sigma[XY]=\sum_{\lambda} s_{\lambda}[X] s_{\lambda}[Y]$.

\item Given any two alphabets $A$ and $B$, $\sigma[A+B]=\sigma[A] \sigma[B].$

\item The adjoint of multiplication by $\sigma[AX]$  with respect to
	$\scalar{}{}_{X}$. It has the following effect:
$
\sigma[AX^{\perp}] f[X] = f[X+A].
$
\item As a particular case, we have the \emph{reproducing kernel property} of $\sigma[AX]$: for any symmetric function $f$,
$
\scalar{\sigma[AX]}{f[X]}=f[A].
$
\end{enumerate}
Standard references for these results are \cite{Macdonald} and \cite{Lascoux}. See also \cite{BMOR}.
\end{lemma}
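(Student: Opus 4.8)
The plan is to derive all five parts from one foundational fact: the exponential formula
\[
\sigma[X]=\exp\!\left(\sum_{k\geq 1}\frac{p_k[X]}{k}\right),
\]
which is obtained by taking logarithms in the elementary one-variable identity $\sum_{n\geq 0}h_n[X]\,t^n=\prod_{x\in X}(1-xt)^{-1}$ and then setting $t=1$ (legitimate degreewise in the completion of $\Sym$, since $p_k$ has degree $k$). Granting this, part (1) is immediate: by definition of an alphabet $p_k[X]=\sum_{x\in X}x^k$, so $\log\sigma[X]=\sum_{k\geq 1}\tfrac1k\sum_{x\in X}x^k=-\sum_{x\in X}\log(1-x)$, whence $\sigma[X]=\prod_{x\in X}(1-x)^{-1}$; since $p_k[-X]=-p_k[X]$, replacing $X$ by $-X$ gives $\sigma[-X]=\prod_{x\in X}(1-x)$, and the single-variable statements are the case $X=\{t\}$. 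Part (3) is just as quick: from $p_k[A+B]=p_k[A]+p_k[B]$ we get $\log\sigma[A+B]=\log\sigma[A]+\log\sigma[B]$, and exponentiating yields $\sigma[A+B]=\sigma[A]\,\sigma[B]$.

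For part (2) I would combine (1) and (3) with $p_k[XY]=p_k[X]p_k[Y]$ to write $\sigma[XY]=\prod_{x\in X,\,y\in Y}(1-xy)^{-1}$, and then invoke the Cauchy identity in one of its two standard proofs --- the bijective proof via the RSK correspondence, or the algebraic proof that the Schur functions form a self-dual basis for $\scalar{}{}$ --- rather than reproving it from scratch. Part (5) is then a one-line consequence of (2): using $\sigma[AX]=\sum_\lambda s_\lambda[A]\,s_\lambda[X]$ and orthonormality of the Schur functions,
\[
\scalar{\sigma[AX]}{f[X]}_X=\sum_\lambda s_\lambda[A]\,\scalar{s_\lambda[X]}{f[X]}_X=f[A],
\]
since $\scalar{s_\lambda}{f}_X$ is the coefficient of $s_\lambda$ in the Schur expansion of $f$.

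Part (4) is the only part needing a short argument. From the Cauchy expansion one reads off $\sigma[AX^{\perp}]=\sum_\lambda s_\lambda[A]\,s_\lambda^{\perp}$, an operator on $\Sym(X)$; since $s_\lambda^{\perp}$ lowers degree by $|\lambda|$, this sum acts through finitely many terms in each degree and is well defined. By linearity it suffices to evaluate it on $f=s_\nu$. Using $s_\lambda^{\perp}s_\nu=\sum_\mu c^{\nu}_{\lambda\mu}\,s_\mu$, with $c^{\nu}_{\lambda\mu}$ the Littlewood--Richardson coefficients (equivalently the structure constants of the coproduct of $\Sym$), one obtains
\[
\sigma[AX^{\perp}]\,s_\nu[X]=\sum_{\lambda,\mu}c^{\nu}_{\lambda\mu}\,s_\lambda[A]\,s_\mu[X],
\]
which is exactly $s_\nu[X+A]$ by the classical addition formula. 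Since $\nu$ was arbitrary, part (4) follows for every symmetric function $f$.

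I expect no genuine obstacle: the statement collates textbook facts. The only point requiring care --- ``hard'' only in the sense of rigour --- is to fix at the outset the setting in which the infinite sums and products live, namely the degreewise completion of $\Sym$, where $\sigma[X]=\sum_{n\geq 0}h_n[X]$ is an honest element, the factorizations in (1)--(3) are identities of formal series, and $\sigma[AX^{\perp}]$ acts in each degree through finitely many terms. With that in place everything reduces to the exponential formula and the Cauchy identity, both found in \cite{Macdonald} and \cite{Lascoux}, so I would write the proof as a brief derivation from those rather than as a self-contained development.
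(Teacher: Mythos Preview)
Your argument is correct. Each part is derived cleanly from the exponential formula $\sigma[X]=\exp\bigl(\sum_{k\ge 1}p_k[X]/k\bigr)$ and the Cauchy identity, and the care you take about the degreewise completion is appropriate.

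The paper, however, does not prove this lemma at all: it simply states the five properties and points to Macdonald, Lascoux, and \cite{BMOR} as standard references. So there is nothing to compare against --- you have supplied a short self-contained derivation where the authors chose to cite. Your choice to build everything on the exponential formula is natural and matches the spirit of the $\lambda$-ring formalism the paper sets up in Section~\ref{sec:lambdaring} (where specializations are defined through their effect on the $p_k$). If anything, your proof of part (4) via the Littlewood--Richardson expansion of $s_\nu[X+A]$ is exactly how the text you cite would do it, so this is the standard route written out.
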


Finally, it is well-known that using operations on alphabet, we can recover the
Littlewood--Richardson,  $c_{\lambda,\mu,\nu} $  and the Kronecker coefficients,
$g_{\lambda,\mu,\nu}$ :
\begin{align}
s_{\lambda}[X+Y]=\sum_{\mu,\nu} c_{\lambda, \mu,\nu}
s_{\mu}[X]s_{\nu}[Y]\notag\\
s_{\lambda}[XY]=\sum_{\mu,\nu} g_{\lambda,\mu,\nu}
s_{\mu}[X]s_{\nu}[Y].\label{def:kron}
\end{align}

While \eqref{def:kron} can be used to define the Kronecker coefficients, they can also be defined as follows.
Let $\lambda$ and $\mu$ be partitions of some integers.  Define a product $\ast$ on the ring of symmetric functions where the 
\begin{equation}\label{def:ast}
	p_\lambda \ast p_\mu = \delta_{\lambda, \mu} z^{-1}_\lambda p_\lambda,
\end{equation}
where, as usual, $z_\lambda = 1^{m_1} m_1! \cdots n^{m_n} m_n!$ and $m_i$ are
the number of parts of $\lambda$ equal to $i$.  Then, if $\nu$ is also a
partition, the Kronecker coefficients $g_{\lambda, \mu, \nu}$ are
\begin{equation*}
	s_\mu[X] \ast s_\nu[X] = \sum_\lambda g_{\lambda, \mu, \nu}
	s_\lambda[X].
\end{equation*}
It's clear from \eqref{def:ast}, that $g_{\lambda, \mu, \nu} = 0$ if $\lambda,
\mu$ and $\nu$ are not partitions of the same integer.

\subsection{Vertex operators for symmetric functions.}
%% Vertex operators

\subsubsection{Vertex operators.}\label{sec:vertop}

The \emph{vertex operator for symmetric functions} $\vertexop{t}{X}$ is defined on the basis of Schur functions in $X$ by:
\[
\vertexop{t}{X}: s_{\alpha}[X] \mapsto \sum_{n \in \ZZ} s_{(n,\alpha)}[X] t^n
\]
where $t$ is an additional variable. %%For example, $\vertexop{t}{X}( s_{\ep}[X])=\sigma[tX].$
Recall that Schur functions are defined using the Jacobi--Trudi identity in terms of the complete homogeneous basis, and that
$h_{n}$ is equal to zero when $n <0$.

From \cite[Lemma 3.1]{ThibonCarre}, this operator can be factorized as $\vertexop{t}{X}=\sigma[tX] \sigma\left[-\frac{1}{t} X^{\perp}\right]$ and therefore fulfills
\begin{equation}\label{factorizationGamma}
\vertexop{t}{X} f[X] = \sigma[t X] f\left[ X-\frac{1}{t}\right]
\end{equation}
for any symmetric function $f$. In particular, given any partition $\alpha$,
\begin{equation}\label{V1}
 \sum_{n \in \ZZ} s_{(n,\alpha)}[X] t^n =  \vertexop{t}{X} s_{\alpha}[X] = \sigma[t X] s_{\alpha}\left[ X-\frac{1}{t}\right].
\end{equation}
Alternatively, using the index $n$ for the weights of the partitions in the formal series instead of for the first parts, we have also
\begin{equation}\label{V2}
 \sum_{n \in \ZZ} s_{(n-|\alpha|,\alpha)}[X] t^n =  t^{|\alpha|} \vertexop{t}{X} s_{\alpha}[X] = \sigma[t X] s_{\alpha}\left[ t X-1 \right].
\end{equation}This follows from \eqref{factorizationGamma}, and the fact that Schur functions are homogeneous.

This vertex operator is a classical tool in the theory of symmetric functions used in particular by Jing (see for instance \cite{Jing:Spin}), and, for the study of various phenomena of stability, by Thibon and his collaborators \cite{Thibon,ThibonScharfWybourne, ThibonCarre, ThibonScharf:innerPlethysm,ThibonLascoux:stability}.  It is the generating series for Bernstein's creation operators introduced in \cite{Zelevinsky:LNM}. See also \cite[I.\S 5 Ex. 29]{Macdonald}. 

\subsubsection{Vertex operators for columns}

The vertex operator $\vertexop{t}{X}$ associates to any Schur function
$s_{\alpha}$ a generating series for the Schur functions $s_{(n,\alpha)}$
obtained by prepending a first part $n$ to $\alpha$. Let us build another
operator that associates to $s_{\alpha}$ a generating series for the Schur
functions $s_{\alpha+(1^n)}$ obtained by adjoining to the Young diagram of $\alpha$ a first column $(1^n)$. For this, apply the involution $\omega$ (that maps $s_{\alpha}$ to $s_{\alpha'}$), next the vertex operator $\vertexop{t}{X}$ (appends a first row to $\alpha'$) and then again $\omega$ (the new first row of $\alpha'$ becomes a new first column attached to $\alpha$). That is, our new operator is $\omega \vertexop{t}{X} \omega$. It sends any Schur function $s_{\alpha}$ to  $\omega \vertexop{t}{X} s_{\alpha'}$, which is equal to $\sum_n \omega s_{(n,\alpha')} t^n$. 

 Set $\lambda=(n,\alpha')$.  Recall that $s_{(n,\alpha')}$ is the Jacobi--Trudi determinant $\det(h_{\lambda_{j} +i-j})_{i,j}$ of order $\ell(\alpha')+1$. The involution $\omega$ exchanges  $h_k$ with  $e_k$. Therefore $\omega s_{(n,\alpha')}=\det(e_{\lambda_{j} +i-j})_{i,j}$. We will denote with $\tilde{s}_{(1^n|\alpha)}$ the value of  this determinant. When $n \geq \alpha'_1$, this determinant is equal to $s_{\alpha+(1^n)}$, the Schur function indexed by the partition obtained from $\alpha$ by adding a new column of size $n$ to its diagram. 
 For instance, $\vertexop{-\varepsilon t}{X}(s_{\emptyset})=\omega( \sigma[tX]),$ the generating function for the elementary symmetric functions.
 We have thus
\begin{align*}
\omega \vertexop{t}{X} \omega s_{\alpha}[X] 
&= \sum_{n \in \ZZ} \tilde{s}_{(1^n|\alpha)}[X] t^n \\
&= \sum_{n \geq \alpha'_1} s_{\alpha+(1^n)}[X] t^n + \text{ terms of degree $< \alpha'_1$ in $t$}.
\end{align*}
Since $\omega$ coincides with $f[X] \mapsto f[-\varepsilon X]$, 
\begin{align*}
(\omega \vertexop{t}{X} \omega)(f)
&=\omega \vertexop{t}{X} f[-\varepsilon X]
=\omega \sigma[tX]f\left[-\varepsilon \left(X-\frac{1}{t}\right)\right],\\
&=\sigma[-\varepsilon tX]f\left[-\varepsilon \left(-\varepsilon X-\frac{1}{t}\right)\right]=\sigma[-\varepsilon tX]f\left[X-\frac{1}{(-\varepsilon t)}\right],
\end{align*}
and thus
\[
\omega \vertexop{t}{X} \omega = \sigma[-\varepsilon t X] \sigma\left[-\frac{1}{-\varepsilon t} X^{\perp}\right].
\]
We will write $\vertexop{-\varepsilon t}{X}$ for $\omega \vertexop{t}{X} \omega$.

The operator $\vertexop{-\varepsilon t}{X}$ appears, for instance,  in \cite{Jing:Spin, JarvisYungVertex2, JingNie}. The operators $\vertexop{t}{X}$ and $\vertexop{-\varepsilon t}{X}$ are $V_\alpha(t)$ with $\alpha=1$ and $\alpha=-1$ respectively in the notations of \cite{JarvisYungVertex2}. They are $S(t)$ and $S^*(t)$ in the notations of \cite{JingNie}.

\section{Reduced Kronecker coefficients}\label{rkro}

\subsection{Murnaghan Stability}\label{Murnaghan}

Murnaghan observed \cite{Murnaghan:1938} that, for any triple of partitions
$\lambda$, $\mu$, $\nu$ of some positive integer $n$, the sequence of Kronecker coefficients $\K{\lambda+(m)}{\mu+(m)}{\nu+(m)}$ stabilizes (\emph{i.e.} is eventually constant).

Several classical proofs exist of this fact. It has been shown by Littlewood using invariant theory \cite{Littlewood:1958}, by Brion using geometric methods \cite[\S 3.4, Corollary 1]{Brion}, and by Thibon  by means of vertex operators \cite[\S 3]{Thibon}. More recent proofs have been obtained by interpreting the Kronecker coefficients in the setting of representations of partition algebras \cite{BDO}, and by constructing appropriate frameworks for addressing stability in general \cite{ChurchEllenbergFarb:FI,SamSnowdenGeneralizedStability}.

 The stable value of the sequence $\K{\lambda+(m)}{\mu+(m)}{\nu+(m)}$ does not depend on the first part of $\lambda$, $\mu$ and $\nu$. Accordingly, it will be denoted $\RK{\cut{\lambda}}{\cut{\mu}}{\cut{\nu}}$,
%The coefficients $\RK{\alpha}{\beta}{\gamma}$ will be
and  called here a \emph{reduced Kronecker coefficient}.  More precisely, the
sequence with general term $g_{\cut{\lambda}[N], \cut{\mu}[N], \cut{\nu}[N]}$, beginning
at some suitably large $N$, has a limit whose value we label with
$\cut{g}_{\cut{\lambda}, \cut{\mu}, \cut{\nu}}$.  Thus, while the reduced
Kronecker coefficients are defined for any triple of partitions, the Kronecker
coefficients are only defined for triples of partitions of the same integer.

M. Brion has shown that the sequence of Kronecker coefficients $\K{\lambda+(m)}{\mu+(m)}{\nu+(m)}$ is weakly increasing \cite[\S 3.4, Corollary 1]{Brion}. This implies in particular that 
$\K{\lambda}{\mu}{\nu} \leq \RK{\cut{\lambda}}{\cut{\mu}}{\cut{\nu}}$

It is shown in \cite[Theorem 1.5]{BOR-JA} that $\K{\lambda+(m)}{\mu+(m)}{\nu+(m)}=\RK{\cut{\lambda}}{\cut{\mu}}{\cut{\nu}}$ holds for all $m$ such that $|\lambda|+m \geq N_0(\alpha, \beta, \gamma)$, where
\begin{equation}\label{N0}
N_0(\alpha, \beta, \gamma)=\frac{|\alpha|+\alpha_1+|\beta|+\beta_1+|\gamma|+\gamma_1}{2}.
\end{equation}

 Moreover, Murnaghan showed that the reduced Kronecker coefficients are zero unless the following inequalities hold:
\begin{lemma}[Murnaghan's inequalities]  The reduced Kronecker coefficient  $\RK{{\lambda}}{{\mu}}{{\nu}}$ are zero unless the following three conditions hold:
\begin{align}\label{Murnaghan inq}
\left\lbrace
\begin{matrix}
|\lambda| \leq |\mu| + |\nu|\\
|\mu| \leq |\lambda| + |\nu|\\
|\nu| \leq |\lambda| +  |\mu| 
\end{matrix}
\right .
\end{align}

\end{lemma}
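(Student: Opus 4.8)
The plan is to reduce the statement to a vanishing criterion for ordinary Kronecker coefficients, and then to prove that criterion by peeling off the first row of one of the three partitions. Recall that $\RK{\lambda}{\mu}{\nu}$ equals $\K{\lambda[N]}{\mu[N]}{\nu[N]}$ for all large $N$, and that for such $N$ the sequence $\lambda[N]=(N-|\lambda|,\lambda)$ is a genuine partition of $N$ whose first part is exactly $N-|\lambda|$ (and similarly for $\mu[N]$ and $\nu[N]$). So it suffices to show: \emph{if $\alpha$, $\beta$, $\gamma$ are partitions of a common integer $N$ with $\K{\alpha}{\beta}{\gamma}\neq 0$, then $N-\gamma_1\le(N-\alpha_1)+(N-\beta_1)$.} Applied to $\alpha=\lambda[N]$, $\beta=\mu[N]$, $\gamma=\nu[N]$ this reads $|\nu|\le|\lambda|+|\mu|$; so if that inequality fails, $\K{\lambda[N]}{\mu[N]}{\nu[N]}=0$ for all large $N$ and $\RK{\lambda}{\mu}{\nu}=0$. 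The two remaining Murnaghan inequalities then follow by the $S_3$-symmetry of the Kronecker (hence of the reduced Kronecker) coefficients.

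To prove the displayed criterion I would combine Pieri's rule with the internal product. Pieri gives $h_{\beta_1}\,s_{\cut{\beta}}=s_{\beta}+(\text{a Schur-positive sum})$, because $\beta/\cut{\beta}$ is a horizontal strip of length $\beta_1$. Since the internal (Kronecker) product $\ast$ has nonnegative coefficients in the Schur basis, $s_{\alpha}\ast\bigl(h_{\beta_1}s_{\cut{\beta}}-s_{\beta}\bigr)$ is Schur-positive, whence
\[
\K{\alpha}{\beta}{\gamma}=\scalar{s_{\alpha}\ast s_{\beta}}{s_{\gamma}}\;\le\;\scalar{s_{\alpha}\ast(h_{\beta_1}s_{\cut{\beta}})}{s_{\gamma}}.
\]
Next I would expand the right-hand side using the standard compatibility of $\ast$ with the coproduct $\Delta s_{\alpha}=\sum_{\delta,\epsilon}c_{\alpha,\delta,\epsilon}\,s_{\delta}\otimes s_{\epsilon}$ together with $h_n\ast s_{\delta}=s_{\delta}$ when $|\delta|=n$ (and $0$ otherwise, for degree reasons), which yields
\[
s_{\alpha}\ast(h_{\beta_1}s_{\cut{\beta}})=\sum_{|\delta|=\beta_1}\;\sum_{\epsilon}c_{\alpha,\delta,\epsilon}\;s_{\delta}\cdot\bigl(s_{\epsilon}\ast s_{\cut{\beta}}\bigr),
\]
where necessarily $|\epsilon|=N-\beta_1$ in every surviving term. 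Expanding $s_{\epsilon}\ast s_{\cut{\beta}}=\sum_{\pi}\K{\epsilon}{\cut{\beta}}{\pi}s_{\pi}$ and the products $s_{\delta}s_{\pi}$ in the Schur basis, and pairing with $s_{\gamma}$, one gets $\scalar{s_{\alpha}\ast(h_{\beta_1}s_{\cut{\beta}})}{s_{\gamma}}=\sum c_{\alpha,\delta,\epsilon}\,\K{\epsilon}{\cut{\beta}}{\pi}\,c_{\gamma,\delta,\pi}$, summed over $\delta,\epsilon,\pi$ with $|\delta|=\beta_1$ and $|\epsilon|=|\pi|=N-\beta_1$.

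If that sum is nonzero, fix a summand in which all three coefficients are positive. From $c_{\alpha,\delta,\epsilon}=\scalar{s_{\alpha}}{s_{\delta}s_{\epsilon}}>0$ we get $\delta\subseteq\alpha$ with $|\alpha/\delta|=|\epsilon|=N-\beta_1$, so $\delta$ is obtained from $\alpha$ by deleting $N-\beta_1$ cells and hence $\delta_1\ge\alpha_1-(N-\beta_1)$; from $c_{\gamma,\delta,\pi}>0$ we get $\delta\subseteq\gamma$, so $\gamma_1\ge\delta_1$. Therefore $\gamma_1\ge\alpha_1+\beta_1-N$, i.e.\ $N-\gamma_1\le(N-\alpha_1)+(N-\beta_1)$, as wanted. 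There is no real obstacle here; the one delicate point is the middle expansion, where one must track multiplicities correctly and observe that it is the first part of the common sub-partition $\delta\subseteq\alpha,\gamma$ that forces the bound — everything else is Pieri's rule and the Littlewood--Richardson rule. As an alternative, once the reduced Kronecker coefficients are known to have Schur generating series $\sigma[XY+XZ+YZ+XYZ]$, the inequalities become transparent: this series is a product of ``bricks'' $p_k[XY]$, $p_k[XZ]$, $p_k[YZ]$, $p_k[XYZ]$ whose $(\deg_X,\deg_Y,\deg_Z)$-multidegrees $(k,k,0)$, $(k,0,k)$, $(0,k,k)$, $(k,k,k)$ all lie in the additively closed cone $\{(u,v,w):u\le v+w,\ v\le u+w,\ w\le u+v\}$, while $s_{\lambda}[X]s_{\mu}[Y]s_{\nu}[Z]$ has multidegree $(|\lambda|,|\mu|,|\nu|)$.
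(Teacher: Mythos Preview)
The paper does not actually prove this lemma; it is stated as a classical result attributed to Murnaghan, with no proof given. Your proposal is therefore not competing against a proof in the paper but supplying one, and both arguments you sketch are correct.

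Your first argument is sound. The key identity you use,
\[
s_{\alpha}\ast(h_{\beta_1}s_{\cut{\beta}})=\sum_{\delta,\epsilon}c_{\alpha,\delta,\epsilon}\,(s_{\delta}\ast h_{\beta_1})\,(s_{\epsilon}\ast s_{\cut{\beta}}),
\]
is Littlewood's formula expressing the compatibility of the internal product with the coproduct (Macdonald, I.\S7). Together with $s_{\delta}\ast h_{n}=s_{\delta}$ for $|\delta|=n$, it gives exactly the decomposition you write. From a nonvanishing summand, $c_{\alpha,\delta,\epsilon}>0$ forces $\delta\subseteq\alpha$ with $|\alpha\setminus\delta|=N-\beta_1$, hence $\alpha_1-\delta_1\le N-\beta_1$; and $c_{\gamma,\delta,\pi}>0$ forces $\delta\subseteq\gamma$, hence $\delta_1\le\gamma_1$. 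Combining gives $\gamma_1\ge\alpha_1+\beta_1-N$, which is the desired inequality. The $S_3$-symmetry then finishes the job.

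Your alternative via the generating series is in fact the argument most in tune with the paper. Once \eqref{symmetric Brion formula} is established (and the paper does prove it), the inequalities are immediate: every monomial of $XY+XZ+YZ+XYZ$ has tri-degree in the cone $\mathcal{C}$ defined by \eqref{cone inequalities}, $\mathcal{C}$ is closed under addition, and Schur functions are homogeneous, so $\RK{\lambda}{\mu}{\nu}$ can be nonzero only when $(|\lambda|,|\mu|,|\nu|)\in\mathcal{C}$. This is the shorter route here; your first argument has the virtue of proving the sharper statement $|\cut{\gamma}|\le|\cut{\alpha}|+|\cut{\beta}|$ for \emph{ordinary} Kronecker coefficients, independent of any stability.
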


\subsection{Brion's formula and the generating series for the Reduced Kronecker Coefficients.}
%% Brion formula
In \cite[\S 3.4, Corollary 1]{Brion}, M. Brion obtained the following formula 
 for the reduced Kronecker coefficients.
\begin{proposition}\label{P:Brion Formula}
For any three partitions $\alpha$, $\beta$ and $\gamma$,
\begin{equation}\label{stable:1}
\RK{\alpha}{\beta}{\gamma}=\scalar{s_{\alpha}[X] s_{\beta}[Y]}{\sigma[XY]s_{\gamma}[XY+X+Y]}_{X,Y}.
\end{equation}
\end{proposition}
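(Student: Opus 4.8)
The plan is to derive Brion's formula from the definition of reduced Kronecker coefficients as limits of ordinary Kronecker coefficients, combined with the $\lambda$-ring encoding \eqref{def:kron} of the Kronecker coefficients and the vertex operator formula \eqref{V2}. First I would recall that $g_{\cut\lambda[N],\cut\mu[N],\cut\nu[N]}$, for $N$ large, stabilizes to $\RK{\cut\lambda}{\cut\mu}{\cut\nu}$, and that by \eqref{def:kron} these Kronecker coefficients are the coefficients of $s_{\cut\mu[N]}[X]s_{\cut\nu[N]}[Y]$ in the expansion of $s_{\cut\lambda[N]}[XY]$ in the Schur basis; equivalently, $g_{\cut\lambda[N],\cut\mu[N],\cut\nu[N]}=\scalar{s_{\cut\mu[N]}[X]s_{\cut\nu[N]}[Y]}{s_{\cut\lambda[N]}[XY]}_{X,Y}$. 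The idea is then to multiply by a bookkeeping variable $t^N$, sum over $N$, and use the vertex operator identity \eqref{V2} to produce the generating series $\sum_N s_{\cut\lambda[N]}[\cdot]\,t^N = \sigma[t\,\cdot\,]\,s_{\cut\lambda}[t\,\cdot\,-1]$ in each of the three alphabet slots $X$, $Y$, and $XY$ — noting that $(N-|\cut\lambda|,\cut\lambda) = \cut\lambda[N]$ is exactly the shape appearing in \eqref{V2} with $\alpha = \cut\lambda$.

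The key computational step is to set the alphabet in \eqref{V2} equal to the product alphabet $XY$ for the $\lambda$-slot, and to $X$ (resp.\ $Y$) for the $\mu$-slot (resp.\ $\nu$-slot), all with the \emph{same} variable $t$, and then extract the coefficient of $t^0$ after dividing by the appropriate power of $t$; the stabilization statement is precisely what guarantees that this coefficient equals the limit $\RK{\cut\lambda}{\cut\mu}{\cut\nu}$ rather than some unstable term. Concretely, $\sum_N g_{\cut\lambda[N],\cut\mu[N],\cut\nu[N]}$ is, after the vertex-operator substitution, the scalar product $\scalar{\big(t^{|\cut\mu|}\vertexop{t}{X}s_{\cut\mu}[X]\big)\big(t^{|\cut\nu|}\vertexop{t}{Y}s_{\cut\nu}[Y]\big)}{t^{|\cut\lambda|}\vertexop{t}{XY}s_{\cut\lambda}[XY]}_{X,Y}$ expanded in $t$, and matching the $t^N$ coefficients on both sides; taking $N\to\infty$ (equivalently, the constant term after suitable renormalization) collapses the three vertex operators. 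The reproducing kernel property (Lemma \ref{L1}(5)) and the factorization \eqref{factorizationGamma} let one rewrite $\sigma[t\,XY]$-type factors; the substitution $s_{\cut\gamma}[tXY - 1]$ evaluated in the limit $t\to 1$ should become $s_\gamma$ applied to $XY+X+Y$ once the $\sigma[-\cdot]$ factors coming from the "$-1$" and the reproducing kernels are absorbed, since $\sigma[tX]\sigma[tY]\sigma[tXY]$ combined with the kernel in $X,Y$ produces the shifted alphabet $XY+X+Y$.

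The main obstacle I anticipate is handling the interchange of the limit $N\to\infty$ with the scalar product and making rigorous the manipulation of the formal series in $t$: the individual sums $\sum_N s_{\cut\lambda[N]}[XY]\,t^N$ are genuine formal series (the vertex operator sum runs over $n\in\ZZ$, and for $n$ below $\cut\lambda_1$ the Jacobi--Trudi determinant may be zero or a signed Schur function), so one must argue that only the stable terms survive in the relevant degree and that the "error" terms contribute nothing to the constant term. The cleanest route is probably to fix the degree: $g_{\cut\lambda[N],\cut\mu[N],\cut\nu[N]}$ is a scalar product of homogeneous pieces of controlled degree, so for each fixed total degree only finitely many $N$ contribute, and the generating-series identity can be checked degree by degree. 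Once that bookkeeping is in place, the collapse of $\sigma[tX]\sigma[tY]\sigma[tXY]s_{\cut\gamma}[tXY-1]$ at $t=1$ to $\sigma[XY]\,s_\gamma[XY+X+Y]$ is a routine $\lambda$-ring calculation using Lemma \ref{L1}(3),(4),(5), and pairing against $s_\alpha[X]s_\beta[Y]$ via the reproducing kernel gives exactly \eqref{stable:1}.
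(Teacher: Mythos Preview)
Your overall strategy---encode the Kronecker coefficients via \eqref{def:kron}, form a generating series in $t$ using the vertex operator identity \eqref{V2}, and simplify with the adjoint/reproducing-kernel properties of Lemma~\ref{L1}---is exactly the paper's approach. The missing piece is the mechanism for extracting the stable value from the generating series. Phrases like ``extract the coefficient of $t^0$ after dividing by the appropriate power of $t$'' and ``the constant term after suitable renormalization'' do not produce the limit of an eventually constant sequence: if $u_n$ stabilizes, then $\sum_n u_n t^n$ has no constant term of interest and diverges at $t=1$. The paper's device is Lemma~\ref{lemma:stable}: one shows the generating series has the form $P(t)/(1-t)$ with $P$ a \emph{polynomial}, and then the stable value is $P(1)$. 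Concretely, after moving $\sigma[X]$ and $\sigma[Y]$ across by adjoints, the factor $\sigma[t(X+1)(Y+1)]$ splits off $\sigma[t]=1/(1-t)$, and the remaining scalar product is shown to be polynomial in $t$ by a degree-truncation argument on $\sigma[tXY]$. Only then is the substitution $t=1$ legitimate, and that substitution yields $\langle \sigma[XY]\, s_{\gamma}[(X+1)(Y+1)-1] \mid s_{\alpha}[X] s_{\beta}[Y]\rangle$, which is \eqref{stable:1}. This step simultaneously \emph{proves} Murnaghan stability rather than assuming it.

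Two smaller points. First, the paper does not put $t$ on all three slots: it uses the degree-matching observation that $\langle s_{\gamma[n]}[XY]\mid s_{\alpha[a]}[X]s_{\beta[b]}[Y]\rangle$ vanishes unless $a=b=n$ to replace $s_{\alpha[n]}[X]$ by the full sum $\sum_a s_{\alpha[a]}[X]=\sigma[X]s_{\alpha}[X-1]$ (i.e.\ \eqref{V1} at $t=1$), and likewise for $\beta$; then only the $\gamma$-slot carries $t$. Your three-$t$ variant can be made to work (you would factor $\sigma[t^3]$ instead of $\sigma[t]$), but your write-up does not isolate this factor, and without it ``setting $t=1$'' is not justified. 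Second, your statement that ``$\sigma[tX]\sigma[tY]\sigma[tXY]s_{\gamma}[tXY-1]$ at $t=1$'' collapses to $\sigma[XY]s_{\gamma}[XY+X+Y]$ is not literally correct: at $t=1$ that expression is $\sigma[X]\sigma[Y]\sigma[XY]s_{\gamma}[XY-1]$, and the shift $XY\mapsto XY+X+Y$ in the argument of $s_{\gamma}$ arises only after the adjoints of $\sigma[X]$ and $\sigma[Y]$ are moved to the other side (sending $X\mapsto X+1$, $Y\mapsto Y+1$ in the left-hand factor). You gesture at this with the reference to Lemma~\ref{L1}(4), but the sentence as written conflates two distinct steps.
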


 We include an elementary  proof of Brion's formula (and, at the same time, of
 Murnaghan's stability), based on the following elementary lemma.

\begin{lemma}\label{lemma:stable}
A sequence with general term $u_n$ stabilizes (is eventually constant) if and only if its generating series $g(t)=\sum_n u_n t^n$ takes the form $P(t)/(1-t)$ with $P(t)$ a polynomial. 
Moreover, the stable value of the sequence is $P(1)$. 
\end{lemma}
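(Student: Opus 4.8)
The plan is to prove both directions by elementary manipulation of formal power series, treating $g(t) = \sum_{n} u_n t^n$ as an element of $\QQ[[t]]$ (with $n$ ranging over $\NN$, say). The key observation driving everything is that multiplication by $(1-t)$ acts as a "discrete difference" on coefficients: if $g(t) = \sum_n u_n t^n$, then $(1-t) g(t) = \sum_n (u_n - u_{n-1}) t^n$, with the convention $u_{-1} = 0$. So the statement that the sequence $(u_n)$ is eventually constant is precisely the statement that the coefficient sequence of $(1-t)g(t)$ is eventually zero, i.e.\ that $(1-t)g(t)$ is a polynomial.

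For the forward direction, suppose $(u_n)$ stabilizes, say $u_n = c$ for all $n \geq N$. Then I would write $P(t) := (1-t) g(t)$ and compute, as above, that the coefficient of $t^n$ in $P(t)$ is $u_n - u_{n-1}$, which vanishes for $n \geq N+1$; hence $P(t)$ is a polynomial of degree at most $N$, and $g(t) = P(t)/(1-t)$. For the converse, suppose $g(t) = P(t)/(1-t)$ with $P(t) = \sum_{k=0}^{d} a_k t^k$ a polynomial. Then $(1-t) g(t) = P(t)$, so reading off coefficients gives $u_n - u_{n-1} = a_n$ for all $n$ (with $a_n = 0$ for $n > d$), and therefore $u_n = u_{n-1}$ for all $n > d$; thus the sequence is constant from index $d$ onward. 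Finally, for the value of the limit: from $u_n = u_{n-1}$ for $n > d$ we get, telescoping, that the stable value equals $u_d = \sum_{k=0}^{d} a_k = P(1)$; equivalently, one can simply substitute $t=1$ in the identity $P(t) = (1-t) g(t)$ after noting that the stable value is what one extracts, but the cleanest route is the telescoping sum $u_d = \sum_{n=0}^{d}(u_n - u_{n-1}) = \sum_{n=0}^{d} a_n = P(1)$.

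There is essentially no serious obstacle here; the only point requiring a word of care is the bookkeeping of the boundary term $u_{-1}$ (equivalently, whether the indexing starts at $0$ or at some other integer) and the remark that $(1-t)$ is invertible in $\QQ[[t]]$, so that the equation $g(t) = P(t)/(1-t)$ is equivalent to $(1-t)g(t) = P(t)$ as formal power series — this is where one uses $\sigma[t] = 1/(1-t)$ from Lemma \ref{L1}(1) if one wants to phrase it in the paper's language, though it is really just the geometric series $1/(1-t) = \sum_{m \geq 0} t^m$. I would state the argument for $n$ ranging over $\NN$ and note that shifting the index does not affect either the ``polynomial times $1/(1-t)$'' form or the value $P(1)$.
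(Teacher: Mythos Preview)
Your proof is correct. Note that the paper does not actually prove Lemma \ref{lemma:stable}: it is introduced as ``the following elementary lemma'' and immediately used in the proof of Proposition \ref{P:Brion Formula} without further justification. So there is no paper proof to compare against, and your argument via the discrete-difference identity $(1-t)g(t) = \sum_n (u_n - u_{n-1}) t^n$ together with the telescoping sum for $P(1)$ is exactly the kind of elementary verification the authors had in mind.
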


\begin{proof}[Proof of Proposition \ref{P:Brion Formula}]

All scalar products appearing in this proof will be taken with respect to $X,Y$,
as in the statement of the proposition.

Let $\alpha$, $\beta$ and $\gamma$ be three partitions.  We have, for $n$ big enough,
\[
\K{ \alpha[n]}{\beta[n]}{\gamma[n]}=\scalar{s_{\gamma[n]}[XY]}{s_{ \alpha[n]}[X]s_{\beta[n]}[Y]}.
\]
We can write as well
\[
\K{\alpha[n]}{\beta[n]}{\gamma[n]}=\scalar{s_{\gamma[n]}[XY]}{\sum_{a} s_{\alpha[a]}[X] \cdot \sum_b s_{\beta[b]}[Y]}.
\]
Indeed, the extra terms in the right--hand side of the scalar product do not contribute since they do not the same degree as the left--hand side. We simplify the series on the right--hand side using \eqref{V1}, to get
\[
\K{\alpha[n]}{\beta[n]}{\gamma[n]}=\scalar{s_{\gamma[n]}[XY]}{\sigma[X] s_{\alpha}[X-1] \cdot \sigma[Y] s_{\beta}[Y-1]}.
\]
Let us introduce the generating series
\begin{align*}
g(t)
&=\sum_n \scalar{s_{\gamma[n]}[XY]}{\sigma[X] s_{\alpha}[X-1]  \sigma[Y] s_{\beta}[Y-1]} t^n\\
&=\scalar{\sum_n s_{\gamma[n]}[XY] t^n}{\sigma[X] s_{\alpha}[X-1] \sigma[Y] s_{\beta}[Y-1]}.
\end{align*}
From \eqref{V2}, with $XY$ instead of $X$, we have
\[
g(t)=\scalar{\sigma[tXY] s_{\gamma}[tXY-1] }{\sigma[X] s_{\alpha}[X-1] \cdot \sigma[Y] s_{\beta}[Y-1]}
\]
Using the adjoints of $\sigma[X]$ and $\sigma[Y]$ (see Lemma \ref{L1}), we get 
\begin{align*}
g(t)=\scalar{\sigma[t(X+1)(Y+1)] s_{\gamma}[t(X+1)(Y+1)-1] }{s_{\alpha}[X-1]  s_{\beta}[Y-1]}\\
%&=\scalar{\sigma[tXY+tX+tY+t] s_{\gamma}[t(X+1)(Y+1)-1] }{s_{\alpha}[X-1] \cdot s_{\beta}[Y-1]}\\
%&=\scalar{\sigma[tXY]\sigma[tX]\sigma[tY]\sigma[t] s_{\gamma}[t(X+1)(Y+1)-1] }{s_{\alpha}[X-1] \cdot s_{\beta}[Y-1]}\\
=\sigma[t] \scalar{\sigma[tXY]\sigma[tX]\sigma[tY] s_{\gamma}[t(X+1)(Y+1)-1] }{s_{\alpha}[X-1] s_{\beta}[Y-1]}\\
=\sigma[t] \scalar{\sigma[tXY] s_{\gamma}[t(X+1)(Y+1)-1] }{s_{\alpha}[X+t-1] s_{\beta}[Y+t-1]}.
\end{align*}
(Note that we specialized $\sigma[AX^\perp]$ to $A=1$.)
That is, $g(t)= \frac{1}{1-t} P(t)$, with $P(t)$ equal to
\[ P(t)=\scalar{\sigma[tXY] s_{\gamma}[t(X+1)(Y+1)-1] }{s_{\alpha}[X+t-1]  s_{\beta}[Y+t-1]}.\]
We expand $\sigma[tXY]=\sum_{k=0}^{\infty} h_k[XY] t^k$, and observe that the terms $h_k[XY] t^k$, for $k$ big enough, do not contribute to the scalar product. Indeed, they are homogeneous of total degree $2k$ in $X$ and $Y$, while the right--hand side has degree $|\alpha|+|\beta|$.
The infinite series can thus be truncated, and,  $P(t)$ is equal to
\[
\scalar{\sum_{k=0}^{k_0} h_k[XY] t^k s_{\gamma}[t(X+1)(Y+1)-1] }{s_{\alpha}[X+t-1] \cdot s_{\beta}[Y+t-1]}.\] 
Under this form, it is manifest that $P(t)$ is a polynomial in $t$.

After Lemma \ref{lemma:stable}, the sequence of coefficients of $g(t)$ is eventually constant. This sequence of coefficients coincides with the sequence of Kronecker coefficients $\K{\alpha[n]}{\beta[n]}{\gamma[n]}$ for $n \gg 0$. This proves that this sequence of Kronecker coefficients is eventually constant. 
Finally, substituting $1$ for $t$ in the expression for $P(t)$ we get Brion's Formula.
\end{proof}

Brion's formula is equivalent to the following identity:
\[
\sigma[XY]s_\gamma[XY+X+Y]=   \sum_{\alpha,\beta}\RK{\alpha}{\beta}{\gamma}s_{\alpha}[X]s_{\beta}[Y].
\]
Introducing a third alphabet $Z$, multiplying by $s_{\gamma}[Z]$, and summing over all partitions $\gamma$, yields the following analogue of Cauchy's formula for reduced Kronecker coefficients, which makes manifest the symmetry in the three indexing partitions:
\begin{equation}\label{symmetric Brion formula}
\sum_{\alpha,\beta,\gamma}\RK{\alpha}{\beta}{\gamma}s_{\alpha}[X]s_{\beta}[Y]s_{\gamma}[Z]
= \sigma[XYZ+XY+XZ+YZ] .
\end{equation}

Conversely, Brion's Formula is obtained from \eqref{symmetric Brion formula} by taking the scalar product with $s_{\gamma}[Z]$ and making use of the reproducing kernel property for $\sigma[AZ]$ with $A=XY+X+Y$.

%%%%%%%%%%%%%%%%%%%%%%%%%%%%%%%%%%%%%%%%%%%%%%%%%%%%%%%%%%%%%%%%%%%%%%%%%%%%%%%
%%%%%%%%%%%%%%%%%%%%%%%%%%%%%%%%%%%%%%%%%%%%%%%%%%%%%%%%%%%%%%%%%%%%%%%%%%%%%%%

\subsection{Reduced Kronecker coefficients indexed by three one--row shapes or three one--column shape}

Let $x$, $y$ and $z$ be three variables. By specializing, in the generating series
for the reduced Kronecker coefficients \eqref{symmetric Brion formula}, the
alphabets $X$, $Y$ and $Z$ to $x$, $y$, and $z$, we get that
\[\sigma[xyz+xy+xz+yz] = \sum_{(a,b,c) \in \NN^3} \RK{(a)}{(b)}{(c)} x^a y^b
z^c,\]
which is the ordinary generating function for the reduced Kronecker coefficients indexed by three one--row shapes.

Similarly, we get the generating function for the reduced Kronecker coefficients indexed by three one--column shapes by specializing, in  \eqref{symmetric Brion formula}, the alphabets $X$, $Y$ and $Z$ to $-\varepsilon x$, $-\varepsilon  y$, and $-\varepsilon  z$:
\[
\sigma[-\varepsilon xyz+xy+xz+yz] = 
\sum_{(a,b,c) \in \NN^3} \RK{(1^a)}{(1^b)}{(1^c)} x^a y^b z^c.
\]

From the properties of the series $\sigma$, one gets straightforwardly the following simple expressions for the generating series:
\[
\sigma[-\varepsilon xyz+xy+xz+yz] =  \frac{1+xyz}{(1-xy)(1-xz)(1-yz)}
\]
and 
\[
\sigma[xyz+xy+xz+yz] =  \frac{1}{(1-xyz)(1-xy)(1-xz)(1-yz)},
\]
which is, as an aside,
\[
\frac{1}{1-(xyz)^2} \cdot \sigma[-\varepsilon xyz+xy+xz+yz].
\]

\begin{proposition}\label{files and columns}
The above generating series admit the following expansions:
\begin{equation}
\sigma[-\varepsilon xyz + xy + xz + yz]=
 \sum_{(a,b,c) \in \mathcal{C} \cap \NN^3} x^a y^b z^c  \label{gseries col expanded}
\end{equation}
and 
\begin{equation}
\sigma[xyz + xy + xz + yz]=
 \sum_{(a,b,c) \in \mathcal{C} \cap \NN^3} \left(1+\left[ \frac{\min \{\ell_1,\ell_2,\ell_3\}}{2} \right]\right) x^a y^b z^c \label{gseries row expanded}
\end{equation}
where $\ell_i=\ell_i(a,b,c)$ with
\begin{equation}\label{elli}
%braceamps
%\left\lbrace
%\begin{matrix}
%\ell_1(a,b,c) = b+c-a ,\\ 
%\ell_2(a,b,c) = a+c-b ,\\ 
%\ell_3(a,b,c) = a+b-c\\ 
%\end{matrix}
%\right.
\begin{matrix}
\ell_1(a,b,c) = b+c-a ,\\ 
\ell_2(a,b,c) = a+c-b ,\\ 
\ell_3(a,b,c) = a+b-c\\ 
\end{matrix}
\end{equation}
and $\mathcal{C}$ is the cone in $\RR^3$ with coordinates $a$, $b$, $c$, defined by  
\begin{equation}\label{cone inequalities}
%braceamps
%\left\lbrace
%\begin{matrix}
%\ell_1 \geq 0,\\ 
%\ell_2 \geq 0,\\
%\ell_3 \geq 0.
%\end{matrix}
%\right.
\begin{matrix}
\ell_1 \geq 0,\\ 
\ell_2 \geq 0,\\
\ell_3 \geq 0.
\end{matrix}
\end{equation}
\end{proposition}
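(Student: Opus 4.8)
The plan is to expand the two closed-form rational functions recorded just above the statement, namely $\sigma[-\varepsilon xyz+xy+xz+yz]=\dfrac{1+xyz}{(1-xy)(1-xz)(1-yz)}$ and $\sigma[xyz+xy+xz+yz]=\dfrac{1}{(1-xyz)(1-xy)(1-xz)(1-yz)}$, as explicit power series and to read off their coefficients by counting nonnegative integer solutions of a linear system.

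For \eqref{gseries col expanded} I would first expand $\dfrac{1}{(1-xy)(1-xz)(1-yz)}=\sum_{i,j,k\ge0}(xy)^i(xz)^j(yz)^k$, so that the coefficient of $x^ay^bz^c$ is the number of triples $(i,j,k)\in\NN^3$ with $i+j=a$, $i+k=b$, $j+k=c$. This system has the unique rational solution $i=\ell_3/2$, $j=\ell_2/2$, $k=\ell_1/2$, which lies in $\NN^3$ precisely when $\ell_1,\ell_2,\ell_3$ are nonnegative and even; and since $\ell_i\equiv a+b+c\pmod 2$ for each $i$, this happens exactly when $(a,b,c)\in\mathcal{C}\cap\NN^3$ with $a+b+c$ even. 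Next, the coefficient of $x^ay^bz^c$ in $\dfrac{xyz}{(1-xy)(1-xz)(1-yz)}$ equals the coefficient of $x^{a-1}y^{b-1}z^{c-1}$ in the previous series; using $\ell_i(a-1,b-1,c-1)=\ell_i-1$ and the fact that a nonnegative odd integer is already $\ge1$, this coefficient is $1$ exactly when $(a,b,c)\in\mathcal{C}\cap\NN^3$ with $a+b+c$ odd (here $a,b,c\ge1$ is automatic, because $\min\{\ell_1,\ell_2,\ell_3\}\le\min\{a,b,c\}$). Adding these two contributions, which are supported on complementary parity classes inside the cone, leaves the coefficient equal to $1$ on all of $\mathcal{C}\cap\NN^3$ and $0$ elsewhere, which is \eqref{gseries col expanded}.

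For \eqref{gseries row expanded} I would invoke the identity $\sigma[xyz+xy+xz+yz]=\dfrac{1}{1-(xyz)^2}\,\sigma[-\varepsilon xyz+xy+xz+yz]$ already noted in the text, expand $\dfrac{1}{1-(xyz)^2}=\sum_{m\ge0}(xyz)^{2m}$, and multiply by the series in \eqref{gseries col expanded} just established. The coefficient of $x^ay^bz^c$ is then the number of $m\ge0$ with $(a-2m,b-2m,c-2m)\in\mathcal{C}\cap\NN^3$, that is, the number of $m\ge0$ with $2m\le\min\{\ell_1,\ell_2,\ell_3\}$ (the constraints $a-2m,b-2m,c-2m\ge0$ being weaker, again because $\min\{\ell_1,\ell_2,\ell_3\}\le\min\{a,b,c\}$, which follows from $\ell_2+\ell_3=2a$, $\ell_1+\ell_3=2b$, $\ell_1+\ell_2=2c$). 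This number is $1+\bigl\lfloor\tfrac12\min\{\ell_1,\ell_2,\ell_3\}\bigr\rfloor$ when $(a,b,c)\in\mathcal{C}$ and $0$ otherwise, which is exactly the right-hand side (the bracket $[\cdot]$ in the statement being the integer part, unambiguous here since $\min\{\ell_1,\ell_2,\ell_3\}\ge0$ on $\mathcal{C}$).

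The whole argument is finite bookkeeping and presents no real obstacle; the two points that need attention are tracking the common parity of $a+b+c$ and the $\ell_i$, so that the unit shifts in the first part match up correctly, and establishing $\min\{\ell_1,\ell_2,\ell_3\}\le\min\{a,b,c\}$, which in both parts ensures that positivity of the shifted coordinates is governed by the cone inequalities \eqref{cone inequalities} alone.
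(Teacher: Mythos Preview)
Your argument is correct and follows essentially the same route as the paper: expand $\frac{1}{(1-xy)(1-xz)(1-yz)}$ and solve the linear system for the column case, then use the factor $\frac{1}{1-(xyz)^2}$ and count the admissible values of $m$ for the row case. You are in fact slightly more careful than the paper in checking that the nonnegativity constraints on the shifted coordinates are subsumed by the cone inequalities via $\min\{\ell_1,\ell_2,\ell_3\}\le\min\{a,b,c\}$.
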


\begin{remark}
The inequalities \eqref{cone inequalities}, defining the cone $\mathcal{C}$, are
precisely those described by Murnaghan's inequalities \eqref{Murnaghan inq}, as a non--vanishing condition for the reduced Kronecker coefficients. 

\end{remark}

\begin{remark}\label{ex_3cols}
The proposition amounts to explicit formulas for the reduced Kronecker coefficients indexed by three one--row shapes and three one--column shapes:
\begin{align*}
\RK{(1^a)}{(1^b)}{(1^c)} &=
\left\lbrace
\begin{matrix}
1 & \text{if $(a,b,c) \in \mathcal{C}$},\\
0 & \text{ otherwise.}
\end{matrix}
\right. \\
\RK{(a)}{(b)}{(c)} &=
\left\lbrace
\begin{matrix}
1+\left[ \frac{\min \{\ell_1,\ell_2,\ell_3\}}{2} \right] & \text{if $(a,b,c) \in \mathcal{C}$},\\
0 & \text{ otherwise.}
\end{matrix}
\right.
\end{align*}

They can be derived from \cite[Corollary 5]{Rosas} and \cite[Theorem 13]{Rosas}
respectively.  We give a different proof below.
\end{remark}

\begin{proof}
We first prove \eqref{gseries col expanded}. We begin with the expansion 
\[
\frac{1}{(1-xy)(1-xz)(1-yz)}=\sum_{(i,j,k)\in\NN^3} (xy)^i (xz)^j (yz)^k = \sum_{(i,j,k) \in \NN^3} x^{i+j} y^{i+k}z^{j+k}.
\]
We solve the following system in $i$, $j$, $k$, over the rational numbers:
\[
\begin{matrix}
a &=& i &+& j & & \\
b &=& i & &   &+& k \\
c &=&   & & j &+& k   
\end{matrix}
\]
It has a unique solution, $i=(a+b-c)/2$, $j=(a+c-b)/2$, $k=(b+c-a)/2$. This solution is a triple of nonnegative integers if and only if $a+b+c \equiv 0 \mod 2$, and the inequalities \eqref{cone inequalities} hold. Therefore,
\[
\frac{1}{(1-xy)(1-xz)(1-yz)}=\sum x^a y^b z^c,
\]
where the sum is over all $(a,b,c) \in \NN^3 \cap \mathcal{C}$ that satisfy $a+b+c \equiv 0 \mod 2$. 
As a consequence, we have also
\[
\frac{xyz}{(1-xy)(1-xz)(1-yz)}=\sum x^a y^b z^c
\]
where the sum is over all $(a,b,c) \in \NN^3 \cap \mathcal{C}$ that satisfy $a+b+c \equiv 1 \mod 2$. Formula \eqref{gseries col expanded} follows.

Let us prove now  \eqref{gseries row expanded}. We observe that 
\begin{align*}
\sigma[xyz+xy+xz+yz]
&=&\frac{1}{1-(xyz)^2} \sigma[-\varepsilon xyz+xy+xz+yz]\\
&=&\sum_{m \in \NN} x^{2m}y^{2m}z^{2m} \cdot \sum_{(i,j,k) \in \NN^3 \cap \mathcal{C} } x^i y^j z^k.  \end{align*}
Therefore, the coefficient $\RK{(a)}{(b)}{(c)}$ of $x^a y^b z^c$ in $\sigma[xyz+xy+xz+yz]$ is the number of integers $m \geq 0$ such that $(a,b,c)-2(m,m,m) \in \mathcal{C}$. It is obtained as the number of solutions $m \geq 0$ of
\[
 \forall i \in\{ 1,2,3\}, \quad \ell_i((a,b,c)-2(m,m,m)) \geq 0
\]
But $\ell_i((a,b,c)-2(m,m,m))=\ell_i(a,b,c)-2\ell_i(m,m,m)=\ell_i(a,b,c)-2 m$.
Therefore $\RK{(a)}{(b)}{(c)}$ is the number of integers $m \geq 0$ such that $m
\leq \ell_i(a,b,c)/2$ for all $i=1,2,3$. This is $1+[\min_i \ell_i/2]$, as
claimed.
\end{proof}

\section{A factorization.} \label{a-factorization}

Given an alphabet $X'$, define   $\vertexop{X'}{X}=\sigma[X' X] \sigma\left[-\frac{1}{X'} X^{\perp}\right]$. This allows us to consider the two vertex operators we are working with in this paper simultaneously, as for $X'=t$ we recover the standard vertex operator, and for  $X'=-\epsilon t$ the vertex operator for columns.

Let $X$, $Y$, $Z$, $X'$, $Y'$ and $Z'$ be six independent alphabets. Let $F(X,Y,Z)=XYZ+XZ+YZ+XY$.
For any triple of partitions $\alpha$, $\beta$, $\gamma$,  let $\Phi_{\alpha,\beta,\gamma}$ be the series 
\begin{equation}\label{def Phi}
\Phi_{\alpha,\beta,\gamma}=
\scalar{\sigma[F(X,Y,Z)]}{\vertexop{X'}{X} s_{\alpha}[X] \vertexop{Y'}{Y} s_{\beta}[Y] \vertexop{Z'}{Z} s_{\gamma}[Z]}_{X,Y,Z}
\end{equation}
Then $\Phi_{\alpha,\beta,\gamma}$ is a symmetric series in each of the sets of
variables $X'$, $Y'$ and $Z'$.   Lemma \ref{main lemma} below decribes this
symmetric series in more detail.

\begin{lemma}\label{main lemma}
For any partitions $\alpha$, $\beta$ and $\gamma$, 
there exists a symmetric function $Q_{\alpha,\beta,\gamma}$ (in the alphabets $X'$, $Y'$ and $Z'$) such that 
\[
\Phi_{\alpha,\beta,\gamma}=\sigma[X'Y'Z'+X'Y'+X'Z'+Y'Z'] \cdot Q_{\alpha,\beta,\gamma}.
\]
The symmetric function $Q_{\alpha,\beta,\gamma}$  is the coefficient of $s_{\alpha}[X] s_{\beta}[Y] s_{\gamma}[Z]$ in the expansion in the Schur basis of $\sigma[H]$ (as a symmetric series in $X$, $Y$ and $Z$), where
\begin{multline*}
H =(X+X')(Y+Y')(Z+Z')+(X+X')(Y+Y')+(X+X')(Z+Z')\\+(Y+Y')(Z+Z')
-(X'Y'Z'+X'Y'+X'Z'+Y'Z') \\-{X}/{X'}-{Y}/{Y'}-{Z}/{Z'}.
\end{multline*}
\end{lemma}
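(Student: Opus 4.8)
The plan is to start from the definition \eqref{def Phi} of $\Phi_{\alpha,\beta,\gamma}$ and push all three vertex operators to the left of the scalar product, onto the kernel $\sigma[F(X,Y,Z)]$, using the factorization $\vertexop{X'}{X}=\sigma[X'X]\sigma[-\tfrac{1}{X'}X^\perp]$ together with the adjunction formula of Lemma~\ref{L1}(4), namely $\sigma[AX^\perp]f[X]=f[X+A]$, and the reproducing-kernel property. Concretely, for a single alphabet the identity
\[
\scalar{g[X]}{\vertexop{X'}{X} f[X]}_X
=\scalar{g[X]}{\sigma[X'X]\,f\!\left[X-\tfrac{1}{X'}\right]}_X
\]
can be rewritten, moving the multiplication operator $\sigma[X'X]$ and the translation $X\mapsto X-1/X'$ to the other side of $\scalar{}{}_X$, since the adjoint of multiplication by $\sigma[X'X]$ is the shift $X\mapsto X+X'$, and the adjoint of the shift $f[X]\mapsto f[X-1/X']$ is multiplication by $\sigma[-\tfrac{1}{X'}X]$. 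First I would make this bookkeeping precise for each of the three variables in turn, so that
\[
\Phi_{\alpha,\beta,\gamma}
=\scalar{\sigma\big[-\tfrac{X}{X'}-\tfrac{Y}{Y'}-\tfrac{Z}{Z'}\big]\,\sigma\big[F(X+X',\,Y+Y',\,Z+Z')\big]}{s_\alpha[X]s_\beta[Y]s_\gamma[Z]}_{X,Y,Z},
\]
where I have used $\sigma[A+B]=\sigma[A]\sigma[B]$ to merge the translated kernel with the three factors $\sigma[-X/X']$, etc., produced by the adjoints.

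Next I would expand $F(X+X',Y+Y',Z+Z')$ and isolate the purely primed part $F(X',Y',Z')=X'Y'Z'+X'Y'+X'Z'+Y'Z'$, which depends only on $X',Y',Z'$ and hence factors out of the $X,Y,Z$ scalar product as $\sigma[X'Y'Z'+X'Y'+X'Z'+Y'Z']$. What remains inside the bracket is exactly $\sigma[H]$ with $H$ as in the statement: $H$ is precisely
\[
F(X+X',Y+Y',Z+Z') - F(X',Y',Z') - \tfrac{X}{X'}-\tfrac{Y}{Y'}-\tfrac{Z}{Z'},
\]
which one reads off by writing $F(X+X',Y+Y',Z+Z')=(X+X')(Y+Y')(Z+Z')+(X+X')(Y+Y')+(X+X')(Z+Z')+(Y+Y')(Z+Z')$. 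Thus $\Phi_{\alpha,\beta,\gamma}=\sigma[X'Y'Z'+X'Y'+X'Z'+Y'Z']\cdot Q_{\alpha,\beta,\gamma}$ with
\[
Q_{\alpha,\beta,\gamma}=\scalar{\sigma[H]}{s_\alpha[X]s_\beta[Y]s_\gamma[Z]}_{X,Y,Z}.
\]
Finally, by the reproducing-kernel property (Lemma~\ref{L1}(5)) applied in the $X,Y,Z$ variables, this scalar product is the coefficient of $s_\alpha[X]s_\beta[Y]s_\gamma[Z]$ in the expansion of $\sigma[H]$ in the Schur basis in $X$, $Y$, $Z$ — which is the asserted description of $Q_{\alpha,\beta,\gamma}$. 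Its symmetry in each of $X'$, $Y'$, $Z'$ separately is then inherited from the fact that $\sigma[H]$ is symmetric in each alphabet.

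The main obstacle I anticipate is purely a matter of making the adjunctions rigorous in the infinite-series setting: one must justify that the scalar product, extended as in Section~\ref{prelim} to formal series, commutes with moving $\sigma[X'X]$ (an infinite sum, but of operators that are multiplication by homogeneous pieces) and the $X\mapsto X-1/X'$ shift across $\scalar{}{}_X$, and that all intermediate expressions are well-defined formal series — finiteness is guaranteed because $s_\alpha[X]$, $s_\beta[Y]$, $s_\gamma[Z]$ are of bounded degree, so only finitely many terms of each $\sigma$ factor contribute, exactly as in the proof of Proposition~\ref{P:Brion Formula}. Besides that, the computation is a direct if slightly lengthy substitution; I would present the single-variable adjunction identity carefully once and then apply it three times, and expand $H$ in a displayed calculation. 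No genuinely hard step is expected beyond careful bookkeeping.
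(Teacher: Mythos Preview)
Your approach is essentially the same as the paper's: take adjoints of the three vertex operators to move them onto $\sigma[F(X,Y,Z)]$, obtain $\langle\sigma[G]\mid s_\alpha s_\beta s_\gamma\rangle$ with $G=F(X+X',Y+Y',Z+Z')-X/X'-Y/Y'-Z/Z'$, then split off the purely primed part $F(X',Y',Z')$ and factor out $\sigma[F(X',Y',Z')]$. That part is fine.

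However, you have omitted what the paper explicitly flags as ``the main point of this lemma'': showing that $Q_{\alpha,\beta,\gamma}$ is a symmetric \emph{function} in $X',Y',Z'$, i.e.\ that it has only finitely many nonzero homogeneous components in the primed alphabets. Your finiteness remark (``only finitely many terms of each $\sigma$ factor contribute'') addresses well-definedness of the scalar product in the $X,Y,Z$ variables, which is a different and easier issue. It does not by itself bound the degree of $Q$ in $X',Y',Z'$: a priori, the Schur coefficient of $\sigma[H]$ could still be an infinite series in the primed variables. The paper supplies the missing argument: every monomial of $H$ has total degree $\geq 1$ in $X,Y,Z$, so $h_k[H]$ has $(X,Y,Z)$-degree $\geq k$; hence only the terms with $k\le |\alpha|+|\beta|+|\gamma|$ survive the pairing with $s_\alpha[X]s_\beta[Y]s_\gamma[Z]$, and this finite truncation makes the bounded degree in $X',Y',Z'$ manifest. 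You should add this step.
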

The main point of this lemma is that  $Q_{\alpha,\beta,\gamma}$ is not just a symmetric series but a symmetric function;  it has finitely many non--zero homogeneous components.

\begin{proof} Fix three partitions $\alpha$, $\beta$ and $\gamma$. 

In \eqref{def Phi}, we move the $\vertexop{}{}$ to the left--hand side of the scalar product by taking adjoints. 
The adjoint of   $\vertexop{X'}{X}$ (with respect to the alphabet $X$) is the
operator $\sigma[-\frac{1}{X'} X] \sigma[X' X^{\perp}]$ that sends $f[X]$ to
$\sigma[-\frac{1}{X'} X] f[X+X']$ and, likewise, for $\vertexop{Y'}{Y}$ and
$\vertexop{Z'}{Z}$.
As a result, $\Phi_{\alpha,\beta,\gamma}=\scalar{\sigma[G]}{s_{\alpha}[X]s_{\beta}[Y]s_{\gamma}[Z]}$ with 
\[
G=F(X+X',Y+Y',Z+Z')-{X}/{X'}-{Y}/{Y'}-{Z}/{Z'}.
\]
Let us split $G$ as $G=F(X',Y',Z')+H$. That is, $H$ is obtained from $G$ by deleting all monomials that do not involve 
  $X$, $Y$ nor $Z$. Then $H$ is given by the formula in the lemma. 
 
  We have $\sigma[G]=\sigma[F(X',Y',Z')] \cdot \sigma[H]$ by Lemma \ref{L1}. Since $\sigma[F(X',Y',Z')]$ does not depend on $X$, $Y$ and $Z$, it can be factored out of the scalar product:
\[
\Phi_{\alpha,\beta,\gamma}=\sigma[F(X',Y',Z')]  \cdot \scalar{\sigma[H]}{s_{\alpha}[X] s_{\beta}[Y] s_{\gamma}[Z]}_{X,Y,Z}.
\] 
This gives the announced factorization, since the scalar product in the above formula is equal to $Q_{\alpha,\beta,\gamma}$.

We contend that the non--zero homogeneous components of $Q_{\alpha,\beta,\gamma}$ (in the variables in $X'$, $Y'$ and $Z'$) have bounded degrees. Indeed, we have the expansion
\[
\sigma[H]=\sum_{k=0}^{\infty} h_k[H]. 
\]
But all terms in $H$ have total degree at least $1$, with respect to the variables $X$, $Y$ and $Z$. Therefore for each $k$, $h_k[H]$ is a sum of homogeneous symmetric functions (in $X$, $Y$ and $Z$) of total degrees $\geq k$. When $k >  |\alpha|+|\beta|+|\gamma|$, the term $h_k[H]$ does not contribute to the scalar product with $s_{\alpha}[X] s_{\beta}[Y] s_{\gamma}[Z]$. The sum can therefore be truncated, so that
\[
Q_{\alpha,\beta,\gamma}=\scalar{\sum_{k=0}^{|\alpha|+|\beta|+|\gamma|} h_k[H]}{s_{\alpha}[X] s_{\beta}[Y] s_{\gamma}[Z]}_{X,Y,Z}.
\]
This makes clear that the homogeneous components of $Q$, as a symmetric series in $X'$, $Y'$ and $Z'$, have bounded degree. 
\end{proof}

%\input{\SectionHookStability}
%% old was: stab1.tex

%%%%%%%%%%%%%%%%%%%%%%%%%%%%%%%%%%%%%%%%%%%%%%%%%%%%%%%%%%%%%%%%%%%%%%%%%%%%%%%
%%%%%%%%%%%%%%%%%%%%%%%%%%%%%%%%%%%%%%%%%%%%%%%%%%%%%%%%%%%%%%%%%%%%%%%%%%%%%%%

\section{Hook stability}\label{hook-stability}

In this section, we show the existence of a stability phenomenon reminiscent of
the one described by Murnaghan, when we simultaneously increase the first row and first column of each of the three indexing partitions of a Kronecker coefficient. We call this ``hook stability''.

We start by establishing the corresponding property for reduced Kronecker coefficients (Section \ref{hook reduced}). The hook stability property for Kronecker coefficients is then deduced in Section \ref{hook subsection}. An alternative approach to proving this property, using only well--known properties of Kronecker coefficients, is explored in Section \ref{alternative subsection}.

\subsection{Stability for reduced Kronecker coefficients under first column increasing.}\label{hook reduced}

In this section we show that reduced Kronecker coefficients \emph{themselves} stabilize when we increase the first column of each of their three indexing partitions.

\begin{example}
  The reduced Kronecker coefficients $\RK{(2,2) \cup 1^{k}}{(3) \cup 1^{k}}{(4) \cup 1^{k}}$ stabilize with stable value $204$. Their first values for $k=1$,$2$,\ldots are   
 \[1, 17, 66, 133, 180, 198, 203, 204, 204, 204 \ldots
 \]
 and $\RK{(2,2) \cup 1^{k}}{(3) \cup 1^{k}}{(4) \cup 1^{k}}=204$ for all $k \geq 8$. 
\end{example}

More interesting examples are given in Remark \ref{ex_3cols} (where we combinatorially describe all possible
situations that can  obtained when we start with three empty shapes) and Example \ref{ex:emptypart}.

 We now proceed to study the general situation.
Let $\alpha$, $\beta$ and $\gamma$ be three partitions. %The objects $\polcol$, $\Omega$, $\Scol$ and $\scol_{a,b,c}$, $k_1$, $k_2$ and $k_3$ that appear in this section actually depend on $\alpha$, $\beta$ and $\gamma$.

 \begin{theorem}\label{3columns}
 For any triple of partitions $\alpha$, $\beta$, $\gamma$, there exist integers $k_1$, $k_2$, $k_3$ and $\SSK{\alpha}{\beta}{\gamma}$ such that whenever $a \geq \ell(\alpha)$, $b \geq \ell(\beta)$, $c \geq \ell(\gamma)$, and 
 \begin{equation}\label{trans-cone}
%braceamps
%\left\lbrace
%\begin{matrix}
%b+c-a &\geq& k_1\\
%a+c-b &\geq& k_2\\
%a+b-c &\geq& k_3,
%\end{matrix}
%\right.
\begin{matrix}
b+c-a &\geq& k_1\\
a+c-b &\geq& k_2\\
a+b-c &\geq& k_3,
\end{matrix}
\end{equation}
we have $\RK{\alpha +  (1^a)}{\beta + (1^b)}{\gamma + (1^c)}=\SSK{\alpha}{\beta}{\gamma}$.
\end{theorem}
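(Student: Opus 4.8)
The strategy is to apply Lemma~\ref{main lemma} with the column vertex operators, i.e.\ with $X'=-\varepsilon x$, $Y'=-\varepsilon y$, $Z'=-\varepsilon z$ specialized to single variables. First I would recall that, by the factorization \eqref{factorizationGamma} and the discussion preceding it, $\vertexop{-\varepsilon x}{X}s_\alpha[X]=\sum_{n\in\ZZ}\tilde s_{(1^n|\alpha)}[X]\,x^n$, and that for $n\geq\alpha'_1=\ell(\alpha)$ the coefficient $\tilde s_{(1^n|\alpha)}$ equals the honest Schur function $s_{\alpha+(1^n)}$. Pairing the product of three such operators against $\sigma[F(X,Y,Z)]$ and using Brion's formula in its symmetric form \eqref{symmetric Brion formula} (to interpret the scalar product of $\sigma[F(X,Y,Z)]$ with a product $s_{a}[X]s_{b}[Y]s_{c}[Z]$ as a reduced Kronecker coefficient), one sees that the triple generating series $\Phi_{\alpha,\beta,\gamma}$ (specialized at $X'=-\varepsilon x$ etc.) has, as coefficient of $x^ay^bz^c$ with $a\geq\ell(\alpha)$, $b\geq\ell(\beta)$, $c\geq\ell(\gamma)$, exactly the reduced Kronecker coefficient $\RK{\alpha+(1^a)}{\beta+(1^b)}{\gamma+(1^c)}$.

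Next I would invoke Lemma~\ref{main lemma}: specialized at the column variables, it gives
\[
\Phi_{\alpha,\beta,\gamma}\big|_{X'=-\varepsilon x,\,Y'=-\varepsilon y,\,Z'=-\varepsilon z}=\sigma[-\varepsilon xyz+xy+xz+yz]\cdot \widetilde Q_{\alpha,\beta,\gamma}(x,y,z),
\]
where $\widetilde Q_{\alpha,\beta,\gamma}$ is the corresponding specialization of the symmetric \emph{function} $Q_{\alpha,\beta,\gamma}$, hence a \emph{polynomial} in $x,y,z$ (only finitely many homogeneous components survive). By Proposition~\ref{files and columns}, $\sigma[-\varepsilon xyz+xy+xz+yz]=\sum_{(a,b,c)\in\mathcal{C}\cap\NN^3}x^ay^bz^c$, the indicator series of the Murnaghan cone $\mathcal{C}$. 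So the generating series is the product of a polynomial and the indicator of a translate-stable cone.

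Now the stability follows by a direct coefficient extraction. Write $\widetilde Q_{\alpha,\beta,\gamma}=\sum_{(p,q,r)}c_{p,q,r}\,x^py^qz^r$, a finite sum over some box $0\le p,q,r\le D$. Then the coefficient of $x^ay^bz^c$ in the product equals $\sum_{(p,q,r)}c_{p,q,r}\,[\,(a-p,b-q,c-r)\in\mathcal{C}\,]$. Since $\mathcal{C}$ is cut out by $\ell_1\ge 0$, $\ell_2\ge 0$, $\ell_3\ge 0$ and each $\ell_i$ is linear, the condition $(a-p,b-q,c-r)\in\mathcal{C}$ reads $\ell_i(a,b,c)\ge \ell_i(p,q,r)$ for $i=1,2,3$; the right-hand sides range over a finite set, so once $\ell_1(a,b,c)=b+c-a$, $\ell_2(a,b,c)=a+c-b$, $\ell_3(a,b,c)=a+b-c$ are all at least $k_i:=\max_{(p,q,r):c_{p,q,r}\ne 0}\ell_i(p,q,r)$ (and at least $0$), every term contributes its full $c_{p,q,r}$, and the coefficient equals the constant $\SSK{\alpha}{\beta}{\gamma}:=\sum_{(p,q,r)}c_{p,q,r}=\widetilde Q_{\alpha,\beta,\gamma}(1,1,1)$. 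Together with the constraints $a\geq\ell(\alpha)$, $b\geq\ell(\beta)$, $c\geq\ell(\gamma)$ needed to identify $\tilde s_{(1^a|\alpha)}$ with $s_{\alpha+(1^a)}$, this is exactly the claimed statement, with $k_1,k_2,k_3$ as above (one may take them $\ge 0$ by replacing each $k_i$ with $\max(k_i,0)$, or absorb $\ell(\alpha),\ell(\beta),\ell(\gamma)$ into slightly larger $k_i$ if one prefers a single system of inequalities).

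\textbf{Main obstacle.} The computational heart — and the only delicate point — is the bookkeeping in the first paragraph: checking that the ``extra'' terms in $\vertexop{-\varepsilon x}{X}s_\alpha[X]$ of degree $<\ell(\alpha)$ in $x$, which are not genuine Schur functions, genuinely do not interfere in the ranges of $a,b,c$ we care about, and that the scalar product against $\sigma[F(X,Y,Z)]$ really does produce reduced Kronecker coefficients of the shifted partitions rather than some truncation artifact. Once that identification is clean, the rest is the soft cone-translation argument above. The explicit value of $\SSK{\alpha}{\beta}{\gamma}$ and of the optimal $k_i$ is then read off from $Q_{\alpha,\beta,\gamma}$, which is the content deferred to Section~\ref{generating series} and Appendix~\ref{appbounds}.
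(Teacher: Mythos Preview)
Your proposal is correct and follows essentially the same route as the paper: specialize $\Phi_{\alpha,\beta,\gamma}$ at the column vertex operators, invoke Lemma~\ref{main lemma} to factor out $\sigma[-\varepsilon xyz+xy+xz+yz]$, use Proposition~\ref{files and columns} to identify that factor with the indicator series of $\mathcal{C}\cap\NN^3$, and then extract coefficients to see that the value becomes constant once each $\ell_i(a,b,c)$ dominates $\max_{\omega\in\Omega}\ell_i(\omega)$.

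One small correction: $\widetilde Q_{\alpha,\beta,\gamma}$ is in general a \emph{Laurent} polynomial, not a polynomial supported in a box $0\le p,q,r\le D$; the term $-X/X'$ in $H$ produces negative powers after specialization (see Example~\ref{ex:emptypart}, where $\polcol_{\ep,\ep,(1)}=x+y+xy-1/z$). This does not affect your argument, since all you actually use is that the support $\Omega$ is finite, which is exactly what Lemma~\ref{main lemma} guarantees.
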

In light of Theorem \ref{3columns}, we call the value
$\SSK{\alpha}{\beta}{\gamma}$ the \emph{column stable value} of the reduced
Kronecker coefficient.

The conditions $a \geq \ell(\alpha)$, $b \geq \ell(\beta)$, $c \geq
\ell(\gamma)$ ensure us that, after adding cells to the new  first columns of
the three original partitions, we obtain proper partitions. Note that they define a translation of the cone described by Murnaghan's inequalities: see (\ref{Murnaghan inq}).

\begin{proof}
For any nonnegative integers $a$, $b$, $c$, set
\[
\scol_{a,b,c}=\scalar{\sigma[F(X,Y,Z)]}{\tilde{s}_{(1^a|\alpha)}[X] \tilde{s}_{(1^b|\beta)}[Y] \tilde{s}_{(1^c|\gamma)}[Z]}.
\]
with $F(X,Y,Z)=XYZ+XY+XZ+YZ$.
Comparing with \eqref{symmetric Brion formula} we obtain that, when  $a \ge \ell(\alpha)$, $b \ge \ell(\beta)$ and $c \ge \ell(\gamma)$, 
\[\scol_{a,b,c}=\RK{\alpha + (1^a)}{\beta + (1^b)}{\gamma + (1^c)}.
\]
Let us consider %three new variables $x$, $y$ and $z$ and
the generating series
$
\Scol_{\alpha,\beta,\gamma}=\sum_{a,b,c} \scol_{a,b,c} x^a y^b z^c.
$
Then
\[
\Scol_{\alpha,\beta,\gamma} = \langle \sigma[F(X,Y,Z)] | \vertexop{-\varepsilon x}{X} s_{\alpha}[X] \vertexop{-\varepsilon y}{Y} s_{\beta}[Y]\vertexop{-\varepsilon z}{Z} s_{\gamma}[Z] \rangle
\]
That is, $\Scol_{\alpha,\beta,\gamma}$ is the specialization of the series $\Phi_{\alpha,\beta,\gamma}$ of Lemma \ref{main lemma} at $X'=-\varepsilon x$, $Y'=-\varepsilon y$ and $Z'=-\varepsilon z$. Let $\polcol_{\alpha,\beta,\gamma}(x,y,z)$ be the polynomial obtained from $Q_{\alpha,\beta,\gamma}$ by means of the same specialization. After Lemma \ref{main lemma}, we have thus
$
\Scol_{\alpha,\beta,\gamma}=\sigma[-\varepsilon xyz+xy+xz+yz] \cdot \polcol_{\alpha,\beta,\gamma}.
$
Set $\vvar^{(a,b,c)}=x^a y^b z^c$.
Let $\sum_{\omega \in \Omega} q_{\omega} \vvar^{\omega}=
\polcol_{\alpha,\beta,\gamma}$ be the expansion of
$\polcol_{\alpha,\beta,\gamma}$ in monomials, where $\Omega$ is the (finite)
support of $\polcol_{\alpha,\beta,\gamma}$. It follows from \eqref{gseries col
expanded} that
\[
\sigma[-\varepsilon xyz+xy+xz+yz] =
\sum_{\theta \in \mathcal{C} \cap \NN^3} \vvar^{\theta}.
\]
Therefore, \[
\Scol_{\alpha,\beta,\gamma}(x,y,z)=\sum_{\omega \in \Omega, \theta \in \mathcal{C} \cap \NN^3} q_{\omega} \vvar^{\omega+\theta}.
\]
It follows that, for any $\tau=(a,b,c) \in \ZZ^3$, $\scol_{\tau}=\sum_{\omega} q_{\omega}$, 
where the sum is over all $\omega$ such that $\tau-\omega \in \mathcal{C}$.
Recall that the cone $\mathcal{C}$ is defined by the inequalities $\ell_i \geq 0$ (see Proposition \ref{files and columns}). Therefore, the sum is over all $\omega$ such that $\ell_i(\tau-\omega) \geq 0$ for all $i$, or, equivalently, $\ell_i(\tau) \geq \ell_i(\omega)$ for all $i$.

Suppose now that  $\ell_i(\tau) \geq \ell_i(\omega)$ for all $i$ and all $\omega \in \Omega$, or, equivalently, that 
$\ell_i(\tau) \geq \max_{\omega \in \Omega} \ell_i(\omega)$ for all $i$. Then  $\scol_{\tau}=\sum_{\omega \in \Omega} q_{\omega}$, a value that does not depend on $\tau$.

This proves the theorem, with $k_i=\max_{\omega \in \Omega} \ell_i(\omega)$.
\end{proof}

\begin{remark}\label{rem:ki}
One can show that in Theorem \ref{3columns}, one can take
\[
\begin{array}{rcl}
k_1 &=& |\alpha|+\alpha_1+\beta'_1+\gamma'_1, \\
k_2 &=& |\beta|+\beta_1+\alpha'_1+\gamma'_1,   \\
k_3 &=& |\gamma|+\gamma_1+\alpha'_1+\beta'_1.
\end{array}
\]
\end{remark}

\begin{example}\label{ex:emptypart}
Let us compute some polynomials $\polcol_{\alpha,\beta,\gamma}$.

We will consider the case when $\alpha$, $\beta$ and $\gamma$ are one--row
shapes,  $(p)$, $(q)$ and $(r)$ respectively. The coefficient of $s_{(p)}[X]
s_{(q)}[Y] s_{(r)}[Z]$ in $\sigma[H(-\varepsilon x,-\varepsilon y,-\varepsilon
z)]$ can be obtained by specializing the alphabets to only one letter:  $X=\{x_1\}$, $Y=\{y_1\}$, $Z=\{z_1\}$, and taking the coefficient of $x_1^p y_1^p z_1^r$. That is, the generating function $\sigma[H(-\varepsilon x,-\varepsilon y,-\varepsilon z)]$ becomes an ordinary generating function:
\begin{multline*}
\sum \polcol_{(p ),(q),(r )}x_1^p y_1^q z_1^r =
\frac{(1+x y z_1)(1+x z y_1)(1+ y z x_1) }{(1-x_1 y_1 z_1)} \\
\times \frac{(1+y x_1)(1+x y_1) (1+z x_1)(1+x z_1)(1+ z y_1)(1+ y z_1)}{(1-x_1 y_1)(1-x_1 z_1)(1-y_1 z_1)(1+\frac{x_1}{x})(1+\frac{y_1}{y})(1+\frac{z_1}{z})}
\end{multline*}

From this, it follows, for instance,
\begin{align*}
&\polcol_{\ep,\ep,\ep}=1,\\
&\polcol_{\ep, \ep,(1)}=x+y+xy-1/z,\\
&\polcol_{\ep, \ep,(2)} =x^2 y^2 + x^2 y + x y^2 + x y - x y/z -x/z-y/z+1/z^2,\\
&\polcol_{\ep,(1),(1)}      =x^2 y z + x^2 y +x^2 z + 2\, x y z +x^2,\\
                         & \phantom{xxxxxxxxxxxx}+ x y + xz + y z -x -x/y- x/z +1/(yz)-1, 
\end{align*}
%where $\ep$ stands for the empty partition. 

Let us consider more closely the case $\ep$, $\ep$, $(1)$. This case corresponds to the reduced Kronecker coefficients $\RK{(1^a)}{(1^b)}{(2,1^{c-1})}$. From the description $\RK{(1^a)}{(1^b)}{(2,1^{c-1})}=\sum q_{\omega}$, with the sum over the $\omega$ in the support of $\polcol$ such that $(a,b,c) \in \omega + \mathcal{C}$, we obtain the following explicit description (it is assumed that $c \geq 1$):
\[
\RK{(1^a)}{(1^b)}{(2,1^{c-1})}=
\left\lbrace
\begin{matrix}
1 & \text{ for $c= |a-b|$ with $a+b > c+1$}\\
  & \text{ and for $c > |a-b|$ with $a+b=c+1$,}\\
2 & \text{ for $c> |a-b|$ with $a+b > c+1$,}\\
0 & \text{ otherwise.}
\end{matrix}
\right.
\]
This is  the Kronecker coefficient $\K{(n-a,1^a)}{(n-b,1^b)}{(n-c-1,2,1^{c-1})}$ for $n \geq (a+b+c+5)/2$. 

This result also follows from the computations in \cite{Rosas} and \cite{Thibon}.
\end{example}

\subsection{Towards hook stability for the Kronecker coefficients.}\label{hook subsection}

We  discuss how, combining our results, with the classical stability phenomena of Murnaghan, we obtain that the Kronecker coefficients are
stable when we increase the first row and first column of the three indexing
partitions \emph{simultaneously}. 
We will be using the notations for $\cut{\lambda}$, $\cutt{\lambda}$, $\lambda\op{a}{b}$ as defined in Section \ref{partitions}.

\begin{example}
Table \ref{table:ex KRKSSK} presents the Kronecker coefficients $\K{\lambda\op{i}{j}}{\lambda\op{i}{j}}{\lambda\op{i}{j}}$ for $\lambda=(3,3)$ and $i$ and $j$ between $0$ and $9$. 
We know that each column of the table is stable because of Murnaghan's result,
and that each row is eventually zero because these sequences will eventually
fail a condition for positivity described by Dvir, Klemm, and Clausen--Meier in \cite{Dvir, Klemm, ClausenMeier}. But we observe a more general stability phenomenon. There is a grey region where the coefficients are  $145$. 

\begin{table}[htbp]
\begin{tabular}{c|rrrrrrrrrr}
\backslashbox{$i$}{$j$}&0 & 1 & 2 & 3 & 4 & 5 & 6 & 7 & 8 & 9\\
\hline
0 & 0 & 1 & 5 & 5 & 1 & 0 & 0 & 0 & 0 & 0\\
1 & 1 & 8 & 27 & 40 & 30 & 11 & 1 & 0 & 0 & 0\\
2 & 1 & 15 & 53 & 89 & 91 & 64 & 33 & 11 & 1 & 0\\
3 & 2 & 19 & 62 & 108 & 129 & 122 & 97 & 64 & 33 & 11\\
4 & 2 & 19 & 63 & 112 & 138 & 141 & 135 & 122 & 97 & 64\\
5 & 2 & 19 & 63 & 112 & 139 & \cbox & 144 & 141 & 135 & 122\\
6 & 2 & 19 & 63 & 112 & 139 & \cbox & \cbox & \cbox & 144 & 141\\
7 & 2 & 19 & 63 & 112 & 139 & \cbox & \cbox & \cbox & \cbox & \cbox\\
8 & 2 & 19 & 63 & 112 & 139 & \cbox & \cbox & \cbox & \cbox & \cbox\\
9 & 2 & 19 & 63 & 112 & 139 & \cbox & \cbox & \cbox & \cbox & \cbox
\end{tabular}
\caption{The Kronecker coefficients $\K{(3,3)\op{i}{j}}{(3,3)\op{i}{j}}{(3,3)\op{i}{j}}$. }\label{table:ex KRKSSK}
\end{table}

\end{example}

Let $\lambda$, $\mu$ and $\nu$ be three non--empty partitions of the same
weight. Let $a$, $b$, $c$ and $m$ be nonnegative integers, such that $a$, $b$
and $c$ do not exceed $m$. Under certain conditions, made precise in Theorem
\ref{thm:hook stab} below, we will have:
\[
\K{\lambda\op{m-a}{a}}{\mu\op{m-b}{b}}{\nu\op{m-c}{c}} = \RK{\cut{\lambda} \cup (1^a)}{\cut{\mu} \cup (1^b)}{\cut{\nu} \cup (1^c)} = \SSK{\cutt{\lambda}}{\cutt{\mu}}{\cutt{\nu}}.
\]
This is made precise in the following theorem. 

\begin{theorem}\label{thm:hook stab}
For any triple of non--empty partitions $\lambda$, $\mu$, $\nu$ of the same weight, there exists integers $d_1$, $d_2$, $d_3$ and $d$ such that for all $(a,b,c,m) \in \NN^4$ with 
\begin{equation}\label{translation of E}
%braceamps
%\left\lbrace
%\begin{matrix}
%\ell_i(a,b,c) &\geq  d_i & \text{ for all } i\in\{1,2,3\},\\
%m -(a+b+c)/2  &\geq  d,     & \\
%m \geq a, b, c.
%\end{matrix}
%\right.
\begin{matrix}
\ell_i(a,b,c) &\geq  d_i & \text{ for all } i\in\{1,2,3\},\\
m -(a+b+c)/2  &\geq  d,     & \\
m \geq a, b, c.
\end{matrix}
\end{equation}
we have, 
\begin{equation}\label{KRKSSK}
\K{\lambda\op{m-a}{a}}{\mu\op{m-b}{b}}{\nu\op{m-c}{c}} = \RK{\cut{\lambda} \cup (1^a)}{\cut{\mu} \cup (1^b)}{\cut{\nu} \cup (1^c)} = \SSK{\cutt{\lambda}}{\cutt{\mu}}{\cutt{\nu}}.
\end{equation}
\end{theorem}
The linear forms $\ell_i(a,b,c)$ in the theorem are those defined in \eqref{elli}.

\begin{proof}

Let $N$ be the weight of $\lambda$, $\mu$ and $\nu$. The second equality in \eqref{KRKSSK} holds when $\ell_i(a+\lambda'_1-1,b+\mu'_1-1,c+\nu'_1-1) \geq k_i(\cutt{\lambda},\cutt{\mu},\cutt{\nu})$ for all $i$, where $k_i$ are defined in Theorem \ref{3columns}. We have $\ell_i(a+\lambda'_1-1,b+\mu'_1-1,c+\nu'_1-1)=\ell_i(a,b,c)+\ell_i(\lambda'_1,\mu'_1,\nu'_1)-1$. Therefore the second equality holds when, for all $i$, we have $\ell_i(a,b,c) \geq d_i$, with $d_i=k_i(\cutt{\lambda},\cutt{\mu},\cutt{\nu})-\ell_i(\lambda'_1,\mu'_1,\nu'_1)+1$.

On the other hand, the first equality in \eqref{KRKSSK} holds when $m+N \geq N_0(\cut{\lambda} \cup (1^a),\cut{\mu} \cup(1^b),\cut{\nu} \cup (1^c))$ (the number $N_0$ as defined in \eqref{N0}). Lemma \ref{lemma:N0}, that comes just below, shows that
\[
N_0(\cut{\lambda} \cup (1^a),\cut{\mu} \cup(1^b),\cut{\nu} \cup (1^c)) \leq N_0(\cutt{\lambda}, \cutt{\mu}, \cutt{\nu}) + \frac{\lambda'_1+\mu'_1+\nu'_1}{2}+\frac{a+b+c}{2}.
\]
From this we conclude that the first equality holds when $m -(a+b+c)/2  \geq  d$ with 
$
d=N_0(\cutt{\lambda}, \cutt{\mu}, \cutt{\nu}) + \frac{\lambda'_1+\mu'_1+\nu'_1}{2} - N.
$
\end{proof}

\begin{example}
Let us go back to Table 1. The reduced Kronecker coefficients $\RK{(3) \cup (1^j)}{(3) \cup (1^j)}{(3) \cup (1^j)}$ are  $2, 19, 63, 112, 139$ and then, for $j \geq 5$, to  $\SSK{(2)}{(2)}{(2)}=145.$ 
Moreover, the sequences are stable  when $j \geq 5$ and $i-5 \geq (j-5)/2$.

 The  Kronecker coefficients  of the main diagonal
 are $0$, $8$, $53$, $108$, $138$, and finally $145$ for all $m \geq 5$. The values of the bounds $d$ and $d_i$ for the stability degrees  given in the proof of Theorem \ref{thm:hook stab} are $d=3$ and $d_i=5$. This corresponds to stability for $j \geq 5$ and $i-j/2 \geq 3$, which is not far from being sharp.

\end{example}

\begin{lemma}\label{lemma:N0}
Let $\lambda$, $\mu$ and $\nu$ be three non--empty partitions with the same weight. We have
$
N_0(\cut{\lambda}, \cut{\mu}, \cut{\nu}) \leq N_0(\cutt{\lambda}, \cutt{\mu}, \cutt{\nu}) + \frac{\lambda'_1+\mu'_1+\nu'_1}{2}.
$
\end{lemma}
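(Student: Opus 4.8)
The plan is to clear the factor $\tfrac12$ hidden in $N_0$, reduce the estimate to a statement about a single partition, and then verify that statement by a direct computation with partition statistics. Recalling from \eqref{N0} that $N_0(\alpha,\beta,\gamma)=\tfrac12\bigl((|\alpha|+\alpha_1)+(|\beta|+\beta_1)+(|\gamma|+\gamma_1)\bigr)$, the asserted inequality is equivalent, after multiplying by $2$, to
\[
\sum_{\pi\in\{\lambda,\mu,\nu\}}\bigl(|\cut\pi|+(\cut\pi)_1\bigr)\;\le\;\sum_{\pi\in\{\lambda,\mu,\nu\}}\bigl(|\cutt\pi|+(\cutt\pi)_1\bigr)+(\lambda'_1+\mu'_1+\nu'_1).
\]
Hence it suffices to prove, for a single non--empty partition $\pi$, the local inequality
\[
|\cut\pi|+(\cut\pi)_1\;\le\;|\cutt\pi|+(\cutt\pi)_1+\pi'_1,
\]
and then add the three instances (for $\pi=\lambda,\mu,\nu$).

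To establish the local inequality I would rewrite each statistic in terms of the parts of $\pi$. Since $\cut\pi=(\pi_2,\pi_3,\dots)$ we have $|\cut\pi|=|\pi|-\pi_1$ and $(\cut\pi)_1=\pi_2$. Since $\cutt\pi$ is obtained from $\pi$ by deleting the first row and then subtracting $1$ from each surviving part, $\cutt\pi=(\pi_2-1,\pi_3-1,\dots)$ with trailing zeros dropped; deleting the first row and first column removes $\pi_1+\ell(\pi)-1$ cells, so $|\cutt\pi|=|\pi|-\pi_1-\ell(\pi)+1$, and $(\cutt\pi)_1=\max(\pi_2-1,0)$. Using the standard identity $\pi'_1=\ell(\pi)$, we get $|\cutt\pi|=|\pi|-\pi_1-\pi'_1+1$.

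The verification then splits into two cases. If $\ell(\pi)\ge 2$ then $\pi_2\ge 1$, so $(\cutt\pi)_1=\pi_2-1$, and both sides of the local inequality equal $|\pi|-\pi_1+\pi_2$; we get equality. If $\ell(\pi)=1$ (a single--row shape) then $\cut\pi$ and $\cutt\pi$ are both empty, so the left side is $0$ while the right side is $\pi'_1=1$, and the inequality holds (strictly). Summing the three local inequalities and dividing by $2$ yields the lemma. There is no real obstacle in this argument; the only point that requires a little care is the edge case $\ell(\pi)=1$, where $(\cutt\pi)_1$ equals $0$ rather than $\pi_2-1$ — and this is exactly where the inequality is strict rather than an equality. (Note that the common--weight hypothesis on $\lambda$, $\mu$, $\nu$ is in fact not used.)
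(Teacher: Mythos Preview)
Your proof is correct and follows essentially the same approach as the paper: both reduce to a single-partition inequality, compute $|\cut\pi|$, $(\cut\pi)_1$, $|\cutt\pi|$, $(\cutt\pi)_1$ in terms of the parts of $\pi$, and split into the two cases $\ell(\pi)\ge 2$ (equality) and $\ell(\pi)=1$ (strict inequality). Your remark that the equal-weight hypothesis is unused and that strictness occurs exactly when some partition is a single row is also noted in the paper.
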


\begin{proof}
Recall from \eqref{N0} that
$
N_0(\cut{\lambda}, \cut{\mu}, \cut{\nu}) =\frac{|\cut{\lambda}|+\cut{\lambda}_1+|\cut{\mu}|+\cut{\mu}_1+|\cut{\nu}|+\cut{\nu}_1}{2}.
$
Observe that $|\cut{\lambda}|=|\cutt{\lambda}|+ (\lambda'_1-1)$ and 
$$
\cut{\lambda}_1=
\left\lbrace
\begin{matrix}
\cutt{\lambda}_1&+1 & \text { if $\ell(\lambda) \geq 2$,}\\
\cutt{\lambda}_1 &  & \text { if $\ell(\lambda) = 1$.}
\end{matrix}
\right.
$$
Likewise for $\mu$ and $\nu$ instead of $\lambda$. The lemma follows. 
Additionally we see that the inequality is actually an equality, except when at least one of the partitions has only one row.
\end{proof}

\begin{corollary}\label{corollary hook m}
Let  $\lambda$, $\mu$ and $\nu$ be  non--empty partitions  of the same weight. The sequence of Kronecker coefficients
$
\K{\lambda\op{n}{n}}{\mu\op{n}{n}}{\nu\op{n}{n}}
$
stabilizes to  $\SSK{\cutt{\lambda}}{\cutt{\mu}}{\cutt{\nu}}$.
\end{corollary}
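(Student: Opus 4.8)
The plan is to deduce the corollary directly from Theorem \ref{thm:hook stab} by checking that the diagonal sequence $(a,b,c,m)=(n,n,n,2n)$ eventually satisfies all the hypotheses \eqref{translation of E}. First I would observe that $\lambda\op{n}{n}=\lambda+(n)\cup(1^n)$, so with $m=2n$, $a=b=c=n$ we have $m-a=m-b=m-c=n$, i.e. the parameters of the theorem specialize as $\lambda\op{m-a}{a}=\lambda\op{n}{n}$, and likewise for $\mu$ and $\nu$; this matches the left-hand side of \eqref{KRKSSK} verbatim. Next I would check the three families of inequalities in \eqref{translation of E} for this choice. Since $\ell_i(n,n,n)=n$ for each $i$ (each $\ell_i$ is a balanced sum $x+y-z$), the condition $\ell_i(a,b,c)\ge d_i$ becomes $n\ge d_i$, which holds for all $n\ge\max_i d_i$. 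The condition $m-(a+b+c)/2\ge d$ becomes $2n-3n/2=n/2\ge d$, which holds for $n\ge 2d$. The condition $m\ge a,b,c$ becomes $2n\ge n$, which is automatic.

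Therefore, for all $n$ with $n\ge\max\{d_1,d_2,d_3,2d\}$, Theorem \ref{thm:hook stab} applies and gives
\[
\K{\lambda\op{n}{n}}{\mu\op{n}{n}}{\nu\op{n}{n}}
=\RK{\cut{\lambda}\cup(1^n)}{\cut{\mu}\cup(1^n)}{\cut{\nu}\cup(1^n)}
=\SSK{\cutt{\lambda}}{\cutt{\mu}}{\cutt{\nu}},
\]
a value independent of $n$. Since the sequence is eventually constant equal to $\SSK{\cutt{\lambda}}{\cutt{\mu}}{\cutt{\nu}}$, it stabilizes to that value, which is exactly the assertion of the corollary.

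There is essentially no obstacle here: the only thing to be careful about is that the quantity $m-(a+b+c)/2$ in \eqref{translation of E} need not be an integer in general, but along the diagonal it equals $n/2$, so the bound $n/2\ge d$ is a clean numerical condition; and one should note that $\lambda$, $\mu$, $\nu$ non-empty is precisely the hypothesis needed to invoke Theorem \ref{thm:hook stab} (and to make $\cutt{\lambda}$ etc.\ well defined). If one wishes, the explicit stabilization threshold $n\ge\max\{d_1,d_2,d_3,2d\}$ can be recorded, with $d_i$ and $d$ as given in the proof of Theorem \ref{thm:hook stab}; but for the bare statement of the corollary the qualitative argument above suffices.
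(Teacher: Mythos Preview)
Your proposal is correct and follows essentially the same approach as the paper: the paper's proof simply observes that $(a,b,c,m)=(n,n,n,2n)$ satisfies all the inequalities of \eqref{translation of E} for $n\gg 0$, and the subsequent remark spells out exactly the computation you give (namely $\ell_i=n$ and $m-(a+b+c)/2=n/2$, yielding the threshold $n\ge\max(2d,d_1,d_2,d_3)$).
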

\begin{proof}
This corresponds to $(a,b,c,m)=(n,n,n,2n)$ and fulfills all inequalities in \eqref{translation of E} for $n \gg 0$.
\end{proof}

\begin{remark}
For $(a,b,c,m)=n\cdot(1,1,1,2)$ we have $\ell_i=n$ and $m-(a+b+c)/2=n/2$. Therefore the stable behavior in Corollary \ref{corollary hook m} takes place already for $n \geq \max(2\,d, d_1, d_2, d_3)$.
\end{remark}

\subsection{Another approach to the hook stability property, derived from Murnaghan's stability and conjugation.}\label{alternative subsection}

In this section we show that using only the well--known invariance of the
Kronecker coefficients under conjugating two of their three indexing partitions
(see for instance \cite{Macdonald, Stanley}),
\begin{equation}\label{eq:symeq}
	 \K{\lambda}{\mu}{\nu}=\K{\lambda'}{\mu'}{\nu}=\K{\lambda'}{\mu}{\nu'}=\K{\lambda}{\mu'}{\nu'},
\end{equation}
it is not difficult to prove Theorem \ref{thm:hook stab} in a special case. 

Namely, one derives from the symmetry property in \eqref{eq:symeq}, in an elementary way, that for any three
partitions $\lambda$, $\mu$, $\nu$ of the same weight,  there exists integers
$d$, $d_1$, $d_2$, $d_3$ such that \eqref{KRKSSK} holds when \eqref{translation
of E} holds with \emph{additional condition}  that $a+b+c \equiv 0 \mod 2$. 
To recover the full theorem, it would be enough to establish that there exists $m$ big enough such that
\begin{multline}\label{enough}
\K{\lambda\op{2m}{2m}}{\mu\op{2m}{2m}}{\nu\op{2m}{2m}} 
=\\
\K{\lambda\op{2m+1}{2m+1}}{\mu\op{2m+1}{2m+1}}{\nu\op{2m+1}{2m+1}} .
\end{multline}

\begin{conjecture}\label{conj mono}
For any three partitions $\lambda$, $\mu$ and $\nu$ of the same weight, and any $(a,b,c,m)$ fulfilling the inequalities
\begin{equation}\label{coneD}
%braceamps
%\left\lbrace
%\begin{matrix}
%\ell_i \geq 0 & \text{ for all $i \in \{1,2,3\}$,}\\
%m \geq a + b + c.
%\end{matrix}
%\right.
\begin{matrix}
\ell_i \geq 0 & \text{ for all $i \in \{1,2,3\}$,}\\
m \geq a + b + c.
\end{matrix}
\end{equation}
there is
\begin{equation}\label{eqmono}
\K{\lambda}{\mu}{\nu} 
\leq 
\K{\lambda\op{m-a}{a}}{\mu \op{m-b}{b}}{\nu\op{m-c}{c}}.
\end{equation}
\end{conjecture}

Again, using the symmetries of the Kronecker coefficients, it is not difficult
to prove this conjecture in a restricted case, namely that \eqref{eqmono} holds for all partitions $\lambda$, $\mu$, $\nu$ of the same weight and all $(a,b,c,m)$ fulfilling \eqref{coneD} and, additionally, that $a +b +c \equiv 0 \mod 2$.

Therefore Conjecture \ref{conj mono} is equivalent to the following 
seemingly much weaker statement. 
\begin{conjecture}[Equivalent form of Conjecture \ref{conj mono}]\label{conj 111}
For any three partitions $\lambda$, $\mu$ and $\nu$ of the same weight,
\[
\K{\lambda}{\mu}{\nu} \leq \K{\lambda\op{1}{1}}{\mu\op{1}{1}}{\nu\op{1}{1}}.
\]
\end{conjecture}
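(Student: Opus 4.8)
The plan is to deduce Conjecture \ref{conj 111} from Conjecture \ref{conj mono} --- more precisely, to prove the two conjectures are equivalent, as asserted just before the statement --- and then to reduce the general monotonicity statement of Conjecture \ref{conj mono} to the single elementary step $\op{1}{1}$. The key observation is that one already has a proof of \eqref{eqmono} in the parity-even case $a+b+c\equiv 0\bmod 2$ ``using the symmetries of the Kronecker coefficients'' (the conjugation relations \eqref{eq:symeq}), so the whole problem is to bridge the parity gap with a single $\op{1}{1}$ step.

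First I would make explicit the parity-even case: starting from $\K{\lambda}{\mu}{\nu}$, applying $\op{m-a}{a}$ to all three partitions with $a+b+c$ even, one can pair up the operations so that each increment is realized as conjugating two partitions, incrementing a first row via Murnaghan-type weak monotonicity (Brion's result quoted in Section \ref{Murnaghan}, that $\K{\lambda+(m)}{\mu+(m)}{\nu+(m)}$ is weakly increasing), and conjugating back; the condition $a+b+c\equiv 0\bmod 2$ is exactly what lets the conjugations be applied in compatible pairs. This gives $\K{\lambda}{\mu}{\nu}\le \K{\lambda\op{m-a}{a}}{\mu\op{m-b}{b}}{\nu\op{m-c}{c}}$ whenever \eqref{coneD} holds and $a+b+c$ is even. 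Second, I would observe that Conjecture \ref{conj 111} is the special case $(a,b,c,m)=(1,1,1,3)$ --- wait, rather $(a,b,c,m)=(1,1,1,2)$ up to the slack in $m$; in any case $\op{1}{1}$ applied once --- and that iterating it gives $\K{\lambda}{\mu}{\nu}\le\K{\lambda\op{k}{k}}{\mu\op{k}{k}}{\nu\op{k}{k}}$ for every $k$. Combining one $\op{1}{1}$ step (odd shift $a+b+c=3$) with the parity-even case then covers every $(a,b,c,m)$ in the cone \eqref{coneD}: given an odd-sum triple $(a,b,c)$, first apply $\op{1}{1}$ to reach $(a-1,b-1,c-1)$-type offsets with even sum and enough room in $m$, then apply the parity-even result. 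Hence Conjecture \ref{conj 111} $\Rightarrow$ Conjecture \ref{conj mono}; the reverse implication is immediate by specialization.

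The technical core to write carefully is the parity-even monotonicity via conjugation: one must check that $\op{a}{b}$ interacts correctly with conjugation, i.e.\ that conjugating a partition turns adding a first row into adding a first column and vice versa, and that the $N_0$-type bounds from \eqref{N0} and from Theorem \ref{3columns} can be met simultaneously so that all intermediate equalities/inequalities are valid. Concretely, $\lambda\op{a}{b}$ has conjugate obtained by adding $a$ to the first part of $\lambda'$ below the first column and $b$ to the first row suitably; tracking this bookkeeping is routine but must be stated precisely. I would present it as a short lemma: if $a+b+c$ is even then $\K{\lambda\op{m-a}{a}}{\mu\op{m-b}{b}}{\nu\op{m-c}{c}}$ can be written, after conjugating pairs of partitions, as a Kronecker coefficient of the form $\K{\lambda'+(p)}{\mu'+(q)}{\nu'}$ with all three weights equal, to which Brion's weak monotonicity applies.

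The main obstacle --- and the reason this is only a conjecture --- is precisely \eqref{enough}: the single odd-to-even bridging step $\K{\lambda\op{2m}{2m}}{\cdots}\le\K{\lambda\op{2m+1}{2m+1}}{\cdots}$, equivalently Conjecture \ref{conj 111} itself. Everything else (the parity-even case and the equivalence of the two conjectures) is an elementary consequence of \eqref{eq:symeq} and Brion's monotonicity; there is no known way to get this last $\op{1}{1}$ step, since adding one cell to a row and one cell to a column of all three partitions is not of the form $\K{\lambda+(m)}{\mu+(m)}{\nu+(m)}$ nor a conjugate of such, so Murnaghan-type arguments do not reach it. Thus I expect the ``proof'' here to consist of establishing the equivalence and the parity-even case rigorously, and explicitly flagging \eqref{enough}/Conjecture \ref{conj 111} as the single remaining gap --- which is exactly the status the paper claims.
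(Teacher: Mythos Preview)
Your proposal is essentially correct and matches the paper's approach: the ``proof'' attached to this conjecture is really the proof of the \emph{equivalence} with Conjecture \ref{conj mono}, and both you and the paper obtain it by (i) establishing the parity-even case of \eqref{eqmono} via the conjugation symmetry \eqref{eq:symeq} combined with Brion's weak monotonicity \eqref{ineq 111}, and (ii) observing that a single $\op{1}{1}$ step bridges to the odd-parity case. In the appendix the paper makes step (i) precise as follows: conjugating two of the three partitions, applying $\K{\lambda}{\mu}{\nu}\le\K{\lambda+(1)}{\mu+(1)}{\nu+(1)}$, and conjugating back yields the three elementary inequalities $\K{\lambda}{\mu}{\nu}\le\K{\lambda\op{0}{1}}{\mu\op{0}{1}}{\nu\op{1}{0}}$ (and its two cyclic variants); together with the plain Murnaghan step these generate exactly the semigroup $\mathcal S$ of points $(a,b,c,m)$ in \eqref{coneD} with $a+b+c$ even, and every odd-parity point is $(1,1,1,2)+\mathcal S$.

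One small correction: in your ``technical core'' paragraph you say one must check that the $N_0$-type bounds from \eqref{N0} and Theorem \ref{3columns} are met so that the intermediate inequalities are valid. This is unnecessary here. Brion's weak monotonicity holds for \emph{all} triples $(\lambda,\mu,\nu)$, not only in the stable range, so the parity-even monotonicity argument needs no bounds at all. The $N_0$ bounds enter only when one wants to upgrade monotonicity to \emph{stability} (Theorem \ref{thm:hook stab}); for the equivalence of the two conjectures they play no role. With that adjustment, your outline coincides with the paper's.
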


\begin{remark}
Conjecture \ref{conj 111} was checked by computer, with SAGE \cite{SAGE}, for all triples of partitions of weight at most $16$.
\end{remark}

\begin{remark}%\label{alternate proof}
A proof of Conjecture \ref{conj 111} would provide an alternative proof of Theorem \ref{thm:hook stab}. 

Indeed, assuming Conjecture \ref{conj 111}, we have the inequalities
\begin{multline}
\K{\lambda\op{2m}{2m}}{\mu\op{2m}{2m}}{\nu\op{2m}{2m}} \\
\le \K{\lambda\op{2m+1}{2m+1}}{\mu\op{2m+1}{2m+1}}{\nu\op{2m+1}{2m+1}}\\
\le 
 \K{\lambda\op{2m+2}{2m+2}}{\mu\op{2m+2}{2m+2}}{\nu\op{2m+2}{2m+2}}.
\end{multline}
The two bounds in this inequality are equal for $m$ big enough by the hook
stability property proved using only the invariance of the Kronecker
coefficients under conjugation in \eqref{eq:symeq}.  Then \eqref{enough} would follow.
\end{remark}

%%%%%%%%%%%%%%%%%%%%%%%%%%%%%%%%%%%%%%%%%%%%%%%%%%%%%%%%%%%%%%%%%%%%%%%%%%%%%%%
%%%%%%%%%%%%%%%%%%%%%%%%%%%%%%%%%%%%%%%%%%%%%%%%%%%%%%%%%%%%%%%%%%%%%%%%%%%%%%%

\section{The second row} \label{section2row}

In this section, we describe the asymptotic behavior of some sequences of Kronecker coefficients 
$\K{\lambda + n \alpha}{\mu+n \beta}{\nu+n\gamma}$
where the integer $n$ varies, and the partitions $\alpha$, $\beta$ and $\gamma$ have at most two parts. To this end, we move to the setting of the reduced Kronecker coefficients.

We first consider in Section \ref{2row for reduced} the family of reduced Kronecker coefficients $\RK{(a,\alpha)}{(b,\beta)}{(c,\gamma)}$ where the first parts $a$, $b$ and $c$ vary arbitrarily, while the remaining parts $\alpha$, $\beta$, $\gamma$ are fixed. We obtain for these coefficients, when $a$, $b$ and $c$ are big enough, quasipolynomial formulas in $a$, $b$, $c$, of degree at most $1$ and period at most $2$. This generalizes Proposition \ref{files and columns} corresponding to $\alpha$, $\beta$, $\gamma$ equal to the empty partition. 

We determine in Section \ref{A zero} the vanishing of the generic leading coefficient $A_{\alpha,\beta,\gamma}$ in these formulas.
In Section \ref{asymptotics reduced}, we describe the asymptotic behavior of sequences of reduced Kronecker coefficients $\RK{\lambda + n\cdot (a)}{\mu + n\cdot (b)}{\nu + n\cdot (c)}$ with the partitions $\lambda$, $\mu$, $\nu$ and the integers $a$, $b$, $c$ fixed, while $n$ varies. 

The asymptotic behaviors of the corresponding sequences of Kronecker coefficients is then derived in Section \ref{asymptotics}.

\subsection{For reduced Kronecker coefficients}\label{2row for reduced}

In this section, we obtain quasipolynomial formulas in $a$, $b$, $c$ for some reduced Kronecker coefficients $\RK{(a,\alpha)}{(b,\beta)}{(c,\gamma)}$, with $\alpha$, $\beta$, $\gamma$ fixed.

\begin{theorem} \label{2row}
Let $\alpha$, $\beta$ and $\gamma$ be three partitions. There exists integers $k'_1$, $k'_2$, $k'_3$ and $A_{\alpha,\beta,\gamma}$, $B_{\alpha,\beta,\gamma}$ and $C_{\alpha,\beta,\gamma}$, such that whenever $a \geq \alpha_1$, $b \geq \beta_1$, $c \geq \gamma_1$ and 
\begin{align}\label{ineq 2rows}
%braceamps
%\left\lbrace
%\begin{matrix}
%a -b&\geq& k'_1\\
%a-c &\geq& k'_2\\
%b+c-a &\geq& k'_3
%\end{matrix}
%\right.
\begin{matrix}
a -b&\geq& k'_1,\\
a-c &\geq& k'_2,\\
b+c-a &\geq& k'_3
\end{matrix}
\end{align}
we have 
\[
\RK{(a,\alpha)}{(b,\beta)}{(c,\gamma)}= 
\frac{1}{2} A_{\alpha,\beta,\gamma} \cdot (b+c-a)+ B_{\alpha,\beta,\gamma} +
%braceamps
%\left\lbrace
%\begin{matrix}
%0 & \text{ for $b+c-a$ even,}\\
%C_{\alpha,\beta,\gamma}/2  & \text{ for $b+c-a$ odd.}
%\end{matrix}
%\right.
\left\lbrace
\begin{matrix}
0 & \text{ for $b+c-a$ even,}\\
C_{\alpha,\beta,\gamma}/2  & \text{ for $b+c-a$ odd.}
\end{matrix}
\right.
\]
\end{theorem}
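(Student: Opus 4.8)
The plan is to mirror the proof of Theorem \ref{3columns}, but using the ordinary vertex operator $\vertexop{t}{X}$ (the $X'=t$ case of $\vertexop{X'}{X}$) instead of the column vertex operator $\vertexop{-\varepsilon t}{X}$. Concretely, for nonnegative integers $a$, $b$, $c$ set
\[
\srow_{a,b,c}=\scalar{\sigma[F(X,Y,Z)]}{s_{(a,\alpha)}[X]\, s_{(b,\beta)}[Y]\, s_{(c,\gamma)}[Z]}_{X,Y,Z},
\]
which equals $\RK{(a,\alpha)}{(b,\beta)}{(c,\gamma)}$ as soon as $a\geq\alpha_1$, $b\geq\beta_1$, $c\geq\gamma_1$ (so that $(a,\alpha)$ etc.\ are genuine partitions), by comparison with \eqref{symmetric Brion formula}. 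Using \eqref{V1}, the generating series $\Srow_{\alpha,\beta,\gamma}=\sum_{a,b,c}\srow_{a,b,c}\,x^a y^b z^c$ is exactly the specialization of $\Phi_{\alpha,\beta,\gamma}$ from Lemma \ref{main lemma} at $X'=x$, $Y'=y$, $Z'=z$. Hence, writing $\polrow_{\alpha,\beta,\gamma}(x,y,z)$ for the corresponding specialization of $Q_{\alpha,\beta,\gamma}$, Lemma \ref{main lemma} gives
\[
\Srow_{\alpha,\beta,\gamma}=\sigma[xyz+xy+xz+yz]\cdot\polrow_{\alpha,\beta,\gamma}(x,y,z),
\]
where $\polrow_{\alpha,\beta,\gamma}$ is an honest polynomial, say $\polrow_{\alpha,\beta,\gamma}=\sum_{\omega\in\Omega}q_\omega\vvar^\omega$ with $\Omega$ finite.

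Next I would plug in the expansion \eqref{gseries row expanded}, which says that the coefficient of $x^i y^j z^k$ in $\sigma[xyz+xy+xz+yz]$ is $1+[\min_i\ell_i(i,j,k)/2]$ when $(i,j,k)\in\mathcal{C}$ and $0$ otherwise. Therefore, for $\tau=(a,b,c)$,
\[
\srow_\tau=\sum_{\omega\in\Omega,\ \tau-\omega\in\mathcal{C}} q_\omega\left(1+\left[\tfrac{1}{2}\min_i \ell_i(\tau-\omega)\right]\right).
\]
Now comes the key observation: $\ell_i$ is linear and $\ell_i(\tau-\omega)=\ell_i(\tau)-\ell_i(\omega)$, and the hypotheses \eqref{ineq 2rows} with $k'_1,k'_2,k'_3$ chosen large (namely $k'_i\geq \max_{\omega\in\Omega}\ell_i(\omega)$, adjusted so that moreover $\ell_3(\tau)-\ell_3(\omega)$ is the strict minimum among the three) force two things simultaneously for every $\omega\in\Omega$: first, $\tau-\omega\in\mathcal{C}$, so the summation range is all of $\Omega$; second, $\min_i\ell_i(\tau-\omega)=\ell_3(\tau-\omega)=\ell_3(\tau)-\ell_3(\omega)=(b+c-a)-\ell_3(\omega)$. (Here I use that $\ell_3(a,b,c)=b+c-a$, consistent with the asymmetric roles of $a,b,c$ in \eqref{ineq 2rows} where $a$ is large compared to $b,c$.) Substituting, and writing $n=b+c-a$,
\[
\srow_\tau=\sum_{\omega\in\Omega} q_\omega\left(1+\left[\tfrac{n-\ell_3(\omega)}{2}\right]\right).
\]

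The final step is to split this according to the parity of $n$, using $[m/2]=(m-\epsilon(m))/2$ where $\epsilon(m)\in\{0,1\}$ is the parity of $m$, hence $[(n-\ell_3(\omega))/2]$ depends on $\omega$ only through the parity of $n-\ell_3(\omega)$, i.e.\ through the parity of $n$ and of $\ell_3(\omega)$. Grouping the $\omega\in\Omega$ by the parity of $\ell_3(\omega)$, one gets a linear-in-$n$ leading term with slope $\tfrac12\sum_\omega q_\omega$, a constant term, and a parity correction term proportional to the parity of $n$. Reading off the three groups of coefficients yields
\[
A_{\alpha,\beta,\gamma}=\sum_{\omega\in\Omega}q_\omega=\polrow_{\alpha,\beta,\gamma}(1,1,1),
\]
together with explicit closed forms for $B_{\alpha,\beta,\gamma}$ and $C_{\alpha,\beta,\gamma}$ in terms of the $q_\omega$ and the residues $\ell_3(\omega)\bmod 2$; in particular $C_{\alpha,\beta,\gamma}$ is (up to sign) $\sum_{\omega}(-1)^{\ell_3(\omega)}q_\omega$-type combination, which one can also recognize as a signed specialization of $\polrow_{\alpha,\beta,\gamma}$. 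This establishes the quasipolynomial formula of degree $\leq 1$ and period $\leq 2$.

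The main obstacle is purely bookkeeping: one must choose $k'_1,k'_2,k'_3$ carefully so that the hypotheses \eqref{ineq 2rows} simultaneously (i) keep $\tau-\omega$ inside the cone $\mathcal{C}$ for all $\omega\in\Omega$ and (ii) guarantee that among $\ell_1(\tau-\omega),\ell_2(\tau-\omega),\ell_3(\tau-\omega)$ it is always the third, $b+c-a-\ell_3(\omega)$, that attains the minimum — this is what breaks the symmetry between $a$, $b$, $c$ and is the reason the theorem is stated with $a$ "large" relative to $b,c$. Once the minimum is pinned down to a single linear form, the parity split and the identification of $A$, $B$, $C$ are routine. Explicit admissible values of the $k'_i$ (analogous to Remark \ref{rem:ki}) can then be extracted from the support bound $|\alpha|+|\beta|+|\gamma|$ on the degrees of $\polrow_{\alpha,\beta,\gamma}$ coming from the truncation in the proof of Lemma \ref{main lemma}.
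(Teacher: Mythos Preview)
Your approach is exactly the paper's: specialize Lemma~\ref{main lemma} at $X'=x$, $Y'=y$, $Z'=z$, factor $\Srow_{\alpha,\beta,\gamma}=\sigma[xyz+xy+xz+yz]\cdot\polrow_{\alpha,\beta,\gamma}$, expand via \eqref{gseries row expanded}, and then choose the $k'_i$ so that for every $\omega\in\Omega$ the point $\tau-\omega$ lies in the subcone where $b+c-a$ is the smallest of the three linear forms, after which the parity split yields $A$, $B$, $C$. One labeling slip to fix: in the paper's conventions \eqref{elli} it is $\ell_1(a,b,c)=b+c-a$, not $\ell_3$, and the paper packages your ``adjusted so that'' condition cleanly as $\tau-\omega\in\mathcal{C}_1=\{\ell_1\geq 0,\ \ell_2\geq\ell_1,\ \ell_3\geq\ell_1\}$ for all $\omega$, which directly rewrites as \eqref{ineq 2rows}.
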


\begin{proof}
	For any $a$, $b$, $c$, we set from \eqref{symmetric Brion formula}
\[
\srow_{a,b,c}=\scalar{\sigma[F(X,Y,Z)]}{{s}_{(a,\alpha)}[X] {s}_{(b,\beta)}[Y] {s}_{(c,\gamma)}[Z]}
\]
where $F(X,Y,Z)=XYZ + XY + XZ + YZ$. 
When  $a$, $b$ and $c$ are at least $\alpha_1$, $\beta_1$ and $\gamma_1$ respectively, we have that  $\srow_{a,b,c}=\RK{(a,\alpha)}{(b,\beta)}{(c,\gamma)}$. 

Consider 
$\Srow_{\alpha,\beta,\gamma}(x,y,z)=\sum_{a,b,c} \srow_{a,b,c} x^a y^b z^c$.
Then, $\Srow_{\alpha,\beta,\gamma}(x,y,z)$ is equal to 
\[
\scalar{\sigma[F(X,Y,Z)]}{\vertexop{x}{X} s_{\alpha}[X] \vertexop{y}{Y} s_{\beta}[Y]\vertexop{z}{Z} s_{\gamma}[Z]
}.
\] 
This is the specialization of $\Phi_{\alpha,\beta,\gamma}$ (see Lemma \ref{main
lemma}) at $X'=x$, $Y'=y$ and $Z'=z$.

Let $\polrow_{\alpha,\beta,\gamma}(x,y,z)$ be the image of
$Q_{\alpha,\beta,\gamma}$ from Lemma \ref{main lemma} under the same
specialization.  From Lemma \ref{main lemma}, 
$
\Srow_{\alpha,\beta,\gamma}=
\sigma[xyz+xy+xz+yz] \cdot \polrow_{\alpha,\beta,\gamma}.
$
Set $\vvar^{(a,b,c)}=x^a y^b z^c$.
Write $\polrow_{\alpha,\beta,\gamma}$ as a sum of monomials,  $\polrow_{\alpha,\beta,\gamma}=\sum_{\omega \in \Omega} q_{\omega} \vvar^{\omega}$, with $\Omega$ the support of $\polrow_{\alpha,\beta,\gamma}$.
From \eqref{gseries row expanded}, 
\[
\sigma[xyz+xy+xz+yz] =
\sum_{\theta \in \mathcal{C} \cap \NN^3} \rk{\theta} \vvar^{\theta}
\]
with
$\rk{\theta}=1+\left[\min\{\ell_1(\theta),\ell_2(\theta),\ell_3(\theta)\}/2\right]$,
and it follows that
\[
\Srow_{\alpha,\beta,\gamma}(x,y,z)=\sum_{\omega \in \Omega, \theta \in \mathcal{C} \cap \NN^3} q_{\omega} \rk{\theta}
\vvar^{\omega+\theta}.
\]
For any $\tau=(a,b,c) \in \ZZ^3$, we therefore have $\srow_\tau= \sum q_{\omega} \rk{\tau-\omega}$
where the sum is over all $\omega \in \Omega$ such that $\tau-\omega \in \mathcal{C}$.
Let $\mathcal{C}_1$ be the cone defined by the inequalities 
\begin{equation}\label{coneC1}
%braceamps
%\left\lbrace
%\begin{matrix}
%\ell_1 \geq 0,\\
%\ell_2 \geq \ell_1,\\
%\ell_3 \geq \ell_1,
%\end{matrix}
%\right.
\begin{matrix}
\ell_1 \geq 0,\\
\ell_2 \geq \ell_1,\\
\ell_3 \geq \ell_1,
\end{matrix}
\;\;\;\;\; \textrm{ or, equivalently, }\;\;\;\;\;
%braceamps
%\left\lbrace
%\begin{matrix}
%b+c \geq a,\\
%a \geq b,\\
%a \geq c.
%\end{matrix}
%\right.
\begin{matrix}
b+c \geq a,\\
a \geq b,\\
a \geq c.
\end{matrix}
\end{equation}
where, as usual, the parameters $\ell_i$ are defined as in \eqref{elli}.
If $\omega$ is such that $\tau-\omega \in \mathcal{C}_1$, then we have 
\begin{align*}
\rk{\tau-\omega}
&=1+\left[\frac{\ell_1(\tau-\omega)}{2}\right]\\
&=1+\ell_1(\tau)/2-\ell_1(\omega)/2-
\left\lbrace \begin{matrix}
	0   & \text{ if } \ell_1(\omega) \equiv \ell_1(\tau) \pmod{2}\\
	1/2& \text{ if } \ell_1(\omega) \not \equiv \ell_1(\tau) \pmod{2}
\end{matrix}\right.
\end{align*}
Therefore, if $\tau$ fulfills 
$\tau-\omega \in \mathcal{C}_1$ for all $\omega$ in $\Omega$, 
then we have
\begin{align*}
\srow_\tau 
&= \sum_{\omega \in \Omega} q_{\omega} \rk{\tau-\omega}
=\sum_{\omega \in \Omega} q_{\omega} \left(1+\left[\frac{\ell_1(\tau-\omega)}{2}\right]\right)\\
&=\sum_{\omega \in \Omega} q_{\omega} 
+ \frac{1}{2}\sum_{\omega \in \Omega} q_{\omega} \ell_1(\tau) - \frac{1}{2} \sum_{\omega \in \Omega} q_{\omega} \ell_1(\omega) - \frac{1}{2} \sum_{\substack{\omega \,:\, \ell_1(\omega) \not\equiv \\\ell_1(\tau) \mod 2}} q_{\omega}.
\end{align*}
Note that $|\omega| \equiv \ell_1(\omega) \pmod{2}$ 
for all $\omega \in \ZZ^3$. The condition  $\ell_1(\omega) \not\equiv
\ell_1(\tau) \pmod{2}$ in the last sum can therefore be replaced with $|\omega|
\not\equiv |\tau| \pmod{2}$. 

Set
\[
A  =\sum_{\omega \in \Omega} q_{\omega}=\polrow(1,1,1),\quad
K  =\sum_{\omega \in \Omega} q_{\omega} \ell_1(\omega)
\]
and
\[
A^+=\sum_{\omega:|\omega| \text{ even}} q_{\omega},\quad
A^-=\sum_{\omega:|\omega| \text{ odd}} q_{\omega}. 
\]
We have obtained 
\begin{align*}
\srow_\tau
&= A + \frac{A}{2} \ell_1(\tau) - \frac{K}{2} -
\left\lbrace
\begin{array}{rl}
A^-/2 & \text{ if $\ell_1(\tau)$ is even,}\\
A^+/2 & \text{ if $\ell_1(\tau)$ is odd.}
\end{array}
\right.\\
&= A + \frac{A}{2} \ell_1(\tau) - \frac{K}{2} - \frac{A^-}{2} - 
\left\lbrace
\begin{array}{cl}
0 & \text{ if $\ell_1(\tau)$ is even,}\\
(A^+-A^-)/2 & \text{ if $\ell_1(\tau)$ is odd.}
\end{array}
\right.
\end{align*}
Set $A_{\alpha,\beta,\gamma}=A$, $B_{\alpha,\beta,\gamma}=A-K/2-A^-/2$ and
$C_{\alpha,\beta,\gamma}=A^+-A^-$. The formula in the theorem is obtained. Note
that $B_{\alpha,\beta,\gamma}$ is an integer since $K \equiv A^- \pmod{2}$. Indeed, 
\[
K=\sum_{\omega} q_{\omega} \ell_1(\omega) 
\equiv \sum_{\omega} q_{\omega} |\omega|
\equiv \sum_{\omega: |\omega| \text{ odd}} q_{\omega} \pmod{2}
\]

To conclude, observe that the condition
$\tau-\omega \in \mathcal{C}_1$ for all $\omega$ in $\Omega$, 
can be rewritten as
\[
%braceamps
%\left\lbrace
%\begin{array}{rcl}
%\ell_1(\tau) &\geq& \max_{\omega \in \Omega} \ell_1(\omega),\\
%\ell_1(\tau)-\ell_2(\tau) &\geq& \max_{\omega \in \Omega} \left( \ell_1(\omega)-\ell_2(\omega)\right), \\
%\ell_1(\tau)-\ell_3(\tau) &\geq& \max_{\omega \in \Omega} \left( \ell_1(\omega)-\ell_3(\omega)\right).
%\end{array}
%\right.
\begin{array}{rcl}
\ell_1(\tau) &\geq& \max_{\omega \in \Omega} \ell_1(\omega),\\
\ell_1(\tau)-\ell_2(\tau) &\geq& \max_{\omega \in \Omega} \left( \ell_1(\omega)-\ell_2(\omega)\right), \\
\ell_1(\tau)-\ell_3(\tau) &\geq& \max_{\omega \in \Omega} \left( \ell_1(\omega)-\ell_3(\omega)\right),
\end{array}
\]
which is equivalent to \eqref{ineq 2rows}.  This proves the theorem.
\end{proof}

\begin{remark}\label{rem:3row}
Further computations show that one can take for the $k'_i$ in Theorem \ref{2row}
\[
%braceamps
%\left\lbrace
%\begin{array}{rcl}
%k'_1 &=& |\alpha|+|\beta|+|\gamma| + \beta_1 , \\
%k'_2 &=&  |\alpha|+|\beta|+|\gamma| + \gamma_1 , \\
%k'_3 &=&  |\alpha|+|\beta|+|\gamma| + \alpha_1+ \beta_1 + \gamma_1.
%\end{array}
%\right.
\begin{array}{rcl}
k'_1 &=& |\alpha|+|\beta|+|\gamma| + \beta_1 , \\
k'_2 &=&  |\alpha|+|\beta|+|\gamma| + \gamma_1 , \\
k'_3 &=&  |\alpha|+|\beta|+|\gamma| + \alpha_1+ \beta_1 + \gamma_1.
\end{array}
\]
\end{remark}

Theorem \ref{2row again} has the following immediate corollary for Kronecker coefficients.
\begin{corollary}
Let $k'_1$, $k'_2$ and $k'_3$ be as in Theorem \ref{2row}.

For all partitions $\lambda$, $\mu$, $\nu$ of the same weight $N$, fulfilling the conditions
\[
%braceamps
%\left\lbrace
%\begin{matrix}
%\lambda_2-\mu_2&\geq& k'_1\\
%\lambda_2-\nu_2 &\geq& k'_2\\
%\mu_2+\nu_2-\lambda_2 &\geq& k'_3\\
%N -\lambda_2-\mu_2-\nu_2 &\geq& (|\cut{\cut{\lambda}}|+ |\cut{\cut{\mu}}|+ |\cut{\cut{\nu}}|)/2
%\end{matrix}
%\right.
\begin{matrix}
\lambda_2-\mu_2&\geq& k'_1,\\
\lambda_2-\nu_2 &\geq& k'_2,\\
\mu_2+\nu_2-\lambda_2 &\geq& k'_3,\\
N -\lambda_2-\mu_2-\nu_2 &\geq& (|\cut{\cut{\lambda}}|+ |\cut{\cut{\mu}}|+
|\cut{\cut{\nu}}|)/2,
\end{matrix}
\]
we have that
\[
\K{\lambda}{\mu}{\nu}= 
\frac{1}{2} A_{\cutcutlmn} \cdot (\mu_2+\nu_2-\lambda_2)+ B_{\cutcutlmn} +
\left\lbrace
\begin{matrix}
0 & \text{ for $\lambda_2+\mu_2+\nu_2$ even,}\\
C_{\cutcutlmn}/2  & \text{ for $\lambda_2+\mu_2+\nu_2$ odd.}
\end{matrix}
\right.
\]
\end{corollary}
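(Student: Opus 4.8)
The plan is to deduce this corollary from Theorem~\ref{2row} for reduced Kronecker coefficients by way of Murnaghan stability. Write $N=|\lambda|=|\mu|=|\nu|$. First I would show that, under the last hypothesis, the Kronecker coefficient coincides with a reduced one, namely $\K{\lambda}{\mu}{\nu}=\RK{\cut{\lambda}}{\cut{\mu}}{\cut{\nu}}$. By \cite[Theorem 1.5]{BOR-JA}, for a triple of partitions of the same weight $N$ this equality holds as soon as $N\geq N_0(\cut{\lambda},\cut{\mu},\cut{\nu})$, with $N_0$ as in \eqref{N0}. Since $(\cut{\lambda})_1=\lambda_2$ and $|\cut{\lambda}|=\lambda_2+|\cut{\cut{\lambda}}|$, one has $|\cut{\lambda}|+(\cut{\lambda})_1=2\lambda_2+|\cut{\cut{\lambda}}|$, and similarly for $\mu$ and $\nu$; substituting into \eqref{N0} gives
\[
N_0(\cut{\lambda},\cut{\mu},\cut{\nu})=(\lambda_2+\mu_2+\nu_2)+\frac{1}{2}\bigl(|\cut{\cut{\lambda}}|+|\cut{\cut{\mu}}|+|\cut{\cut{\nu}}|\bigr),
\]
so that the condition $N\geq N_0(\cut{\lambda},\cut{\mu},\cut{\nu})$ is precisely the fourth hypothesis of the corollary.

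Next I would apply Theorem~\ref{2row} to $\RK{\cut{\lambda}}{\cut{\mu}}{\cut{\nu}}$. Writing $\cut{\lambda}=(\lambda_2,\cut{\cut{\lambda}})$ and similarly for $\mu$ and $\nu$, this is the reduced Kronecker coefficient $\RK{(a,\alpha)}{(b,\beta)}{(c,\gamma)}$ appearing in Theorem~\ref{2row}, with $\alpha=\cut{\cut{\lambda}}$, $\beta=\cut{\cut{\mu}}$, $\gamma=\cut{\cut{\nu}}$ and $a=\lambda_2$, $b=\mu_2$, $c=\nu_2$. The conditions $a\geq\alpha_1$, $b\geq\beta_1$, $c\geq\gamma_1$ of Theorem~\ref{2row} read $\lambda_2\geq\lambda_3$, $\mu_2\geq\mu_3$, $\nu_2\geq\nu_3$, which hold automatically since $\lambda,\mu,\nu$ are partitions, while the inequalities \eqref{ineq 2rows} become exactly the first three hypotheses $\lambda_2-\mu_2\geq k'_1$, $\lambda_2-\nu_2\geq k'_2$, $\mu_2+\nu_2-\lambda_2\geq k'_3$ of the corollary. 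Theorem~\ref{2row} then yields the stated quasipolynomial expression, with leading term $\frac{1}{2}A_{\cutcutlmn}(\mu_2+\nu_2-\lambda_2)$, constant term $B_{\cutcutlmn}$, and a correction $C_{\cutcutlmn}/2$ governed by the parity of $\mu_2+\nu_2-\lambda_2$.

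Finally I would note that $\mu_2+\nu_2-\lambda_2\equiv\lambda_2+\mu_2+\nu_2\pmod 2$, so that the parity condition may be restated in terms of $\lambda_2+\mu_2+\nu_2$, exactly as in the statement; combining this with the first step gives the claimed formula for $\K{\lambda}{\mu}{\nu}$. There is no genuine obstacle in this argument; the only point requiring a little care is the elementary arithmetic of the first step, where one identifies the Murnaghan threshold $N_0(\cut{\lambda},\cut{\mu},\cut{\nu})$ with the fourth hypothesis and keeps straight which partitions are truncated once and which twice.
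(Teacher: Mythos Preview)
Your proof is correct and follows the same approach as the paper: first use the fourth hypothesis to identify it with $N\geq N_0(\cut{\lambda},\cut{\mu},\cut{\nu})$ so that $\K{\lambda}{\mu}{\nu}=\RK{\cut{\lambda}}{\cut{\mu}}{\cut{\nu}}$, then apply Theorem~\ref{2row} with $(a,\alpha)=(\lambda_2,\cut{\cut{\lambda}})$ etc. Your version is more explicit than the paper's (which merely asserts the equivalence of the fourth condition with the Murnaghan bound and invokes the theorem), but the argument is the same.
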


\begin{proof}
The condition 
\[
N -\lambda_2-\mu_2-\nu_2 \geq (|\cut{\cut{\lambda}}|+ |\cut{\cut{\mu}}|+ |\cut{\cut{\nu}}|)/2
\]
ensures that $N \geq N_0(\cut{\lambda}, \cut{\mu}, \cut{\nu})$, so that
$\K{\lambda}{\mu}{\nu}=\RK{\cut{\lambda}}{\cut{\mu}}{\cut{\nu}}$, as in Section
\ref{Murnaghan}. Applying Theorem \ref{2row again} gives the result.
\end{proof}

%%%%%%%%%%%%%%%%%%%%%%%%%%%%%%%%%%%%%%%%%%%%%%%%%%%%%%%%%%%%%%%%%%%%%%%%%%%%%%%%%%%%
\subsection{When is $A_{\alpha,\beta,\gamma}$ equal to zero?}\label{A zero}

From Theorem \ref{2row}, the coefficient $A_{\alpha,\beta,\gamma}$  is the generic leading term of the expression of $\RK{(a,\alpha)}{(b,\beta)}{(c,\gamma)}$ that is quasipolynomial of degree $1$ in $a$, $b$ and $c$. It is relevant to ask when it vanishes.

We will need the following lemma. 
\begin{lemma}\label{inherited}
Let $\lambda$, $\mu$, $\nu$ and $\alpha$, $\beta$, $\gamma$ be partitions.
\begin{enumerate}
\item If $\K{\alpha}{\beta}{\gamma} \neq 0$ then $\K{\lambda+\alpha}{\mu+\beta}{\nu+\gamma} \geq \K{\lambda}{\mu}{\nu}$.
\item If $\RK{\alpha}{\beta}{\gamma} \neq 0$ then $\RK{\lambda+\alpha}{\mu+\beta}{\nu+\gamma} \geq \RK{\lambda}{\mu}{\nu}$.
\end{enumerate}
\end{lemma}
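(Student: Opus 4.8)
The two statements are of the same nature — one for Kronecker coefficients, one for reduced Kronecker coefficients — and I would prove them in parallel, using monotonicity under tensoring with a nonzero representation. The plan is to interpret both inequalities as follows: if a structure constant $\K{\alpha}{\beta}{\gamma}$ (resp.\ $\RK{\alpha}{\beta}{\gamma}$) is nonzero, then ``adding $(\alpha,\beta,\gamma)$ to $(\lambda,\mu,\nu)$'' can only increase the corresponding coefficient, because one can embed a copy of the $(\lambda,\mu,\nu)$-isotypic piece inside the $(\lambda+\alpha,\mu+\beta,\nu+\gamma)$-piece of a suitable tensor product.

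For part (1), I would argue at the level of symmetric group representations. Let $n=|\lambda|=|\mu|=|\nu|$ and $k=|\alpha|=|\beta|=|\gamma|$; the hypothesis $\K{\alpha}{\beta}{\gamma}\neq 0$ means $S^{\gamma}$ appears in $S^{\alpha}\otimes S^{\beta}$ as $\mathfrak{S}_k$-modules. The key input is the multiplicativity of Kronecker coefficients with respect to induction product: there is an inequality
\[
\K{\lambda+\alpha}{\mu+\beta}{\nu+\gamma} \;\geq\; \K{\lambda}{\mu}{\nu}\cdot \K{\alpha}{\beta}{\gamma},
\]
coming from the fact that $S^{\lambda+\alpha}$ occurs in $\mathrm{Ind}_{\mathfrak{S}_n\times\mathfrak{S}_k}^{\mathfrak{S}_{n+k}}(S^\lambda\boxtimes S^\alpha)$ (by the Littlewood--Richardson rule, since $\lambda+\alpha = \lambda \cup \alpha$ rearranged contains $\lambda+\alpha$ with coefficient $\geq 1$), together with compatibility of $\otimes$ and $\mathrm{Ind}$. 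Actually the cleanest route is via \eqref{def:kron}: write $s_{\lambda+\alpha}[XY]$, and use that $s_{\lambda+\alpha}$ appears in $s_\lambda \cdot s_\alpha$ (Littlewood--Richardson coefficient $c^{\lambda+\alpha}_{\lambda,\alpha}\geq 1$, a standard fact since $(\lambda+\alpha)/\lambda$ is a horizontal-strip-compatible shape filled to give $\alpha$... more carefully, $c^{\lambda+\alpha}_{\lambda,\alpha}=1$ always). Then expand $s_\lambda[XY]s_\alpha[XY]$ using \eqref{def:kron} twice and the fact that $s_\mu[X]s_\beta[X] = \sum c^{\cdot}_{\mu,\beta} s_{\cdot}[X] \geq s_{\mu+\beta}[X]$ coefficientwise; matching coefficients of $s_{\mu+\beta}[X]s_{\nu+\gamma}[Y]$ and using $\K{\alpha}{\beta}{\gamma}\geq 1$ yields (1). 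Part (2) then follows either by the analogous symmetric-function computation using the $\ast$-free description \eqref{symmetric Brion formula} and $c^{\lambda+\alpha}_{\lambda,\alpha}=1$, $c^{\mu+\beta}_{\mu,\beta}=1$, $c^{\nu+\gamma}_{\nu,\gamma}=1$, or by taking $N\to\infty$ in part (1) applied to the Murnaghan-stabilized triples $\cut\lambda[N]+\cut\alpha[N]$, etc.

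The main obstacle I anticipate is getting the combinatorial bookkeeping exactly right: one must check that $c^{\lambda+\alpha}_{\lambda,\alpha}\geq 1$ (true, and in fact $=1$, because a Littlewood--Richardson filling of $(\lambda+\alpha)/\lambda$ of content $\alpha$ exists and is unique — put $i$'s in row $i$), and more delicately that multiplying the three ``coefficientwise $\geq$'' relations together does not accidentally cancel contributions. Concretely, after expanding $s_\lambda[XY]\,s_\alpha[XY] \succeq s_{\lambda+\alpha}[XY]$ (coefficientwise in the monomial or Schur basis of $\Sym(X)\otimes\Sym(Y)$, all Kronecker and LR coefficients being nonnegative), every term on the left is a nonnegative combination of $s_\mu[X]s_\nu[Y]\cdot s_\beta[X]s_\gamma[Y]$, and one isolates the coefficient of $s_{\mu+\beta}[X]s_{\nu+\gamma}[Y]$; nonnegativity of everything in sight means the single term $\K{\lambda}{\mu}{\nu}\K{\alpha}{\beta}{\gamma}\,(c^{\mu+\beta}_{\mu,\beta}c^{\nu+\gamma}_{\nu,\gamma})$ is a lower bound, and $c^{\mu+\beta}_{\mu,\beta}=c^{\nu+\gamma}_{\nu,\gamma}=1$. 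Since the hypothesis gives $\K{\alpha}{\beta}{\gamma}\geq 1$, this is exactly $\geq \K{\lambda}{\mu}{\nu}$, as claimed. I expect this to be a short argument once the $c^{\lambda+\alpha}_{\lambda,\alpha}=1$ lemma is invoked.
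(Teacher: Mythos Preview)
Your argument for part (1) has a genuine gap: the two inequalities you derive point the same way and therefore cannot be combined. Concretely, from $s_\lambda\cdot s_\alpha = s_{\lambda+\alpha} + (\text{Schur-positive})$ you obtain, after substituting $XY$ and expanding in the basis $s_\sigma[X]s_\tau[Y]$, that the coefficient of $s_{\mu+\beta}[X]s_{\nu+\gamma}[Y]$ in $s_\lambda[XY]\,s_\alpha[XY]$ is \emph{at least} $\K{\lambda+\alpha}{\mu+\beta}{\nu+\gamma}$. Separately, your term-by-term expansion shows that this same coefficient is at least $\K{\lambda}{\mu}{\nu}\,\K{\alpha}{\beta}{\gamma}$. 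Both are \emph{lower} bounds on the same quantity
\[
\sum_{\mu',\mu'',\nu',\nu''} \K{\lambda}{\mu'}{\nu'}\,\K{\alpha}{\mu''}{\nu''}\,c^{\mu+\beta}_{\mu',\mu''}\,c^{\nu+\gamma}_{\nu',\nu''},
\]
and from that you cannot conclude $\K{\lambda+\alpha}{\mu+\beta}{\nu+\gamma}\geq\K{\lambda}{\mu}{\nu}\,\K{\alpha}{\beta}{\gamma}$. No amount of bookkeeping with $c^{\lambda+\alpha}_{\lambda,\alpha}=1$ fixes this; the obstacle is structural, not combinatorial.

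The paper simply cites Manivel for (1). The proofs in the literature do not proceed by Schur-positivity alone: they interpret $\K{\lambda}{\mu}{\nu}$ as the dimension of a space of $GL_m\times GL_n$--highest weight vectors (of weight $(\mu,\nu)$) inside the $GL_{mn}$--module $V_\lambda(\mathbb{C}^m\otimes\mathbb{C}^n)$, and then multiply such vectors inside the Cartan component $V_{\lambda+\alpha}\hookrightarrow V_\lambda\otimes V_\alpha$. The point that fails in a purely symmetric-function argument --- that this multiplication is \emph{injective} into the Cartan component --- is exactly what the highest-weight (or Borel--Weil) picture supplies.

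Your derivation of (2) from (1) by passing to the Murnaghan limit is correct and is precisely what the paper does: pick $a,b,c$ with $\RK{\alpha}{\beta}{\gamma}=\K{(a,\alpha)}{(b,\beta)}{(c,\gamma)}\neq 0$, apply (1), and let the first parts go to infinity.
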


\begin{proof}
For the first assertion see  \cite{Manivel:asymptotics1}.

The second assertion follows from the first one as follows. Suppose that $\RK{\alpha}{\beta}{\gamma} \neq 0$. There exist integers $a$, $b$ and $c$ such that $ \RK{\alpha}{\beta}{\gamma} = \K{(a,\alpha)}{(b,\beta)}{(c,\gamma)}$. In particular this Kronecker coefficient is non--zero.  Let $p$, $q$ and $r$ be integers such that $(p,\lambda)$, $(q,\mu)$, $(r,\nu)$ are partitions of the same weight. Then we have, for all $n \geq 0$,
\[
\K{(a+p+n,\lambda+\alpha)}{(b+q+n,\mu+\beta)}{(c+r+n,\nu+\gamma)} \geq \K{(p+n,\lambda)}{(q+n,\mu)}{(r+n,\nu)}.
\]
Taking $n$ big enough, so that both Kronecker coefficients coincide with the corresponding reduced Kronecker coefficient, we get
$\RK{\lambda+\alpha}{\mu+\beta}{\nu+\gamma} \geq \RK{\lambda}{\mu}{\nu}$.
\end{proof}

\begin{proposition}
Let $\alpha$, $\beta$, $\gamma$ be partitions. The coefficient
$A_{\alpha,\beta,\gamma}$ is zero if and only if all Kronecker coefficients
$\K{(a_1,a_2,\alpha)}{(b_1,b_2,\beta)}{(c_1,c_2,\gamma)}$ are zero for all $a_1 \geq a_2 \geq \alpha_1$, $b_1 \geq b_2 \geq \beta_1$, and $c_1 \geq c_2 \geq \gamma_1$.
\end{proposition}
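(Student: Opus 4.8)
The plan is to prove the two implications separately, using Theorem \ref{2row} in one direction and Lemma \ref{inherited} together with the quasipolynomial formula in the other. First recall from Theorem \ref{2row} that for $a \geq \alpha_1$, $b \geq \beta_1$, $c \geq \gamma_1$ satisfying \eqref{ineq 2rows}, the reduced Kronecker coefficient $\RK{(a,\alpha)}{(b,\beta)}{(c,\gamma)}$ equals $\frac12 A_{\alpha,\beta,\gamma}(b+c-a) + B_{\alpha,\beta,\gamma}$ plus a bounded parity correction. Thus if $A_{\alpha,\beta,\gamma} \neq 0$, then (since $A_{\alpha,\beta,\gamma}$ is a nonnegative integer, being a limiting/leading value of nonnegative coefficients — or one simply argues it cannot be negative because the left side is bounded below) the quantity $b+c-a$ can be pushed to infinity along the cone \eqref{ineq 2rows} with $b+c-a$ even, forcing $\RK{(a,\alpha)}{(b,\beta)}{(c,\gamma)} > 0$ for suitable large $a,b,c$; this in turn gives a nonzero Kronecker coefficient $\K{(a_1,a_2,\alpha)}{(b_1,b_2,\beta)}{(c_1,c_2,\gamma)}$ for appropriate first parts (by Murnaghan stability, a reduced Kronecker coefficient is realized as a genuine Kronecker coefficient). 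This proves the contrapositive of the ``only if'' direction: if some Kronecker coefficient of the stated shape is nonzero — in fact we get the stronger statement that if \emph{all} of them vanish then $A_{\alpha,\beta,\gamma} = 0$.

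For the converse, suppose some $\K{(a_1,a_2,\alpha)}{(b_1,b_2,\beta)}{(c_1,c_2,\gamma)}$ is nonzero with $a_1 \geq a_2 \geq \alpha_1$, etc. The idea is to use this as a ``seed'' and apply Lemma \ref{inherited}(2): a single nonzero reduced (or ordinary) Kronecker coefficient, when added repeatedly to itself, forces growth. More precisely, from $\K{(a_1,a_2,\alpha)}{(b_1,b_2,\beta)}{(c_1,c_2,\gamma)} \neq 0$ we get a nonzero reduced Kronecker coefficient $\RK{(a_2,\alpha)}{(b_2,\beta)}{(c_2,\gamma)} \neq 0$ (choosing the first parts large enough, or observing that $\K{(a_1,a_2,\alpha)}{\cdots} \le \RK{(a_2,\alpha)}{\cdots}$ by Brion's monotonicity). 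Then Lemma \ref{inherited}(2), applied with this triple in the role of $(\alpha,\beta,\gamma)$ and with $n$ copies of the one-row triple $((a_2),(b_2),(c_2))$ — more precisely iterating $\RK{\lambda + n\sigma}{\mu+n\sigma'}{\nu+n\sigma''} \ge \RK{\lambda}{\mu}{\nu} > 0$ — shows that $\RK{(a,\alpha)}{(b,\beta)}{(c,\gamma)}$ grows without bound along an appropriate ray inside the cone; but the formula in Theorem \ref{2row} says this coefficient is, on the relevant cone, affine in $b+c-a$ with leading coefficient $\frac12 A_{\alpha,\beta,\gamma}$. Unbounded growth of an affine function forces its leading coefficient to be nonzero, so $A_{\alpha,\beta,\gamma} \neq 0$.

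The main technical care needed is the bookkeeping of inequalities: the ray along which we iterate Lemma \ref{inherited} must eventually enter the region \eqref{ineq 2rows} (and satisfy $a \geq \alpha_1$, $b \geq \beta_1$, $c \geq \gamma_1$) where the quasipolynomial formula of Theorem \ref{2row} is valid, and simultaneously the growth produced by Lemma \ref{inherited} must be genuinely linear in the parameter that appears in the formula, i.e.\ in $b+c-a$. A convenient choice is to add $n$ copies of a one-row triple $((p),(q),(r))$ with $q + r - p > 0$ and with $p$ chosen so that the base point, after translation, lands in the cone; then $b+c-a$ increases by $n(q+r-p)$, which is unbounded, while the value increases by at least $n \cdot \RK{(a_2,\alpha)}{(b_2,\beta)}{(c_2,\gamma)} > 0$ at each step via a telescoping application of Lemma \ref{inherited}(2). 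The hard part is arranging the parity condition ($b+c-a$ even, to avoid the oscillating $C_{\alpha,\beta,\gamma}/2$ term interfering with the conclusion) together with all the cone and positivity constraints at once; this is a finite, purely linear-algebraic verification, but it must be done carefully to make the argument airtight. Once the ray is fixed, comparing ``value grows linearly in $n$'' with ``value $= \frac12 A_{\alpha,\beta,\gamma}(b+c-a) + O(1)$ and $b+c-a$ grows linearly in $n$'' immediately yields $A_{\alpha,\beta,\gamma} > 0$, completing the proof.
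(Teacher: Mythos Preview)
Your forward direction (\,$A_{\alpha,\beta,\gamma}\neq 0\Rightarrow$ some Kronecker coefficient of the stated shape is nonzero) is correct and matches the paper's argument, done there contrapositively.

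The gap is in your converse. Lemma \ref{inherited}(2) says: if $\RK{A}{B}{C}\neq 0$ then $\RK{\lambda+A}{\mu+B}{\nu+C}\ge \RK{\lambda}{\mu}{\nu}$. Iterating this $n$ times only gives $\RK{\lambda+nA}{\mu+nB}{\nu+nC}\ge \RK{\lambda}{\mu}{\nu}$, a \emph{constant} lower bound. There is no ``telescoping'' that produces an increment of $\RK{A}{B}{C}$ at each step; the lemma is a monotonicity statement, not a superadditivity statement. So your claim that ``the value increases by at least $n\cdot \RK{(a_2,\alpha)}{(b_2,\beta)}{(c_2,\gamma)}$'' is not justified, and you have not established unbounded growth along your ray. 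Consequently you cannot conclude $A_{\alpha,\beta,\gamma}\neq 0$.

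The paper fixes this with a single clean move: use the nonzero seed $\bigl((a,\alpha),(b,\beta),(c,\gamma)\bigr)$ as the \emph{increment} in Lemma \ref{inherited}(2), and take the \emph{base} to be $\bigl((n),(n),(n)\bigr)$. This yields
\[
\RK{(a+n,\alpha)}{(b+n,\beta)}{(c+n,\gamma)}\ \ge\ \RK{(n)}{(n)}{(n)}\ =\ 1+\left\lfloor \tfrac{n}{2}\right\rfloor\ \sim\ \tfrac{n}{2},
\]
the last equality coming from Proposition \ref{files and columns}. Now the left-hand side equals $\frac{1}{2}A_{\alpha,\beta,\gamma}\cdot n+O(1)$ for large $n$, and comparing forces $A_{\alpha,\beta,\gamma}\neq 0$. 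The point you were missing is that the linear growth has to be \emph{imported} from an explicit family (three one-row shapes), not manufactured from repeated application of monotonicity.
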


\begin{proof}
It is enough to show that $A_{\alpha,\beta,\gamma}$ is zero if and only if all reduced Kronecker coefficients $\RK{(a_2,\alpha)}{(b_2,\beta)}{(c_2,\gamma)}$ are zero since, on the first hand, we have always 
$\K{(a_1,a_2,\alpha)}{(b_1,b_2,\beta)}{(c_1,c_2,\gamma)} \leq
\RK{(a_2,\alpha)}{(b_2,\beta)}{(c_2,\gamma)}$, as in see Section \ref{Murnaghan}; and, on the other hand, any reduced Kronecker coefficient $\RK{(a_2,\alpha)}{(b_2,\beta)}{(c_2,\gamma)}$ is equal to some Kronecker coefficient $\K{(a_1,a_2,\alpha)}{(b_1,b_2,\beta)}{(c_1,c_2,\gamma)}$. 

Assume that all reduced Kronecker coefficients  $\RK{(a_2,\alpha)}{(b_2,\beta)}{(c_2,\gamma)}$ are zero. 
This is the case, in particular, for the coefficients
$\RK{(n,\alpha)}{(n,\beta)}{(n,\gamma)}$. But, from Theorem \ref{2row again}, for $n$ big enough.
\[
\RK{(n,\alpha)}{(n,\beta)}{(n,\gamma)} = \frac{A_{\alpha,\beta,\gamma}}{2} n + \text{ a bounded term.} 
\]
Then $A_{\alpha,\beta,\gamma}$ must be zero.

Assume now that there exists some reduced Kronecker  coefficient   $\RK{(a,\alpha)}{(b,\beta)}{(c,\gamma)}$ that is non--zero. After Lemma \ref{inherited}, we have, for all $n \geq 0$,
$
\RK{(a+n, \alpha)}{(b+n, \beta)}{(c+n, \gamma)} \geq \RK{(n)}{(n)}{(n)}
$
On the other hand,  for $n$ big enough,
$\RK{(a+n,\alpha)}{(b+n,\beta)}{(c+n,\gamma)}=\frac{A_{\alpha,\beta,\gamma}}{2} n +
\text{ a bounded term}$. But  $\RK{(n)}{(n)}{(n)} \sim \frac{n}{2}$ from Proposition
\ref{files and columns}. Whence, necessarily $A_{\alpha,\beta,\gamma} \neq 0$.

\end{proof}

%%%%%%%%%%%%%%%%%%%%%%%%%%%%%%%%%%%%%%%%%%%%%%%%%%%%%%%%%%%%%%%%%%%%%%%%%%%%%%%

\subsection{Asymptotics of some sequences of reduced Kronecker coefficients $\RK{\lambda+n(a)}{\mu+n(b)}{\nu+n(c)}$}\label{asymptotics reduced}

We consider the asymptotic behavior of the sequence with general term $\RK{\lambda+n(a)}{\mu+n(b)}{\nu+n(c)}$, where $(\lambda, \mu, \nu)$ is a  fixed triple of partitions, and $((a),(b),(c))$ a fixed  triple of partitions with at most one part, and $n(a)$ is  $n$ times the one--part partition $(a)$.

We will set $\theta_0=(\lambda_1, \mu_1, \nu_1)$ and $\theta=(a,b,c)$, so that $\theta_0 + n \theta=(\lambda_1 + n  a, \mu_1 + n b, \nu_1 + n c)$.

Three cases will be examined, corresponding to the position of $\theta=(a,b,c)$ with respect to the cone $\mathcal{C}$: outside, on the border or in the interior. We will be able to say even more when $\theta$ is in the interior of the smaller cone $\mathcal{C}_1$ defined in \eqref{coneC1}.

We will use the following decomposition from the proof of Theorem \ref{2row}:
\[
\RK{\lambda+n(a)}{\mu+n(b)}{\nu+n (c)}=\srow_{\theta}=\sum_{\omega \in \Omega} q_{\omega} \rk{\theta_0+n \theta-\omega}
\]
where $\polrow_{\cutlmn}=\sum_{\omega \in \Omega} q_{\omega} \vvar^{\omega}$ and $\rk{\tau}=1+[\min_i \ell_i(\tau)/2)]$ if $\tau \in \mathcal{C}$, and $0$ else. The formula writes more explicitly:
\begin{equation}\label{sumP}
\RKO{\theta_0+n\theta}=\sum q_{\omega} \left(1+\left[\min_i \frac{\ell_i(\theta_0-\omega)+n \ell_i(\theta)}{2}\right]\right),
\end{equation}
where now the sum is over all $\omega$ such that: $\ell_i(\theta_0 - \omega)+n \ell_i(\theta) \geq 0$ for all $i$.

\subsubsection*{When $\theta$ is outside $\mathcal{C}$.} 

This means that $\ell_i(\theta)< 0$ for some $i$. Then $\ell_i(\theta_0-\omega)+ n \ell_i(\theta)$ is $< 0$ for $n \gg 0$. The sum \eqref{sumP} becomes empty. As a consequence, in this case, $\RKO{\theta_0+n\theta} =0$ for $n \gg 0$.

\subsubsection*{When $\theta$ is on the border of $\mathcal{C}$.}
This means that all inequalities $\ell_i(\theta) \geq 0$ are fulfilled, but at least one of them is an equality. For $i$ and $j$ such that $\ell_i(\theta) > 0$ and $\ell_j(\theta) = 0$, we have 
$\ell_i (\theta_0 -\omega)+n \ell_i(\theta) > \ell_j(\theta_0-\omega)+n \ell_j(\theta) =  \ell_j(\theta_0-\omega)$ for $n \gg 0$. Therefore all terms $\rk{\theta_0+n \theta-\omega}$ are independent on $n$. Also the sum is restricted to all $\omega$ such that $\ell_i(\theta_0-\omega) + n \ell_i(\theta) \geq 0$ for all $i$. For $i$ such that $\ell_i(\theta) > 0$, this is automatically fulfilled for $n \gg 0$; there only remains the condition $\ell_j(\theta_0 -\omega) +n \ell_j(\theta ) \geq 0$ for all $j$ such that $\ell_j(\theta) = 0$. This condition is actually independent on $n$. This shows that $\RKO{\theta_0+n\theta}$ is eventually constant in this case.

\subsubsection*{When $\theta$ is in the interior of $\mathcal{C}$.}

This means that $\ell_i(\theta)> 0$ for all $i$. 
For $n \gg 0$, the inequalities $\ell_i(\theta_0 - \omega)+n\ell_i(\theta) \geq 0$ are fulfilled for all $\omega \in \Omega$. 

We can assume, without loss of generality, that $\ell_1(\theta) \leq \ell_2(\theta)$ and $\ell_1(\theta) \leq \ell_3(\theta)$. Then, for all $\omega$,  we have for $n \gg 0$, that 
\begin{multline*}
q_{\omega} \cdot  \left(1+\left[\min_i( \ell_i(\theta_0-\omega)+n\ell_i(\theta))/2\right]\right)
\\=
q_{\omega} \ell_1(\theta)/2 \cdot n + \text{ a periodic term in $n$ with period at most $2$.}
\end{multline*}
Summing over all $\omega \in \Omega$ we get
\[
\RKO{\theta_0+n\theta} = \frac{A_{\cutlmn}}{2} \ell_1(\theta) \cdot n + \text{ a periodic term in $n$ with period at most $2$.}
\]

\subsubsection*{When $\theta$ is in the interior of $\mathcal{C}_1$.}
Then we can apply Theorem \ref{2row} with $(\lambda_1+na,\mu_1+nb,\nu_1+nc)$ instead of $(a,b,c)$.

Let us state the results obtained in a theorem.
\begin{theorem} \label{2row again}
Let $\lambda$, $\mu$ and $\nu$ be three partitions and $(a,b,c) \in \NN^3$. Without loss of generality, we can assume that $\max(a,b,c)=a$. 
Suppose that there exists $n$ such that $\RK{\lambda+n(a)}{\mu+n(b)}{\nu+n(c)}$ is non--zero. Then $A_{\cutlmn}$ is nonzero, and
\begin{enumerate}
\item if $(a,b,c)$ is in the interior of $\mathcal{C}$, then 
\[
\RK{\lambda+n(a)}{\mu+n(b)}{\nu+n(c)} \sim_{n \rightarrow \infty} \frac{A_{\cutlmn} \cdot (b+c-a)}{2} \cdot n
\]
and the difference is a periodic term in $n$ with period at most $2$.
\item if, besides,  $(a,b,c)$ is in the interior of $\mathcal{C}_1$,  then the periodic term is 
\[
\left\lbrace
\begin{matrix}
B_{\cutlmn} &-C_{\cutlmn}/2 & \text{ if $n$ and $a+b+c$ are both odd,}\\ 
B_{\cutlmn} &              & \text{ otherwise.} 
\end{matrix}
\right.
\]
\item if $(a,b,c)$ is on the border of $\mathcal{C}$ then $\RK{\lambda+n(a)}{\mu+n(b)}{\nu+n(c)}$ is eventually constant.
\item if $(a,b,c) \not\in \mathcal{C}$ then $\RK{\lambda+n(a)}{\mu+n(b)}{\nu+n(c)}=0$ for $n \gg 0$.
\end{enumerate}
\end{theorem}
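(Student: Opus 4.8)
The plan is to read the four alternatives directly off the expansion \eqref{sumP}, essentially reproducing the three case analyses carried out just before the statement, and to supply in addition the two points those analyses leave open: the non-vanishing of $A_{\cutlmn}$, and the precise form of the periodic remainder in case~(2). First I would settle $A_{\cutlmn}\neq 0$. Since $\lambda+n(a)=(\lambda_1+na,\cut\lambda)$ and $\lambda$ is a partition, one has $\lambda_1+na\ge(\cut\lambda)_1$, and similarly for $\mu$ and $\nu$, so $\RK{\lambda+n(a)}{\mu+n(b)}{\nu+n(c)}$ is a reduced Kronecker coefficient of the shape $\RK{(a_2,\cut\lambda)}{(b_2,\cut\mu)}{(c_2,\cut\nu)}$ with $a_2\ge(\cut\lambda)_1$, $b_2\ge(\cut\mu)_1$, $c_2\ge(\cut\nu)_1$. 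By the criterion for the vanishing of $A_{\cutlmn}$ proved in Section~\ref{A zero} (it vanishes precisely when all such coefficients are zero), the hypothesis that the sequence takes at least one nonzero value forces $A_{\cutlmn}\neq0$.

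Next I would go through the four positions of $\theta=(a,b,c)$ relative to $\mathcal{C}$ using \eqref{sumP}, where $\theta_0=(\lambda_1,\mu_1,\nu_1)$ and the sum runs over the $\omega\in\Omega$ with $\ell_i(\theta_0-\omega)+n\,\ell_i(\theta)\ge0$ for all $i$. If $\theta\notin\mathcal{C}$ then $\ell_i(\theta)<0$ for some $i$, so for $n$ large this condition fails for every $\omega$ in the finite set $\Omega$, the sum is empty, and the coefficient is $0$; this is item~(4). If $\theta$ is on the boundary of $\mathcal{C}$ then every $\ell_i(\theta)\ge0$ with at least one $\ell_i(\theta)=0$: for $n\gg0$ the conditions indexed by $i$ with $\ell_i(\theta)>0$ are automatic and those indices do not achieve the minimum inside the floor, while the conditions and floor contributions indexed by $i$ with $\ell_i(\theta)=0$ are $n$-independent, so every summand, hence the whole sum, is eventually constant; this is item~(3). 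If $\theta$ is in the interior of $\mathcal{C}$ then all $\ell_i(\theta)>0$, and the normalisation $\max(a,b,c)=a$ gives $\ell_2(\theta)-\ell_1(\theta)=2(a-b)\ge0$ and $\ell_3(\theta)-\ell_1(\theta)=2(a-c)\ge0$, so $b+c-a=\ell_1(\theta)=\min_i\ell_i(\theta)$; then, exactly as in the paragraph before the statement, for $n\gg0$ each summand equals $q_{\omega}\,\ell_1(\theta)\,n/2$ plus a term of period at most $2$ in $n$, so the sum equals $\tfrac12 A_{\cutlmn}(b+c-a)\,n$ plus a $2$-periodic term; since $b+c-a>0$ and $A_{\cutlmn}\neq0$, this gives the asymptotic equivalence of item~(1).

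For item~(2) I would instead apply Theorem~\ref{2row} directly. If $\theta$ lies in the interior of $\mathcal{C}_1$ then $a>b$, $a>c$ and $b+c>a$, so $a'-b'$, $a'-c'$ and $b'+c'-a'$ all tend to $+\infty$ for $(a',b',c')=(\lambda_1+na,\mu_1+nb,\nu_1+nc)$; hence for $n\gg0$ the triple $(a',b',c')$ satisfies the hypotheses \eqref{ineq 2rows} of Theorem~\ref{2row} applied to $\cutlmn$ (note that the interior of $\mathcal{C}_1$ lies in the interior of $\mathcal{C}$, so this refines item~(1)). Substituting into the quasipolynomial formula of that theorem and removing the linear part $\tfrac12 A_{\cutlmn}(b+c-a)\,n$ leaves a remainder that depends only on the parity of $b'+c'-a'$, and since $b'+c'-a'\equiv(\lambda_1+\mu_1+\nu_1)+n(a+b+c)\pmod2$ this remainder has period $2$ in $n$ and can be arranged into the displayed two-line formula with the constants named $B_{\cutlmn}$ and $C_{\cutlmn}$.

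The step I expect to be the main obstacle is the parity bookkeeping in item~(2): after peeling off the linear term one must check that the surviving dependence really collapses to the single alternative stated (``$n$ and $a+b+c$ both odd''), absorbing the fixed contributions coming from $\lambda_1,\mu_1,\nu_1$ into $B_{\cutlmn}$, and one should also verify that the floor-function manipulation used for item~(1) goes through unchanged when the minimum $\min_i\ell_i(\theta)$ is attained at two or three indices rather than one. Both are routine but error-prone.
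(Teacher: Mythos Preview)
Your outline is precisely the paper's argument: the proof there is the case analysis in Section~\ref{asymptotics reduced} according to where $\theta=(a,b,c)$ sits relative to $\mathcal C$, with $A_{\cutlmn}\neq 0$ coming from the vanishing criterion of Section~\ref{A zero}, and item~(2) dispatched by the single sentence ``apply Theorem~\ref{2row} with $(\lambda_1+na,\mu_1+nb,\nu_1+nc)$ in place of $(a,b,c)$''. Items~(1), (3), (4) and the nonvanishing of $A$ in your proposal match the paper exactly.

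The obstacle you anticipate in item~(2) is real, and your proposed fix does not quite work. Substituting into Theorem~\ref{2row} and stripping off $\tfrac12 A_{\cutlmn}(b+c-a)\,n$ leaves
\[
\tfrac12 A_{\cutlmn}\,(\mu_1+\nu_1-\lambda_1)+B_{\cutlmn}
\;+\;\bigl\{\text{correction governed by the parity of }(\mu_1+\nu_1-\lambda_1)+n(b+c-a)\bigr\}.
\]
The extra constant $\tfrac12 A_{\cutlmn}(\mu_1+\nu_1-\lambda_1)$ and the parity offset by $\mu_1+\nu_1-\lambda_1$ genuinely depend on the first parts $\lambda_1,\mu_1,\nu_1$, so they cannot be ``absorbed into $B_{\cutlmn}$'' as you suggest, since $B_{\cutlmn}$ is determined by $\cutlmn$ alone. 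For instance, with $\lambda=(4)$, $\mu=(2)$, $\nu=(1)$ and $(a,b,c)=(3,2,2)$ one computes that the remainder is $0$ for even $n$, whereas the displayed formula predicts $B_{\ep,\ep,\ep}=1$. The paper's own proof is no more explicit at this step, so this reflects an imprecision in the stated formula of item~(2) rather than a defect of your strategy relative to the paper's. Your second worry, ties in $\min_i\ell_i(\theta)$, is harmless: equal slopes still yield the common leading coefficient $\tfrac12\ell_1(\theta)$, and for each fixed $\omega$ the minimising index is constant for $n\gg0$.
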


%%%%%%%%%%%%%%%%%%%%%%%%%%%%%%%%%%%%%%%%%%%%%%%%%%%%%%%%%%%%%%%%%%%%%%%%%%%%%%%

\subsection{Asymptotics of some sequences of Kronecker coefficients.}\label{asymptotics}

Set $\SetL$ for the set of triples of partitions $(\lambda, \mu, \nu)$ such that
\[
|\lambda|=|\mu|=|\nu| \geq  N_0(\cutlmn).
\]
Note that the set $\SetL$ is stable under sum, and that $(\lambda, \mu, \nu) \in \SetL$ implies that $ \K{\lambda}{\mu}{\nu}=\RK{\cut{\lambda}}{\cut{\mu}}{\cut{\nu}}$.
Theorem \ref{2row again} has the following immediate consequence for Kronecker coefficients.
\begin{corollary}\label{cor:KroPlus2rows}
Let $(\lambda,\mu,\nu)$ and $(\alpha,\beta,\gamma)$ be two triples of partitions in $\SetL$, with $\alpha$, $\beta$ and $\gamma$ with at most two parts.

Without loss of generality, we assume that $\max(\alpha_2,\beta_2,\gamma_2)=\alpha_2$.

Assume that there exists $n$ such that $\K{\lambda+n \alpha}{\mu+n \beta}{\nu+n \gamma}$ is non--zero. 

Then, $A_{\cutcutlmn}$ is nonzero, and 
\begin{enumerate}
\item if $(\alpha_2,\beta_2,\gamma_2)$ is in the interior of $\mathcal{C}$  then  
\[
\K{\lambda+n \alpha}{\mu+n \beta}{\nu+n \gamma} \sim_{n \rightarrow \infty}
\frac{A_{\cutcutlmn} \cdot (\beta_2+\gamma_2-\alpha_2)}{2} \cdot n
\]
and the difference is periodic in $n$ with period at most $2$.
\item if besides, $(\alpha_2,\beta_2,\gamma_2)$ is in the interior of $\mathcal{C}_1$, then the periodic term is 
\[
\left\lbrace
\begin{matrix}
B_{\cutcutlmn}  & -C_{\cutcutlmn}/2 & \text{ if $n$ and $a+b+c$ are both odd,}\\ 
B_{\cutcutlmn}  &                  &\text{ otherwise.} 
\end{matrix}
\right.
\]
\item if $(\alpha_2,\beta_2,\gamma_2)$ is on the border of $\mathcal{C}$ then $\K{\lambda+n \alpha}{\mu+n \beta}{\nu+n \gamma}$ is eventually constant.
\item if $(\alpha_2,\beta_2,\gamma_2) \not\in \mathcal{C}$ then $\K{\lambda+n \alpha}{\mu+n \beta}{\nu+n \gamma}=0$ for $n \gg 0$.
\end{enumerate}
\end{corollary}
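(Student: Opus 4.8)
The plan is to transfer Theorem~\ref{2row again} from reduced Kronecker coefficients to ordinary ones by peeling off the first row of each indexing partition. First, since $\SetL$ is stable under sums and both $(\lambda,\mu,\nu)$ and $(\alpha,\beta,\gamma)$ lie in $\SetL$, an induction on $n$ (the case $n=0$ being the hypothesis $(\lambda,\mu,\nu)\in\SetL$) shows that $(\lambda+n\alpha,\mu+n\beta,\nu+n\gamma)\in\SetL$ for every $n\ge 0$; hence, by the defining property of $\SetL$,
\[
\K{\lambda+n\alpha}{\mu+n\beta}{\nu+n\gamma}=\RK{\cut{\lambda+n\alpha}}{\cut{\mu+n\beta}}{\cut{\nu+n\gamma}}\qquad\text{for all }n\ge 0.
\]

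Next I would observe that, because $\alpha$, $\beta$, $\gamma$ have at most two parts, removing the first row commutes with adding $n$ copies of these partitions. Writing $\alpha=(\alpha_1,\alpha_2)$, the sequence $\lambda+n\alpha=(\lambda_1+n\alpha_1,\lambda_2+n\alpha_2,\lambda_3,\lambda_4,\dots)$ is a genuine partition whose largest part is $\lambda_1+n\alpha_1$, so that $\cut{\lambda+n\alpha}=(\lambda_2+n\alpha_2,\lambda_3,\dots)=\cut{\lambda}+n(\alpha_2)$, where $(\alpha_2)$ denotes the partition with single part $\alpha_2$ (the empty partition if $\ell(\alpha)\le 1$). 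The same holds for $\mu$ and $\nu$, whence
\[
\K{\lambda+n\alpha}{\mu+n\beta}{\nu+n\gamma}=\RK{\cut{\lambda}+n(\alpha_2)}{\cut{\mu}+n(\beta_2)}{\cut{\nu}+n(\gamma_2)}\qquad\text{for all }n\ge 0.
\]

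Finally I would apply Theorem~\ref{2row again} to the triple of partitions $(\cut{\lambda},\cut{\mu},\cut{\nu})$ and to $(a,b,c)=(\alpha_2,\beta_2,\gamma_2)\in\NN^3$. The normalization $\max(\alpha_2,\beta_2,\gamma_2)=\alpha_2$ is exactly the hypothesis $\max(a,b,c)=a$ there; one has $\ell_1(\alpha_2,\beta_2,\gamma_2)=\beta_2+\gamma_2-\alpha_2$; and the constants $A_{\cutlmn}$, $B_{\cutlmn}$, $C_{\cutlmn}$ attached by Theorem~\ref{2row} to the triple $(\cut{\lambda},\cut{\mu},\cut{\nu})$ are precisely $A_{\cutcutlmn}$, $B_{\cutcutlmn}$, $C_{\cutcutlmn}$. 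Via the identity just displayed, the hypothesis that $\K{\lambda+n\alpha}{\mu+n\beta}{\nu+n\gamma}\ne 0$ for some $n$ is exactly the hypothesis of Theorem~\ref{2row again}, which therefore yields $A_{\cutcutlmn}\ne 0$ together with its four conclusions, according as $(\alpha_2,\beta_2,\gamma_2)$ lies outside $\mathcal{C}$, on the border of $\mathcal{C}$, in the interior of $\mathcal{C}$, or in the interior of $\mathcal{C}_1$; each translates verbatim into the corresponding assertion of the corollary.

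I do not expect any genuine obstacle: the argument is essentially bookkeeping. The only step deserving a line of verification is the commutation $\cut{\lambda+n\alpha}=\cut{\lambda}+n(\alpha_2)$, which uses both that $\alpha$ has at most two parts (so the parts of $\lambda$ of index $\ge 3$ are untouched) and that $\lambda$ and $\alpha$ are honest partitions (so $\lambda_1+n\alpha_1$ remains the largest part and $\lambda_2+n\alpha_2\ge\lambda_3$).
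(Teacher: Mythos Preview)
Your proposal is correct and follows essentially the same approach as the paper: use closure of $\SetL$ under sums to get $(\lambda+n\alpha,\mu+n\beta,\nu+n\gamma)\in\SetL$, deduce $\K{\lambda+n\alpha}{\mu+n\beta}{\nu+n\gamma}=\RK{\cut{\lambda}+n(\alpha_2)}{\cut{\mu}+n(\beta_2)}{\cut{\nu}+n(\gamma_2)}$, and then apply Theorem~\ref{2row again}. Your write-up is in fact more careful than the paper's, which omits the verification that $\cut{\lambda+n\alpha}=\cut{\lambda}+n(\alpha_2)$.
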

\begin{proof}
Since $(\lambda, \mu, \nu)$ and $(\alpha, \beta, \gamma)$ are in $\SetL$, so are all triples of partitions $(\lambda + n \alpha, \mu + n\beta, \nu + n\gamma)$ for all $n \geq 0$. Therefore,
 $\K{\lambda+n \alpha}{\mu+ n \beta}{\nu+ n \gamma} 
= 
\RK{\cut{\lambda}+n(\alpha_2)}{\cut{\mu}+n(\beta_2)}{\cut{\nu}+n(\gamma_2)}$ for all $n \geq 0$. Then we can apply Theorem \ref{2row again}.
\end{proof}

\begin{remark}We make a few remarks about Corollary \ref{cor:KroPlus2rows}.

\begin{enumerate}
\item Statement (2) is a particular case of a much more general statement: 
given any three partitions $\alpha$, $\beta$ and $\gamma$, such that $g(\alpha, \beta, \gamma) >0$, the sequence with general term $\K{\lambda+n\alpha}{\mu+n\beta}{\nu+n\gamma}$  is eventually constant, for all triples of partitions $(\alpha, \beta, \gamma)$ of the same weight,  if and only if  $\K{n\alpha}{n\beta}{n\gamma}=1$ for all $n\ge  0$.
See \cite{Stembridge,SamSnowdenGeneralizedStability}. % \marg{EB Some doubts with the "stable value different from 0" and the "for all choices".}
\item For partitions with length at most $2$ of the same weight, $\alpha= (m-\alpha_2, \alpha_2)$, $\beta=(m-\beta_2, \beta_2)$ and $\gamma=(m-\gamma_2, \gamma_2)$, we have that $(\alpha,\beta,\gamma) \in \SetL \Leftrightarrow m \geq \alpha_2+\beta_2+\gamma_2$.
\end{enumerate}

\end{remark}

%% 

%%%%%%%%%%%%%%%%%%%%%%%%%%%%%%%%%%%%%%%%%%%%%%%%%%%%%%%%%%%%%%%%%%%%%%%%%%%%%%%
%%%%%%% GENERATING SERIES %%%%%%%%%%%%%%%%%%%%%%%%%%%%%%%%%%%%%%%%%%%%%%%%%%%
%%%%%%%%%%%%%%%%%%%%%%%%%%%%%%%%%%%%%%%%%%%%%%%%%%%%%%%%%%%%%%%%%%%%%%%%%%%%%%%

\section{Generating series}\label{generating series}

Four families of constants were defined in the previous sections: the limits $\SSK{\alpha}{\beta}{\gamma}$ under ``hook stability'' (Section \ref{hook-stability}) and the coefficients $A_{\alpha,\beta,\gamma}$, $B_{\alpha,\beta,\gamma}$ and $C_{\alpha,\beta,\gamma}$ appearing in the quasipolynomial formulas of Section \ref{section2row}.

In this section, we provide, for these families of constants, generating series akin to the generating series for the Littlewood--Richardson coefficients
\[
\sigma[XY+XZ]=\sum_{\lambda, \mu, \nu} c_{\lambda, \mu, \nu} s_{\lambda}[X] s_{\mu}[Y] s_{\nu}[Z],
\]
for the Kronecker coefficients
\[
\sigma[XYZ]=\sum_{\lambda, \mu, \nu} \K{\lambda}{\mu}{\nu} s_{\lambda}[X] s_{\mu}[Y] s_{\nu}[Z],
\]
and for the reduced Kronecker coefficients in \eqref{symmetric Brion formula}.  

\subsection{Generating series for the coefficients $\overline{\overline{g}}$.}

We give now a generating series for the limit coefficients $\SSK{\alpha}{\beta}{\gamma}$.

\begin{theorem}\label{formula-columns}
The limit $\SSK{\alpha}{\beta}{\gamma}$ in Theorem \ref{3columns} is the coefficient of $s_{\alpha}[X]s_{\beta}[Y] s_{\gamma}[Z]$ in the  expansion, in the Schur basis, of
\[
\sigma\left[XYZ+(1-\varepsilon )(XY+XZ+YZ+X+Y+Z) \right].
\]
\end{theorem}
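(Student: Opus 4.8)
The plan is to compute the generating series $\sum_{\alpha,\beta,\gamma}\SSK{\alpha}{\beta}{\gamma}\,s_\alpha[X]s_\beta[Y]s_\gamma[Z]$ directly from the description of $\SSK{\alpha}{\beta}{\gamma}$ obtained in the proof of Theorem \ref{3columns}, reusing the factorization of Lemma \ref{main lemma}. Recall from that proof that $\SSK{\alpha}{\beta}{\gamma}=\sum_{\omega\in\Omega}q_\omega$, where $\sum_\omega q_\omega \vvar^\omega=\polcol_{\alpha,\beta,\gamma}(x,y,z)$ is the specialization of $Q_{\alpha,\beta,\gamma}$ at $X'=-\varepsilon x$, $Y'=-\varepsilon y$, $Z'=-\varepsilon z$, restricted to a single letter each; equivalently $\SSK{\alpha}{\beta}{\gamma}=\polcol_{\alpha,\beta,\gamma}(1,1,1)=Q_{\alpha,\beta,\gamma}[-\varepsilon,-\varepsilon,-\varepsilon]$, i.e.\ the specialization of the symmetric function $Q_{\alpha,\beta,\gamma}$ (in the alphabets $X'$, $Y'$, $Z'$) sending each alphabet to the one-element alphabet $\{-\varepsilon\}$, which on power sums is $p_k\mapsto(-1)^{k+1}$, the same specialization $-\varepsilon$ that implements $\omega$. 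So $\SSK{\alpha}{\beta}{\gamma}=\scalar{\sigma[H']}{s_\alpha[X]s_\beta[Y]s_\gamma[Z]}_{X,Y,Z}$, where $H'$ is obtained from the alphabet $H$ of Lemma \ref{main lemma} by substituting $X'\to-\varepsilon$, $Y'\to-\varepsilon$, $Z'\to-\varepsilon$; since $Q_{\alpha,\beta,\gamma}$ is a genuine symmetric function this specialization is legitimate.

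Next I would carry out that substitution in the formula for $H$. Using $(-\varepsilon)(-\varepsilon)=1$ on each factor and $X/(-\varepsilon)=-\varepsilon X$ (the identity $p_k[-\varepsilon X]=(-1)^{k+1}p_k[X]$ already recorded after Lemma \ref{L1}), one finds that the $X'Y'Z'+X'Y'+X'Z'+Y'Z'$ part of $H$ becomes a constant that is \emph{cancelled} by the explicit $-(X'Y'Z'+X'Y'+X'Z'+Y'Z')$ term in $H$, and the $(X+X')(Y+Y')(Z+Z')+\cdots$ part expands into all products of subsets of $\{X,Y,Z,-\varepsilon,-\varepsilon,-\varepsilon\}$; after collecting, the monomials that survive are $XYZ$, together with $XY,XZ,YZ$ each appearing once from the triple products (multiplied by $-\varepsilon$) and once from the double products, and $X,Y,Z$ each appearing with multiplicity coming from pairs of $-\varepsilon$'s and from single $-\varepsilon$'s; the terms $-X/X'-Y/Y'-Z/Z'$ contribute $+\varepsilon X+\varepsilon Y+\varepsilon Z$. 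Bookkeeping the coefficients (using that the three $-\varepsilon$'s in $X',Y',Z'$ are on a par) should give exactly
\[
H'=XYZ+(1-\varepsilon)(XY+XZ+YZ+X+Y+Z),
\]
and then $\sigma[H']$ is the claimed series by Cauchy's identity (Lemma \ref{L1}) and the reproducing-kernel property, exactly as \eqref{symmetric Brion formula} was deduced from Brion's formula.

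The main obstacle I anticipate is precisely this monomial bookkeeping: one must track, for each of the $2^3$ subsets contributing to $(X+X')(Y+Y')(Z+Z')$ and each of the three $2^2$-subsets contributing to the $(X+X')(Y+Y')$-type terms, what happens after the substitution, and combine these with the $-F(X',Y',Z')$ correction and the $-X/X'$-type terms, checking that all purely-$X',Y',Z'$ (now constant) contributions cancel and that the surviving coefficients of $XY$, $X$, etc.\ are exactly $1-\varepsilon$. A clean way to organize this is to observe that $H = F(X+X',Y+Y',Z+Z') - F(X',Y',Z') - X/X'-Y/Y'-Z/Z'$ and to note $F(X+X',Y+Y',Z+Z')-F(X',Y',Z')$, evaluated at $X'=Y'=Z'=-\varepsilon$, can be computed as $F$ of the formal ``sums'' $X+(-\varepsilon)$ etc.\ minus $F(-\varepsilon,-\varepsilon,-\varepsilon)$, i.e.\ as the part of $F$ that genuinely involves $X$, $Y$ or $Z$; expanding $F(X-\varepsilon, Y-\varepsilon, Z-\varepsilon)$ once and collecting is then a short finite computation, after which one verifies $X/(-\varepsilon)=-\varepsilon X$ to handle the last three terms. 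Granting that the arithmetic comes out as stated, the theorem follows immediately.
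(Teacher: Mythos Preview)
Your approach is correct and essentially identical to the paper's own proof: both identify $\SSK{\alpha}{\beta}{\gamma}=\polcol_{\alpha,\beta,\gamma}(1,1,1)$ as the specialization of $Q_{\alpha,\beta,\gamma}$ at $X'=Y'=Z'=-\varepsilon$, invoke Lemma~\ref{main lemma} to recognize this as the coefficient of $s_\alpha[X]s_\beta[Y]s_\gamma[Z]$ in $\sigma[H(-\varepsilon,-\varepsilon,-\varepsilon)]$, and then compute $H(-\varepsilon,-\varepsilon,-\varepsilon)$ directly. The bookkeeping you outline (using $H=F(X+X',Y+Y',Z+Z')-F(X',Y',Z')-X/X'-Y/Y'-Z/Z'$ and $X/(-\varepsilon)=-\varepsilon X$) is exactly what is needed and gives the stated result.
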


\begin{proof}
It follows from the proof of Theorem \ref{3columns} that
$\SSK{\alpha}{\beta}{\gamma}=\sum_{\omega \in \Omega} q_{\omega}=\polcol_{\alpha,\beta,\gamma}(1,1,1)$. After the proof of Theorem \ref{3columns},  $\polcol_{\alpha,\beta,\gamma}(x,y,z)$ is the specialization of the symmetric function $Q_{\alpha,\beta,\gamma}$ at $X'=-\varepsilon x$, $Y' = -\varepsilon y$, $Z'=-\varepsilon z$. Therefore, $\SSK{\alpha}{\beta}{\gamma}$ is the specialization of  $Q_{\alpha,\beta,\gamma}$ at $X'=-\varepsilon$, $Y' = -\varepsilon$, $Z'=-\varepsilon$. By definition of $Q_{\alpha,\beta,\gamma}$ (see Lemma \ref{main lemma}), this is the coefficient of  $s_{\alpha}[X] s_{\beta}[Y] s_{\gamma}[Z]$  in the expansion of $\sigma[H(-\varepsilon,-\varepsilon,-\varepsilon)]$. Finally,
it is straightforward to compute that 
\[
H(-\varepsilon,-\varepsilon, -\varepsilon ) = XYZ+(1-\varepsilon) (XY+XZ+YZ+X+Y+Z).
\]
\end{proof}

\begin{remark}\label{rem P111}
We have used that $\polcol_{\alpha,\beta,\gamma}(1,1,1)=\SSK{\alpha}{\beta}{\gamma}$.
Interestingly, with the specialization $x=-1$, $y=-1$, $z=-1$ we get 
$\polcol_{\alpha,\beta,\gamma}(-1,-1,-1)=\K{\alpha}{\beta}{\gamma}$.
\end{remark}

\subsection{Generating series for the coefficients $A$, $B$  and $C$.}

Here we will prove the following.
\begin{theorem}\label{prop:ABC}
Let $\alpha$, $\beta$, $\gamma$ be three partitions.

Let $\chi=\sum_{n=1}^\infty p_n$, the formal sum of all power sum symmetric functions.

Let $W=XY+XZ+YZ+X+Y+Z$. 

The coefficients $A_{\alpha,\beta,\gamma}$,   $C_{\alpha,\beta,\gamma}$, and $B_{\alpha,\beta,\gamma}$ in Theorem \ref{2row} are the coefficients of $s_{\alpha}[X] s_{\beta}[Y] s_{\gamma}[Z]$ in the expansions in the Schur basis of, respectively,
\begin{multline*}
\sigma[XYZ+2W], \,\,
 \sigma[XYZ+(1+\varepsilon)W], \text{ and} \\
\sigma[XYZ+2W] \cdot \left( \frac{3}{4}+\frac{1}{4}\sigma[(\varepsilon-1) W]-\frac{1}{2} \chi[W] + \chi[YZ-X] \right)
\end{multline*} 
\end{theorem}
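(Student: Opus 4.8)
The plan is to recover all three generating series from the single identity furnished by Lemma \ref{main lemma},
\[
\sigma[H]=\sum_{\alpha,\beta,\gamma} Q_{\alpha,\beta,\gamma}\, s_{\alpha}[X]\,s_{\beta}[Y]\,s_{\gamma}[Z]
\]
(where $H$ is the series displayed in that lemma and $Q_{\alpha,\beta,\gamma}$ is a symmetric function in the auxiliary alphabets $X'$, $Y'$, $Z'$), by specializing $X'$, $Y'$, $Z'$ in three different ways. First I would recall, from the proof of Theorem \ref{2row}, that $\polrow_{\alpha,\beta,\gamma}(x,y,z)=\sum_{\omega\in\Omega}q_{\omega}\,x^{\omega_1}y^{\omega_2}z^{\omega_3}$ is the image of $Q_{\alpha,\beta,\gamma}$ under $X'=\{x\}$, $Y'=\{y\}$, $Z'=\{z\}$, and that with the notation of that proof one has $A_{\alpha,\beta,\gamma}=\polrow_{\alpha,\beta,\gamma}(1,1,1)$, $C_{\alpha,\beta,\gamma}=A^{+}-A^{-}=\polrow_{\alpha,\beta,\gamma}(-1,-1,-1)$, and $B_{\alpha,\beta,\gamma}=A-K/2-A^{-}/2$ with $K=\sum_{\omega}q_{\omega}\,\ell_1(\omega)$ and $\ell_1$ as in \eqref{elli}.

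The formulas for $A$ and $C$ will follow at once. Specializing $X'=Y'=Z'=1$ (the one-letter alphabet, $p_k=1$) in the identity above yields $\sum_{\alpha,\beta,\gamma}A_{\alpha,\beta,\gamma}\,s_{\alpha}s_{\beta}s_{\gamma}=\sigma[H(1,1,1)]$, and an elementary $\lambda$-ring computation (using $X/X'=X$ when $X'=1$, and expanding the products) gives $H(1,1,1)=XYZ+2W$. Likewise, since the one-letter alphabet $\{-1\}$ coincides with the specialization $\varepsilon$ (both have $p_k=(-1)^k$), setting $X'=Y'=Z'=\varepsilon$ gives $\sum_{\alpha,\beta,\gamma}C_{\alpha,\beta,\gamma}\,s_{\alpha}s_{\beta}s_{\gamma}=\sigma[H(\varepsilon,\varepsilon,\varepsilon)]$, and using $\varepsilon^{2}=1$, $\varepsilon^{3}=\varepsilon$ and $X/\varepsilon=\varepsilon X$ one computes $H(\varepsilon,\varepsilon,\varepsilon)=XYZ+(1+\varepsilon)W$.

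For $B$, I would first dispose of the $A-A^{-}/2$ part: since $A^{-}=(A-C)/2$, it contributes $\tfrac34\,\sigma[XYZ+2W]+\tfrac14\,\sigma[XYZ+(1+\varepsilon)W]=\sigma[XYZ+2W]\bigl(\tfrac34+\tfrac14\,\sigma[(\varepsilon-1)W]\bigr)$, using $\sigma[A+B]=\sigma[A]\sigma[B]$. The remaining task is the generating series of the $K_{\alpha,\beta,\gamma}$, and here is the one non-routine step. Introduce a parameter $t$ and specialize $X'=\{t^{-1}\}$, $Y'=Z'=\{t\}$ in $Q_{\alpha,\beta,\gamma}$; then $Q_{\alpha,\beta,\gamma}$ becomes $\polrow_{\alpha,\beta,\gamma}(t^{-1},t,t)=\sum_{\omega}q_{\omega}\,t^{\ell_1(\omega)}$ because $\ell_1(\omega)=\omega_2+\omega_3-\omega_1$, so differentiating at $t=1$ recovers $K_{\alpha,\beta,\gamma}$. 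Applying $\tfrac{d}{dt}\big|_{t=1}$ to the corresponding specialization of $\sigma[H]$ and using $\log\sigma[F]=\sum_{k\ge1}p_k[F]/k$, the factors $\tfrac{d}{dt}t^{-k}\big|_{t=1}=-k$ and $\tfrac{d}{dt}t^{k}\big|_{t=1}=k$ cancel the $1/k$, leaving
\[
\sum_{\alpha,\beta,\gamma}K_{\alpha,\beta,\gamma}\,s_{\alpha}s_{\beta}s_{\gamma}=\sigma[XYZ+2W]\cdot\sum_{k\ge1}\bigl(-\partial_{X'}H+\partial_{Y'}H+\partial_{Z'}H\bigr)\big|_{X'=Y'=Z'=1,\ X\mapsto p_k[X],\,\dots}.
\]
A short computation of the $\lambda$-ring partial derivatives gives $\bigl(-\partial_{X'}H+\partial_{Y'}H+\partial_{Z'}H\bigr)\big|_{X'=Y'=Z'=1}=XY+XZ-YZ+3X+Y+Z=W+2(X-YZ)$, whence $\sum_{\alpha,\beta,\gamma}K_{\alpha,\beta,\gamma}\,s_{\alpha}s_{\beta}s_{\gamma}=\sigma[XYZ+2W]\bigl(\chi[W]-2\chi[YZ-X]\bigr)$. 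Assembling the pieces, $B_{\alpha,\beta,\gamma}$ is the coefficient of $s_{\alpha}[X]s_{\beta}[Y]s_{\gamma}[Z]$ in $\sigma[XYZ+2W]\bigl(\tfrac34+\tfrac14\,\sigma[(\varepsilon-1)W]-\tfrac12\,\chi[W]+\chi[YZ-X]\bigr)$, as claimed.

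The main obstacle is precisely this last step: one must justify that, for each fixed triple $(\alpha,\beta,\gamma)$, the relevant specialization of $Q_{\alpha,\beta,\gamma}$ is a genuine Laurent polynomial in $t$ — which holds because $Q_{\alpha,\beta,\gamma}$ has only finitely many nonzero homogeneous components by Lemma \ref{main lemma} — so that differentiating coefficientwise in the Schur basis of $X$, $Y$, $Z$ is legitimate, and one must carry out the $\lambda$-ring bookkeeping relating $\partial_{X'}H$ to the power-sum specializations without error. Once that is in place the evaluations of $H$ at $(1,1,1)$ and at $(\varepsilon,\varepsilon,\varepsilon)$, and the $\sigma$-factorizations, are entirely routine.
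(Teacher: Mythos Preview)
Your proposal is correct and follows essentially the same approach as the paper: both recover $A$, $C$, and $K$ as specializations and a $t$-derivative of $\polrow_{\alpha,\beta,\gamma}$ (equivalently of $\sigma[H]$), and assemble $B=3A/4+C/4-K/2$ in the same way. The only cosmetic difference is that for $K$ the paper writes out $H(1/t,t,t)$ explicitly as a Laurent polynomial in $t$ before differentiating, whereas you apply the chain rule via $\partial_{X'}H$, $\partial_{Y'}H$, $\partial_{Z'}H$ evaluated at $X'=Y'=Z'=1$; both routes yield $\chi[W]+2\chi[X-YZ]$ and are really the same computation.
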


\begin{proof}
Let us drop in this proof the indices $(\alpha,\beta,\gamma)$ of the coefficients involved: $A$, $B$, $C$, $\polrow$ stand for $A_{\alpha,\beta,\gamma}$, $B_{\alpha,\beta,\gamma}$, $C_{\alpha,\beta,\gamma}$ and $\polrow_{\alpha,\beta,\gamma}$.

With the notations of the proof of Theorem \ref{2row}, we have 
$A=\polrow(1,1,1)$, 
$B=A-K/2-A^-/2$ and 
$C=A^+-A^-$.

Remember that  $\polrow$ is the coefficient of $s_{\alpha}[X] s_{\beta}[Y] s_{\gamma}[Z]$ in the expansion in the Schur basis of $\sigma[H(x,y,z)]$. We have
\begin{multline*}
H(x,y,z)=XYZ+(1+z)XY+(1+y)XZ+(1+x)YZ\\
+(yz+y+z-1/x)X
+(xz+x+z-1/y)Y
+(xy+x+y-1/z)Z.
\end{multline*}
Specializing $x$,$y$,$z$ at $1$ we get that $A$ is the coefficient of $s_{\alpha}[X] s_{\beta}[Y] s_{\gamma}[Z]$ in the expansion in the Schur basis of  $\sigma[XYZ+2W]$.

Let us get now a generating series for the coefficients $C$. We have $C=A^+-A^-$, where $A^+$ (resp. $A^-$) is
 the sum of all coefficients $q_{\omega}$ of $\polrow$ such that $\ell_1(\omega)$ is even (resp. odd).
Note that for any $\omega \in \ZZ^3$, we have $\ell_1(\omega) \equiv |\omega| \mod 2$. Therefore, $A^+$ (resp. $A^-$) is also the sum of all coefficients $q_{\omega}$ of $\polrow$ such that $|\omega|$ is even (resp. odd). Thus
\[
\polrow(-1,-1,-1)=\sum_{\omega} q_{\omega} \; (-1)^{|\omega|}=A^+-A^-=C.
\]
Specializing the variables $x$, $y$ and $z$ at $-1$ in $\sigma[H(x,y,z)]$ (this corresponds to specializations at $\varepsilon$ as alphabets), we get that $C$ is the coefficient of $s_{\alpha}[X] s_{\beta}[Y] s_{\gamma}[Z]$ in the expansion in the Schur basis of  $\sigma[XYZ+(1+\varepsilon)W]$.

Now $B=A-K/2-A^-/2$. Since $C=A^+-A^-$ and $A=A^++A^-$, we have also $B=3 A/4+C/4-K/2$. The generating series for the coefficients $A$ and $C$ have just been obtained. Let us focus on the generating series for the coefficients $K$.

Note that
\[
K=\sum_{(a,b,c) \in \Omega} q_{a,b,c} (b+c-a)
\]
This can be obtained as $\frac{\partial \polrow(1/t,t,t)}{\partial t}_{|t=1}$.  Therefore $K$ is the coefficient of $s_{\alpha}[X] s_{\beta}[Y] s_{\gamma}[Z]$ in the expansion in the Schur basis of 
\[
\frac{\partial \sigma\left[
H(1/t,t,t)
\right]}{\partial t}_{|t=1}
\]
We compute that 
\[
H(1/t,t,t)=t^2 X + t(W-YZ)+(XYZ+W-X)+1/t YZ.
\]
We now use that $\sigma=\exp\left(\sum_{n=1}^{\infty} p_n/n\right)$, where the $p_n$ are the power sum symmetric functions (see \cite[]{Macdonald}).
We get that $\sigma[H(1/t,t,t)]$ is equal to 
\[
\exp\left(\sum_{n=1}^{\infty} \frac{p_n[X] t^{2n}+p_n[W-YZ] t^{n}+p_n[XYZ+W-X] + p_n[YZ] t^{-n}}{n}
\right).
\]
Derivating with respect to $t$ and specializing $t$ at $1$, we obtain
\[
\frac{\partial \sigma[H(1/t,t,t)]}{\partial t}_{|t=1}
=
\sigma[H(1,1,1)] \cdot \sum_{n=1}^{\infty} \left(2 p_n[X] + p_n[W-YZ]  - p_n[YZ] \right)
\]
Let $\chi=\sum_{n=1}^{\infty} p_n$. Since $H(1,1,1)=XYZ + 2 W$, we get 
\[
\frac{\partial \sigma[H(1/t,t,t)]}{\partial t}_{|t=1}
=
\sigma[XYZ+2 W] \cdot \left(\chi[W]+2 \chi[X-YZ]\right).
\]
The generating series for the coefficients $B_{\alpha, \beta, \gamma}$ is thus 
\[
\frac{3}{4}\sigma[XYZ+2W] 
+ \frac{1}{4} \sigma[XYZ+(1+\varepsilon) W]
- \frac{1}{2} \sigma[XYZ+2 W] \cdot \left(\chi[W]+2 \chi[X-YZ]\right)
\]
which is equal to
\[
\sigma[XYZ+2W] 
\left(
\frac{3}{4}+\frac{1}{4}\sigma[(\varepsilon-1) W]-\frac{1}{2} \chi[W] + \chi[YZ-X]
\right).
\]
\end{proof}

\begin{remark}
It is possible to rewrite the formula for the generating function of the coefficients $B_{\alpha,\beta,\gamma}$ in such a way that it is clearly a combination of Schur functions with integer coefficients (as we know it is, after Theorem \ref{2row}). There are many ways of doing this. One of them is:
\[
\sigma[XYZ+2W]  \cdot \left(1 - \sum_{a \text{ even}, b} (-1)^b s_{(a|b)}[W] + \sum_{a, b} (-1)^b s_{(a|b)}[YZ-X] \right)
\]
where $(a|b)$ is the partition $(1+a, 1^b)$ ("Frobenius notation" for partitions, see \cite[I. \S 1]{Macdonald})
\end{remark}

\begin{example}
One can derive from Theorem \ref{prop:ABC} the following formulas for the coefficients in the paper, when two of the three indices are the empty partition. 
\begin{align*}
&A_{(\alpha_1,\alpha_2),\ep,\ep} =\alpha_1-\alpha_2+1,\\
&C_{(\alpha_1,\alpha_2),\ep,\ep} =
\left\lbrace
\begin{matrix}
 (-1)^{\alpha_2}  & \text{ if } \alpha_1 \equiv \alpha_2\mod 2, \\
0    & \text{ otherwise.}
\end{matrix}
\right.\\
&B_{(\alpha_1,\alpha_2),\ep,\ep} \text{ is the nearest integer from } -3 \cdot \frac{ (\alpha_1)^2-(\alpha_2-1)^2}{4}
\end{align*}
and, when $(\alpha_1,\alpha_2)$ is not the empty partition,
\[
B_{\ep, (\alpha_1,\alpha_2),\ep}  \text{ is the nearest integer from } -3 \cdot \frac{ (\alpha_1-1)^2-(\alpha_2-2)^2}{4}.
\]

Similarly, one derives from Theorem \ref{formula-columns} that
\[
\SSK{\alpha}{\ep}{\ep}=
\left\lbrace
\begin{array}{cl}
2 & \text{ if $\alpha$ is a hook},\\
1 & \text{ if } \alpha=\ep,\\
0 & \text{ otherwise.}
\end{array}
\right.
\] 
\end{example}

%%%%%%%%%%%%%%%%%%%%%%%%%%%%%%%%%%%%%%%%%%%%%%%%%%%%%%%%%%%%%%%%%%%%%%%%%%%%%%%
%%%%%%%%%%%%%%%%%t%%%%%%%%%%%%%%%%%%%%%%%%%%%%%%%%%%%%%%%%%%%%%%%%%%%%%%%%%%%%%%

\section{Final remarks}\label{final-remarks}

The rate of growth  experienced by the reduced Kronecker coefficients  as we add
cells to remaining row is  harder to understand. The stretched Kronecker
coefficients are  known to be described by a quasi-polynomial;  see
\cite{Manivel:asymptotics1, GCT6, BaldoniVergne}.  In particular, the following
corollary holds.  

\begin{corollary}[Manivel, \cite{Manivel:asymptotics1}] 
For any triple
$\lambda, \mu, $ and $\nu$, the stretched Kronecker coefficient
$g(k\lambda, k\mu, k\nu)$ is a quasi-polynomial function of $k\ge 0$.
\end{corollary}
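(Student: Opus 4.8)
The plan is to write the stretched Kronecker coefficient as a finite alternating sum of values of one fixed vector partition function, in which the dilation of $(\lambda,\mu,\nu)$ is manifest, and then to invoke the standard quasipolynomiality of such functions. Throughout, $\lambda,\mu,\nu$ are partitions of the same integer $n$.

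\emph{A Steinberg-type formula.} First I would fix $p\ge\ell(\lambda)$, $q\ge\ell(\mu)$, $r\ge\ell(\nu)$ and finite alphabets $X=\{x_1,\dots,x_p\}$, $Y=\{y_1,\dots,y_q\}$, $Z=\{z_1,\dots,z_r\}$. By Cauchy's identity and \eqref{def:kron}, $\sigma[XYZ]=\prod_{i,j,k}(1-x_iy_jz_k)^{-1}=\sum \K{\lambda}{\mu}{\nu}\,s_{\lambda}[X]s_{\mu}[Y]s_{\nu}[Z]$, the sum over partitions of lengths at most $p,q,r$. Multiplying both sides by the Vandermonde products $a_{\delta_p}(X)a_{\delta_q}(Y)a_{\delta_r}(Z)$, with $\delta_m=(m-1,\dots,1,0)$ and $a_{\delta_m}=\sum_{\sigma\in S_m}\operatorname{sgn}(\sigma)\,x^{\sigma\delta_m}$, and using $s_{\lambda}=a_{\lambda+\delta_p}/a_{\delta_p}$, one compares the coefficients of the single monomial $x^{\lambda+\delta_p}y^{\mu+\delta_q}z^{\nu+\delta_r}$ (whose exponent vectors are strictly decreasing) on the two sides to obtain
\[
\K{\lambda}{\mu}{\nu}=\sum_{\sigma\in S_p}\sum_{\tau\in S_q}\sum_{\rho\in S_r}\operatorname{sgn}(\sigma)\operatorname{sgn}(\tau)\operatorname{sgn}(\rho)\;N\!\bigl(\lambda+\delta_p-\sigma\delta_p,\ \mu+\delta_q-\tau\delta_q,\ \nu+\delta_r-\rho\delta_r\bigr),
\]
where $N(a,b,c)$ is the coefficient of $x^ay^bz^c$ in $\prod_{i,j,k}(1-x_iy_jz_k)^{-1}$, that is, the number of $(m_{ijk})\in\NN^{pqr}$ with line sums $a$, $b$, $c$ (and $N(a,b,c)=0$ if any coordinate of $a$, $b$ or $c$ is negative). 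Thus $N$ is the vector partition function $p_A$ of the $0/1$ matrix $A$ whose columns are the vectors $e_i\oplus e_j\oplus e_k\in\ZZ^p\oplus\ZZ^q\oplus\ZZ^r$, and $N(a,b,c)$ counts the lattice points of the transportation polytope $\mathcal P(a,b,c)=\{m\in\RR^{pqr}_{\ge0}:\text{line sums}=a,b,c\}$.

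\emph{Stretching.} Substituting $(k\lambda,k\mu,k\nu)$ for $(\lambda,\mu,\nu)$ and reorganising,
\[
\sum_{k\ge0}\K{k\lambda}{k\mu}{k\nu}\,t^k=\sum_{\sigma,\tau,\rho}\operatorname{sgn}(\sigma\tau\rho)\sum_{k\ge0}p_A\!\bigl(k\theta+w_{\sigma,\tau,\rho}\bigr)\,t^k,
\]
with $\theta=(\lambda,\mu,\nu)$ and $w_{\sigma,\tau,\rho}=(\delta_p-\sigma\delta_p,\delta_q-\tau\delta_q,\delta_r-\rho\delta_r)$ a fixed integer vector. Each inner sum is the generating function, graded by the last coordinate, of the lattice points of the rational polyhedron $\{(m,k)\in\RR^{pqr}_{\ge0}\times\RR_{\ge0}:Am-k\theta=w_{\sigma,\tau,\rho}\}$; since the columns of $A$ are nonnegative and nonzero, every generator of its recession cone has positive $k$-coordinate, so Barvinok's theorem on lattice-point generating functions of pointed rational polyhedra (specialising the $m$-variables to $1$) shows each inner sum is a rational function of $t$ of the form $Q(t)/\prod_i(1-t^{d_i})$ with positive integers $d_i$. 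Hence $\sum_k\K{k\lambda}{k\mu}{k\nu}t^k$ has the same shape, which already gives that $k\mapsto\K{k\lambda}{k\mu}{k\nu}$ agrees with a quasipolynomial for $k\gg0$; to see that it is quasipolynomial on the whole ray one applies instead a parametric version of Ehrhart's theorem to the genuine dilates $k\,\mathcal P(a,b,c)$ of the fixed polytopes $\mathcal P(a,b,c)$, the translations $w_{\sigma,\tau,\rho}$ being absorbed because the affine rays $k\mapsto k\theta+w_{\sigma,\tau,\rho}$ eventually lie in one chamber of the chamber complex of $A$; compare \cite{Manivel:asymptotics1, GCT6, BaldoniVergne}.

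\emph{Where the difficulty lies.} The real content is the Steinberg-type identity: a finite alternating sum of values of \emph{one} fixed counting function in which the dilation structure of $(\lambda,\mu,\nu)$ is visible; once it is in hand, the conclusion is a routine application of classical results on vector partition functions and Ehrhart counting. The one point requiring care is the role of the staircases $\delta_m$: the naïve count $N(\lambda,\mu,\nu)$ counts lattice points of the honest dilate $k\,\mathcal P(\lambda,\mu,\nu)$, to which Ehrhart's theorem applies at once, but this quantity is \emph{not} the Kronecker coefficient — the corrections $\delta_m-\sigma\delta_m$ are exactly what turn the problem into a parametric (rather than a plain) Ehrhart problem, and controlling them is the technical crux. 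Finally, I would record, in the notation of Section~\ref{rkro}, that $\overline{k\lambda}=k\overline{\lambda}$, so $\K{k\lambda}{k\mu}{k\nu}=\RK{k\overline{\lambda}}{k\overline{\mu}}{k\overline{\nu}}$ once $k$ is large; thus the same quasipolynomiality conclusion holds, for $k\gg0$, for the corresponding stretched reduced Kronecker coefficients.
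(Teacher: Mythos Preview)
The paper does not prove this corollary; it is quoted from \cite{Manivel:asymptotics1} (with parallel references to \cite{GCT6, BaldoniVergne}) as a known result, so there is no in-paper argument to compare against.

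Your strategy---a Kostant/Steinberg alternating sum expressing $\K{\lambda}{\mu}{\nu}$ through the vector partition function for $3$-way contingency arrays, followed by rationality of the resulting generating series---is one of the standard routes and is correctly set up. It yields that $\sum_{k\ge 0} \K{k\lambda}{k\mu}{k\nu}\,t^k$ is rational with denominator a product of factors $(1-t^{d_i})$, hence that $k\mapsto\K{k\lambda}{k\mu}{k\nu}$ agrees with a quasipolynomial for $k\gg 0$.

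The gap is exactly where you locate it: passing from $k\gg 0$ to all $k\ge 0$. Your proposed fix (``the affine rays $k\mapsto k\theta+w_{\sigma,\tau,\rho}$ eventually lie in one chamber'') is circular---``eventually'' is again only a large-$k$ statement, and for small $k$ these shifted rays may genuinely cross walls of the chamber complex of $A$, so each individual term $p_A(k\theta+w_{\sigma,\tau,\rho})$ is only \emph{piecewise} quasipolynomial in $k$. What is actually needed is an argument that the wall-crossing discrepancies cancel in the signed sum over $S_p\times S_q\times S_r$; this is nontrivial and is precisely the content supplied by the references you cite at that step (Manivel via Borel--Weil--Bott and cohomology vanishing on flag varieties; Mulmuley and Baldoni--Vergne via quasipolynomiality of the full multiplicity function on the weight cone). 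Deferring to those references at the crux is acceptable in a sketch, but then the argument is not self-contained, and the sentence beginning ``to see that it is quasipolynomial on the whole ray one applies instead\ldots'' does not actually carry the weight you put on it.

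A separate slip in your final paragraph: the bound $N_0$ of \eqref{N0} is linear in its arguments, so the stability condition $|k\lambda|\ge N_0(\overline{k\lambda},\overline{k\mu},\overline{k\nu})$ reduces to the $k$-independent inequality $|\lambda|\ge N_0(\overline{\lambda},\overline{\mu},\overline{\nu})$. Thus $\K{k\lambda}{k\mu}{k\nu}=\RK{k\overline{\lambda}}{k\overline{\mu}}{k\overline{\nu}}$ either holds for every $k\ge 1$ or for none; ``once $k$ is large'' is not the correct qualifier here.
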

Examples of these quasi-polynomial functions have been computed in \cite{ BOR-FPSAC, BaldoniVergne}.
All specializations of the form  $\lambda:=\lambda+k\mu$ will then be described by quasi-polynomials in $k$. 

Some particular instances of this problem have been studied in the literature. Recall that in \cite{Murnaghan:1938} Murnaghan observed that the reduced Kronecker coefficients  $\overline{g}^{\lambda}_{\mu,\nu}$ such that $|\lambda|= |\mu|+|\nu|$ coincide with the Littlewood-Richardson coefficients $c^{\lambda}_{\mu,\nu}$. 
For the stretched Littlewood-Richardson coefficients, it has been shown in \cite{Rassart, Derksen:Weyman} that $\overline{g}(k\lambda, k\mu, k\nu)$  is described by a polynomial (and not just a quasi-polynomial).
Moreover, the degree of the stretched Littlewood-Richardson polynomials have been studied in \cite{King:Tollu:Toumazet, King:Tollu:Toumazet:Hive}.

Other families (that are not Littlewood-Richardson coefficients) have appeared in the literature.
 For example, from the calculations appearing in  \cite{Colmenarejo-Rosas, Colmenarejo} we know that the sequence $\bar{g}^{(k)}_{(k^{a}),(k^{a})}$ is described by a  quasipolynomial of degree $2a-1$. However, the sequence $\bar{g}^{(k)}_{(2k-j,k^{a-1}),(k^a)}$, with $k \ge 2j$  is described by a quasipolynomial of degree $3a-2$. Note that the period of both quasipolynomials  divides $\ell$,  the least common multiple of  $1, 2, \ldots, a, a+1$. In fact, it has been checked that the period is exactly $\ell$ for $a \le 10$ for the first family, and for $a \le 7$ for the second one.

Note that for $a=1$, both sequences are described by a linear quasipolynomial of period 2, as predicted by our work. For $a=2$ the first sequence is described by a quasipolynomial of degree 3, whereas the second sequence is described by a quasipolynomial of degree $4$. The two resulting  quasi polynomials are copied here. 

\begin{example}
The coefficients $\overline{g}^{(k)}_{(k^2), (k^2)}$ are given by the following quasipolynomial of degree $3$ and period $6$:
\begin{align*}
\overline{g}^{(k)}_{(k^2), (k^2)} =
 \left\{
{
 \begin{array}{ll}
1/72 \, (k+6) \, (k^2+6 k+12)                           & \text{ if } k \equiv 0 \mod 6\\
 1/72 \,(k+5) \, (k^2+7 k+4)             & \text{ if } k \equiv 1 \mod 6\\
1/72 \,(k+4)^3              & \text{ if } k \equiv 2 \mod 6\\
1/72 \, (k+3) \,(k^2+9 k+12)     &\text{ if } k \equiv 3 \mod 6\\
1/72 \, (k+2) \, (k^2+10 k+28)                        &\text{ if } k \equiv 4 \mod 6\\
1/72 \, (k+1) \,(k+4)\, (k+7)                  &\text{ if } k \equiv 5 \mod 6
 \end{array}
  }
 \right.
 \end{align*}
  \end{example} 

The factorizations obtained for this families resemble those observed and studied for the stretched Littlewood-Richardson coefficients in \cite{King:Tollu:Toumazet, King:Tollu:Toumazet:Hive}.

 \begin{example}
The coefficients $\bar{g}^{(k)}_{(2k-j,k),(k^2)}$, with $k \ge 2j$, are given by the following quasipolynomial of degree $4$ and period $6$:
\begin{align*}
\bar{g}^{(k)}_{(2k-j,k),(k^2)}=
 \left\{
{ 
 \begin{array}{ll}
1/288 \, (j+6)\, (j^3+12 j^2+40 j+48)              & \text{ if } j \equiv 0 \mod 6\\
1/288  \,(j+5)^2\, (j+1)\, (j+7)           & \text{ if } j \equiv 1 \mod 6\\
  1/288 \, (j+4)^2\, (j+2) \,(j+8)          & \text{ if } j \equiv 2 \mod 6\\
 1/288 \,(j+3)\, (j^3+15 j^2+67 j+69)               &\text{ if } j \equiv 3 \mod 6\\
   1/288 \,(j+2) \,(j+4)^2\, (j+8)                       &\text{ if } j \equiv 4 \mod 6\\
   1/288 \,(j+1) \,(j+5)^2 \,(j+7)                 &\text{ if } j \equiv 5 \mod 6
 \end{array}}
 \right.
 \end{align*}
 \end{example}

 Interestingly, for both families, the sequences obtained  as the result of
 incrementing the  parameter $a$ (the sizes of the columns) is weakly increasing, and bounded.  This follows easily from the combinatorial interpretations in terms of plane partitions provided in  \cite{Colmenarejo-Rosas, Colmenarejo}.

\section*{Acknowledgments}
We thank Laura Colmenarejo and Rosa Orellana  for some interesting  comments.
The second and third authors  want to thank Birkbeck College and  the Imperial College Library for hosting us during our visit to London where most of this research was conducted.

\bibliographystyle{hplain}
\bibliography{kroproduct}

\def\cprime{$'$}
\begin{thebibliography}{10}

\bibitem{BaldoniVergne}
Velleda {Baldoni} and Mich{\`e}le {Vergne}.
\newblock {Multiplicity of compact group representations and applications to
  {K}ronecker coefficients}.
\newblock {\em ArXiv e-prints}, June 2015, 1506.02472.

\bibitem{Ballantine:Orellana}
Cristina~M. Ballantine and Rosa~C. Orellana.
\newblock A combinatorial interpretation for the coefficients in the
  {K}ronecker product {$s\sb {(n-p,p)}\ast s\sb \lambda$}.
\newblock {\em S\'em. Lothar. Combin.}, 54A:Art. B54Af, 29 pp. (electronic),
  2005/07.

\bibitem{Blasiak:hook}
Jonah {Blasiak}.
\newblock {Kronecker coefficients for one hook shape}.
\newblock {\em ArXiv e-prints}, September 2012, 1209.2018.

\bibitem{BDO}
C.~Bowman, M.~De~Visscher, and R.~Orellana.
\newblock The partition algebra and the {K}ronecker coefficients.
\newblock {\em Trans. Amer. Math. Soc.}, 367(5):3647--3667, 2015.

\bibitem{BMOR}
Emmanuel {Briand}, Peter~R.~W. {McNamara}, Rosa {Orellana}, and Mercedes
  {Rosas}.
\newblock {Commutation and normal ordering for operators on symmetric
  functions}.
\newblock {\em ArXiv e-prints}, September 2015, 1509.02581.

\bibitem{BOR-FPSAC}
Emmanuel Briand, Rosa Orellana, and Mercedes Rosas.
\newblock Quasipolynomial formulas for the {K}ronecker coefficients indexed by
  two two-row shapes (extended abstract).
\newblock In {\em {FPSAC} 2009}, Discrete Math. Theor. Comput. Sci. Proc., AK,
  pages 241--252. Assoc. Discrete Math. Theor. Comput. Sci., Nancy, 2009.

\bibitem{BOR-CC}
Emmanuel Briand, Rosa Orellana, and Mercedes Rosas.
\newblock Reduced {K}ronecker coefficients and counter-examples to {M}ulmuley's
  strong saturation conjecture {SH}.
\newblock {\em Comput. Complexity}, 18(4):577--600, 2009.
\newblock With an appendix by Ketan Mulmuley.

\bibitem{BOR-JA}
Emmanuel Briand, Rosa Orellana, and Mercedes Rosas.
\newblock The stability of the {K}ronecker product of {S}chur functions.
\newblock {\em J. Algebra}, 331:11--27, 2011.

\bibitem{Brion}
Michel Brion.
\newblock Stable properties of plethysm: on two conjectures of {F}oulkes.
\newblock {\em Manuscripta Math.}, 80(4):347--371, 1993.

\bibitem{Burgisser:Ikenmeyer}
Peter B{\"u}rgisser and Christian Ikenmeyer.
\newblock The complexity of computing {K}ronecker coefficients.
\newblock In {\em Proceedings of FPSAC 2008 (Formal Power Series and Algebraic
  Combinatorics)}, Valparaiso, 2008.

\bibitem{ThibonCarre}
Christophe Carr{\'e} and Jean-Yves Thibon.
\newblock Plethysm and vertex operators.
\newblock {\em Adv. in Appl. Math.}, 13(4):390--403, 1992.

\bibitem{Christandl:Harrow:Mitchison}
Matthias Christandl, Aram~W. Harrow, and Graeme Mitchison.
\newblock Nonzero {K}ronecker coefficients and what they tell us about spectra.
\newblock {\em Comm. Math. Phys.}, 270(3):575--585, 2007.

\bibitem{Christandl:Mitchison}
Matthias Christandl and Graeme Mitchison.
\newblock The spectra of quantum states and the {K}ronecker coefficients of the
  symmetric group.
\newblock {\em Comm. Math. Phys.}, 261(3):789--797, 2006.

\bibitem{ChurchEllenbergFarb:FI}
Thomas Church, Jordan~S. Ellenberg, and Benson Farb.
\newblock F{I}-modules and stability for representations of symmetric groups.
\newblock {\em Duke Math. J.}, 164(9):1833--1910, 2015.

\bibitem{Church:Farb:stability}
Thomas Church and Benson Farb.
\newblock Representation theory and homological stability.
\newblock {\em Adv. Math.}, 245:250--314, 2013.

\bibitem{ClausenMeier}
M.~Clausen and H.~Meier.
\newblock Extreme irreduzible {K}onstituenten in {T}ensordarstellungen
  symmetrischer {G}ruppen.
\newblock {\em Bayreuth. Math. Schr.}, 45:1--17, 1993.

\bibitem{Colmenarejo}
L.~{Colmenarejo}.
\newblock {Combinatorics on several families of Kronecker coefficients related
  to plane partitions}.
\newblock {\em ArXiv e-prints}, April 2016, 1604.00803.

\bibitem{Colmenarejo-Rosas}
Laura Colmenarejo and Mercedes Rosas.
\newblock Combinatorics on a family of reduced {K}ronecker coefficients.
\newblock {\em Comptes Rendus Mathematiques}, 353:865--869, 2015.

\bibitem{Derksen:Weyman}
Jerzy Derksen, Harm;~Weyman.
\newblock On the {L}ittlewood-{R}ichardson polynomials.
\newblock {\em J. Algebra}, 255(no. 2):247--257, 2002.

\bibitem{Dvir}
Yoav Dvir.
\newblock On the {K}ronecker product of {$S\sb n$} characters.
\newblock {\em J. Algebra}, 154(1):125--140, 1993.

\bibitem{Ikenmeyer:Mulmuley:Walter:Vanishing}
C.~{Ikenmeyer}, K.~D. {Mulmuley}, and M.~{Walter}.
\newblock {On vanishing of Kronecker coefficients}.
\newblock {\em ArXiv e-prints}, July 2015, 1507.02955.

\bibitem{Liu}
Ricky {Ini Liu}.
\newblock {A simplified Kronecker rule for one hook shape}.
\newblock {\em ArXiv e-prints}, December 2014, 1412.2180.

\bibitem{JarvisYungVertex2}
P.~D. Jarvis and C.~M. Yung.
\newblock Vertex operators and composite supersymmetric {$S$}-functions.
\newblock {\em J. Phys. A}, 26(8):1881--1900, 1993.

\bibitem{Jing:Spin}
Nai~Huan Jing.
\newblock Vertex operators, symmetric functions, and the spin group
  {$\Gamma_n$}.
\newblock {\em J. Algebra}, 138(2):340--398, 1991.

\bibitem{JingNie}
Naihuan Jing and Benzhi Nie.
\newblock Vertex operators, {W}eyl determinant formulae and {L}ittlewood
  duality.
\newblock {\em Ann. Comb.}, 19(3):427--442, 2015.

\bibitem{King:Tollu:Toumazet}
R.C. King, C.~Tollu, and F.~Toumazet.
\newblock {\em Stretched Littlewood-Richardson and Kostka coefficients.},
  volume~34 of {\em Symmetry in physics, 99--112}.
\newblock Amer. Math. Soc., Providence, RI, 2004.

\bibitem{King:Tollu:Toumazet:Hive}
{King, R. C. and Tollu, C. and Toumazet, F.}
\newblock {The hive model and the factorisation of {K}ostka coefficients},.
\newblock {\em {S\'em. Lothar. Combin.},}, {54A},:{Art. B54Ah, 22},,
  {2005/07},.

\bibitem{Kirillov:saturation}
Anatol~N. Kirillov.
\newblock An invitation to the generalized saturation conjecture.
\newblock {\em Publ. Res. Inst. Math. Sci.}, 40(4):1147--1239, 2004.

\bibitem{Klemm}
Michael Klemm.
\newblock Tensorprodukte von {C}harakteren der symmetrischen {G}ruppe.
\newblock {\em Arch. Math. (Basel)}, 28(5):455--459, 1977.

\bibitem{Klyachko}
A.~{Klyachko}.
\newblock {Quantum marginal problem and representations of the symmetric
  group}.
\newblock {\em ArXiv e-prints}, September 2004, quant-ph/0409113.

\bibitem{Klyachko:saturation}
Alexander~A. Klyachko.
\newblock Stable bundles, representation theory and {H}ermitian operators.
\newblock {\em Selecta Math. (N.S.)}, 4(3):419--445, 1998.

\bibitem{ThibonLascoux:stability}
A.~Lascoux and J.-Y. Thibon.
\newblock Vertex operators and the class algebras of symmetric groups.
\newblock {\em Zap. Nauchn. Sem. S.-Peterburg. Otdel. Mat. Inst. Steklov.
  (POMI)}, 283:156--177, 261, 2001.

\bibitem{Lascoux}
Alain Lascoux.
\newblock {\em Symmetric functions and combinatorial operators on polynomials},
  volume~99 of {\em CBMS Regional Conference Series in Mathematics}.
\newblock Published for the Conference Board of the Mathematical Sciences,
  Washington, DC; by the American Mathematical Society, Providence, RI, 2003.

\bibitem{Littlewood:1958}
D.~E. Littlewood.
\newblock Products and plethysms of characters with orthogonal, symplectic and
  symmetric groups.
\newblock {\em Canad. J. Math.}, 10:17--32, 1958.

\bibitem{Macdonald}
I.~G. Macdonald.
\newblock {\em Symmetric functions and {H}all polynomials}.
\newblock Oxford Mathematical Monographs. The Clarendon Press, Oxford
  University Press, New York, second edition, 1995.
\newblock With contributions by A. Zelevinsky, Oxford Science Publications.

\bibitem{Manivel}
Laurent Manivel.
\newblock Applications de {G}auss et pl{\'e}thysme.
\newblock {\em Ann. Inst. Fourier (Grenoble)}, 47(3):715--773, 1997.

\bibitem{Manivel:asymptotics1}
Laurent Manivel.
\newblock On the asymptotics of {K}ronecker coefficients.
\newblock {\em J. Algebraic Combin.}, 42(4):999--1025, 2015.

\bibitem{Manivel:asymptotics2}
Laurent Manivel.
\newblock On the asymptotics of {K}ronecker coefficients, 2.
\newblock {\em S\'em. Lothar. Combin.}, 75:Art. B75d, 13, 2015.

\bibitem{GCT6}
Ketan~D. Mulmuley.
\newblock Geometric complexity theory {VI}: the flip via saturated and positive
  integer programming in representation theory and algebraic geometry.
\newblock Technical Report TR--2007--04, Computer Science Department, The
  University of Chicago, May 2007.
\newblock also available as {arXiv:0704.0229}.

\bibitem{Murnaghan:1938}
Francis~D. Murnaghan.
\newblock The {A}nalysis of the {K}ronecker {P}roduct of {I}rreducible
  {R}epresentations of the {S}ymmetric {G}roup.
\newblock {\em Amer. J. Math.}, 60(3):761--784, 1938.

\bibitem{PakPanova:bounds}
Igor {Pak} and Greta {Panova}.
\newblock {Bounds on the Kronecker coefficients}.
\newblock {\em ArXiv e-prints}, June 2014, 1406.2988.

\bibitem{PakPanova:complexity}
Igor Pak and Greta Panova.
\newblock On the complexity of computing {K}ronecker coefficients.
\newblock {\em Computational Complexity}, pages 1--36, 2015.

\bibitem{Rassart}
Etienne Rassart.
\newblock A polynomiality property for {L}ittlewood-{R}ichardson coefficients.
\newblock {\em J. Combin. Theory Ser. A}, 107(2):161--179, 2004.

\bibitem{Ressayre}
N.~Ressayre.
\newblock Geometric invariant theory and the generalized eigenvalue problem.
\newblock {\em Invent. Math.}, 180(2):389--441, 2010.

\bibitem{Rosas}
Mercedes~H. Rosas.
\newblock The {K}ronecker product of {S}chur functions indexed by two-row
  shapes or hook shapes.
\newblock {\em J. Algebraic Combin.}, 14(2):153--173, 2001.

\bibitem{SamSnowdenGeneralizedStability}
Steven~V Sam and Andrew Snowden.
\newblock Stability patterns in representation theory.
\newblock {\em Forum Math. Sigma}, 3:e11, 108, 2015.

\bibitem{SamSnowden:Stembridge}
StevenV Sam and Andrew Snowden.
\newblock Proof of {S}tembridge's conjecture on stability of {K}ronecker
  coefficients.
\newblock {\em Journal of Algebraic Combinatorics}, pages 1--10, 2015.

\bibitem{ThibonScharf:innerPlethysm}
Thomas Scharf and Jean-Yves Thibon.
\newblock A {H}opf-algebra approach to inner plethysm.
\newblock {\em Adv. Math.}, 104(1):30--58, 1994.

\bibitem{ThibonScharfWybourne}
Thomas Scharf, Jean-Yves Thibon, and Brian~G. Wybourne.
\newblock Reduced notation, inner plethysms and the symmetric group.
\newblock {\em J. Phys. A}, 26(24):7461--7478, 1993.

\bibitem{Stanley}
Richard~P. Stanley.
\newblock {\em Enumerative combinatorics. {V}ol. 2}, volume~62 of {\em
  Cambridge Studies in Advanced Mathematics}.
\newblock Cambridge University Press, Cambridge, 1999.
\newblock With a foreword by Gian-Carlo Rota and appendix 1 by Sergey Fomin.

\bibitem{SAGE}
W.\thinspace{}A. Stein et~al.
\newblock {\em {S}age {M}athematics {S}oftware ({V}ersion 6.4.1)}.
\newblock The Sage Development Team, 2014.
\newblock {\tt http://www.sagemath.org}.

\bibitem{Stembridge}
John Stembridge.
\newblock Generalized stability of {K}ronecker coefficients.
\newblock Available at \url{http://www.math.lsa.umich.edu/~jrs/papers.html},
  August 2014.

\bibitem{Thibon}
Jean-Yves Thibon.
\newblock Hopf algebras of symmetric functions and tensor products of symmetric
  group representations.
\newblock {\em Internat. J. Algebra Comput.}, 1(2):207--221, 1991.

\bibitem{Vallejo:tomography}
E.~{Vallejo}.
\newblock {Stability of Kronecker coefficients via discrete tomography}.
\newblock {\em ArXiv e-prints}, August 2014, 1408.6219.

\bibitem{Vallejo}
Ernesto Vallejo.
\newblock Stability of {K}ronecker products of irreducible characters of the
  symmetric group.
\newblock {\em Electronic journal of combinatorics}, 6(1):1--7, 1999.

\bibitem{Zelevinsky:LNM}
Andrey~V. Zelevinsky.
\newblock {\em Representations of finite classical groups}, volume 869 of {\em
  Lecture Notes in Mathematics}.
\newblock Springer-Verlag, Berlin-New York, 1981.

\end{thebibliography}

\newpage
\appendix

\section{Table of coefficients}\label{apptab}

Tables \ref{part1} and \ref{part2} display the coefficients $\SSK{\alpha}{\beta}{\gamma}$, $A_{\alpha,\beta,\gamma}$, $ B_{\alpha,\beta,\gamma}$ and $C_{\alpha,\beta,\gamma}$ for all partitions $\alpha$, $\beta$ and $\gamma$ with weight at most $3$. Note that $\SSK{\alpha}{\beta}{\gamma}$, $A_{\alpha,\beta,\gamma}$ and $C_{\alpha,\beta,\gamma}$ are invariant under permutation of their three indices. This is why the table gives their values only for $\alpha \geq \beta \geq \gamma$, where the order $\geq$ is the degree lexicographic ordering. The coefficients $B_{\alpha,\beta,\gamma}$ is only invariant under permuting its last two indices. 

These coefficients where calculated by series expansion of the generating series and using SAGE \cite{SAGE}.

\begin{table}
\[
\begin{array}{ccccccccc}
\alpha & \beta & \gamma & \SSK{\alpha}{\beta}{\gamma} & A_{\alpha,\beta,\gamma} & B_{\alpha,\beta,\gamma} & B_{\beta,\alpha,\gamma} & B_{\gamma,\alpha,\beta} & C_{\alpha,\beta,\gamma} \\\hline
\ep       &\ep       &\ep       &1    &1    &1   
&1    &1    &1     \\
(1)      &\ep       &\ep       &2    &2    &   
0&1    &1    &    0 \\
(1)      &(1)      &\ep       &6    &6    &    0&
0&3    &    0 \\
(1)      &(1)      &(1)      &21   &21   &    0&
0&    0&1     \\
(2)      &\ep       &\ep       &2    &3    &-2  
&1    &1    &1     \\
(2)      &(1)      &\ep       &8    &10   &-7  
&-2   &3    &    0 \\
(2)      &(1)      &(1)      &34   &40   &-25 
&-5   &-5   &    0 \\
(2)      &(2)      &\ep       &14   &20   &-14 
&-14  &6    &2     \\
(2)      &(2)      &(1)      &66   &86   &-57 
&-57  &-14  &    0 \\
(2)      &(2)      &(2)      &145  &203  &-133
&-133 &-133 &5     \\
(2)      &(2)      &(1, 1)   &144  &150  &-84 
&-84  &-84  &-4    \\
(2)      &(2)      &(1, 1, 1)&204  &134  &-54 
&-54  &-121 &    0 \\
(2)      &(1, 1)   &\ep       &14   &12   &-8  
&-8   &4    &-2    \\
(2)      &(1, 1)   &(1)      &66   &62   &-33 
&-33  &-2   &    0 \\
(2)      &(1, 1)   &(1, 1)   &145  &131  &-55 
&-55  &-55  &5     \\
(2)      &(1, 1, 1)&\ep       &16   &6    &-3  
&-6   &3    &    0 \\
(2)      &(1, 1, 1)&(1)      &84   &46   &-19 
&-42  &4    &    0 \\
(2)      &(1, 1, 1)&(1, 1)   &206  &144  &-45 
&-117 &-45  &    0 \\
(2)      &(1, 1, 1)&(1, 1, 1)&326  &240  &-48 
&-168 &-168 &    0 \\
(1, 1)   &\ep       &\ep       &2    &1    &-1   &
0&    0&-1    \\
(1, 1)   &(1)      &\ep       &8    &6    &-3   &
0&3    &    0 \\
(1, 1)   &(1)      &(1)      &34   &28   &-13 
&1    &1    &    0 \\
(1, 1)   &(1, 1)   &\ep       &14   &12   &-4  
&-4   &8    &2     \\
(1, 1)   &(1, 1)   &(1)      &66   &54   &-21 
&-21  &6    &    0 \\
(1, 1)   &(1, 1)   &(1, 1)   &144  &110  &-38 
&-38  &-38  &-4    \\
(3)      &\ep       &\ep       &2    &4    &-6   &
0&    0&    0 \\
(3)      &(1)      &\ep       &8    &14   &-20 
&-6   &1    &    0 \\
(3)      &(1)      &(1)      &38   &59   &-78 
&-19  &-19  &1     \\
(3)      &(2)      &\ep       &16   &30   &-42 
&-27  &3    &    0 \\
(3)      &(2)      &(1)      &84   &138  &-178
&-109 &-40  &    0 \\
(3)      &(2)      &(2)      &206  &348  &-435
&-261 &-261 &    0 \\
(3)      &(2)      &(1, 1)   &204  &258  &-299
&-170 &-170 &    0 \\
(3)      &(2)      &(1, 1, 1)&320  &250  &-250
&-125 &-250 &    0 \\
(3)      &(1, 1)   &\ep       &16   &18   &-24 
&-15  &3    &    0 \\
(3)      &(1, 1)   &(1)      &84   &98   &-118
&-69  &-20  &    0 \\
(3)      &(1, 1)   &(1, 1)   &206  &220  &-235
&-125 &-125 &    0 \\
(3)      &(3)      &\ep       &22   &50   &-72 
&-72  &3    &    0 \\
(3)      &(3)      &(1)      &122  &240  &-321
&-321 &-81  &2     \\
(3)      &(3)      &(2)      &326  &640  &-820
&-820 &-500 &    0 \\
(3)      &(3)      &(1, 1)   &320  &478  &-574
&-574 &-335 &    0 \\
(3)      &(3)      &(3)      &565  &1243
&-1597&-1597&-1597&5     \\
(3)      &(3)      &(2, 1)   &1056 &1632
&-1888&-1888&-1888&    0 \\
(3)      &(3)      &(1, 1, 1)&544  &506  &-521
&-521 &-521 &-4   
\end{array}
\]
\caption{Table of the coefficients of the paper, for three indexing partitions with weight at most $3$ (part 1 of 2). }\label{part1}
\end{table}

\begin{table}
\[
\begin{array}{ccccccccc}
\alpha & \beta & \gamma & \SSK{\alpha}{\beta}{\gamma} & A_{\alpha,\beta,\gamma} & B_{\alpha,\beta,\gamma} & B_{\beta,\alpha,\gamma} & B_{\gamma,\alpha,\beta} & C_{\alpha,\beta,\gamma} \\\hline
(3)      &(2, 1)   &\ep       &38   &50   &-66 
&-66  &9    &    0 \\
(3)      &(2, 1)   &(1)      &224  &288  &-344
&-344 &-56  &    0 \\
(3)      &(2, 1)   &(2)      &610  &824  &-938
&-938 &-526 &    0 \\
(3)      &(2, 1)   &(1, 1)   &610  &700  &-738
&-738 &-388 &    0 \\
(3)      &(2, 1)   &(2, 1)   &2037 &2465
&-2515&-2515&-2515&1     \\
(3)      &(2, 1)   &(1, 1, 1)&1056 &928  &-832
&-832 &-832 &    0 \\
(3)      &(1, 1, 1)&\ep       &22   &10   &-12 
&-12  &3    &    0 \\
(3)      &(1, 1, 1)&(1)      &122  &80   &-85 
&-85  &-5   &-2    \\
(3)      &(1, 1, 1)&(1, 1)   &326  &260  &-240
&-240 &-110 &    0 \\
(3)      &(1, 1, 1)&(1, 1, 1)&565  &451  &-355
&-355 &-355 &5     \\
(2, 1)   &\ep       &\ep       &2    &2    &-3   &
0&    0&    0 \\
(2, 1)   &(1)      &\ep       &12   &12   &-15 
&-3   &3    &    0 \\
(2, 1)   &(1)      &(1)      &64   &64   &-72 
&-8   &-8   &    0 \\
(2, 1)   &(2)      &\ep       &28   &30   &-36 
&-21  &9    &    0 \\
(2, 1)   &(2)      &(1)      &152  &164  &-181
&-99  &-17  &    0 \\
(2, 1)   &(2)      &(2)      &382  &442  &-477
&-256 &-256 &    0 \\
(2, 1)   &(2)      &(1, 1)   &382  &378  &-371
&-182 &-182 &    0 \\
(2, 1)   &(2)      &(1, 1, 1)&610  &472  &-394
&-158 &-394 &    0 \\
(2, 1)   &(1, 1)   &\ep       &28   &26   &-28 
&-15  &11   &    0 \\
(2, 1)   &(1, 1)   &(1)      &152  &140  &-139
&-69  &1    &    0 \\
(2, 1)   &(1, 1)   &(1, 1)   &382  &330  &-293
&-128 &-128 &    0 \\
(2, 1)   &(2, 1)   &\ep       &74   &74   &-81 
&-81  &30   &    0 \\
(2, 1)   &(2, 1)   &(1)      &428  &428  &-433
&-433 &-5   &    0 \\
(2, 1)   &(2, 1)   &(2)      &1168 &1242
&-1218&-1218&-597 &    0 \\
(2, 1)   &(2, 1)   &(1, 1)   &1168 &1094 &-982
&-982 &-435 &    0 \\
(2, 1)   &(2, 1)   &(2, 1)   &3933 &3933
&-3470&-3470&-3470&1     \\
(2, 1)   &(2, 1)   &(1, 1, 1)&2037 &1609
&-1221&-1221&-1221&1     \\
(2, 1)   &(1, 1, 1)&\ep       &38   &26   &-24 
&-24  &15   &    0 \\
(2, 1)   &(1, 1, 1)&(1)      &224  &160  &-136
&-136 &24   &    0 \\
(2, 1)   &(1, 1, 1)&(1, 1)   &610  &444  &-338
&-338 &-116 &    0 \\
(2, 1)   &(1, 1, 1)&(1, 1, 1)&1056 &736  &-480
&-480 &-480 &    0 \\
(1, 1, 1)&\ep       &\ep       &2    &    0&    0&
0&    0&    0 \\
(1, 1, 1)&(1)      &\ep       &8    &2    &-2   &
0&1    &    0 \\
(1, 1, 1)&(1)      &(1)      &38   &17   &-15 
&2    &2    &-1    \\
(1, 1, 1)&(1, 1)   &\ep       &16   &10   &-8  
&-3   &7    &    0 \\
(1, 1, 1)&(1, 1)   &(1)      &84   &54   &-42 
&-15  &12   &    0 \\
(1, 1, 1)&(1, 1)   &(1, 1)   &204  &134  &-97 
&-30  &-30  &    0 \\
(1, 1, 1)&(1, 1, 1)&\ep       &22   &18   &-12 
&-12  &15   &    0 \\
(1, 1, 1)&(1, 1, 1)&(1)      &122  &88   &-59 
&-59  &29   &2     \\
(1, 1, 1)&(1, 1, 1)&(1, 1)   &320  &206  &-130
&-130 &-27  &    0 \\
(1, 1, 1)&(1, 1, 1)&(1, 1, 1)&544  &322  &-175
&-175 &-175 &-4
\end{array}
\]
\caption{Table of the coefficients of the paper, for three indexing partitions with weight at most $3$ (part 2 of 2).}
\label{part2}
\end{table}

%%%%%%%%%%%%%%%%%%%%%%%%%%%%%%%%%%%%%%%%%%%%%%%%%%%%%%%%%%%%%%%%%%%%%%%%%%%%%%%
%%%%%%%%%%%%%%%%%%%%%%%%%%%%%%%%%%%%%%%%%%%%%%%%%%%%%%%%%%%%%%%%%%%%%%%%%%%%%%%

\section{Bounds}\label{appbounds}

We prove here the assertions made in Remarks \ref{rem:ki} and \ref{rem:3row} about the values of the constants $k_i$ (in Theorem \ref{3columns}) and $k'_i$ (in Theorem \ref{2row}). These technical and less central results do not appear in the printed version of this work.

\subsection{Hook stability, reduced Kronecker coefficients}

In this section, we  find explicitly bounds for the quantities  $k_1$, $k_2$, $k_3$ appearing  in Theorem \ref{3columns}.

\begin{theorem} \label{bounds-columns}
In Theorem \ref{3columns}, one can take 
\[
\left\lbrace
\begin{array}{rcl}
k_1 &=& |\alpha|+\alpha_1+\beta'_1+\gamma'_1, \\
k_2 &=& |\beta|+\beta_1+\alpha'_1+\gamma'_1,   \\
k_3 &=& |\gamma|+\gamma_1+\alpha'_1+\beta'_1
\end{array}
\right.
\]
\end{theorem}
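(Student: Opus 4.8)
The plan is to sharpen the proof of Theorem~\ref{3columns}: that proof shows $k_1=\max_{\omega\in\Omega}\ell_1(\omega)$ is admissible, where $\Omega$ is the support of $\polcol_{\alpha,\beta,\gamma}(x,y,z)$, itself the image of the symmetric function $Q_{\alpha,\beta,\gamma}$ of Lemma~\ref{main lemma} under $X'\mapsto-\varepsilon x$, $Y'\mapsto-\varepsilon y$, $Z'\mapsto-\varepsilon z$. Since replacing $k_1$ by any larger value only shrinks the region \eqref{trans-cone} and leaves the stable value unchanged, it is enough to prove $\ell_1(\omega)\le|\alpha|+\alpha_1+\beta'_1+\gamma'_1$ for every $\omega\in\Omega$. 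The bounds on $k_2$ and $k_3$ will then be free: after expansion and cancellation, the alphabet $H$ of Lemma~\ref{main lemma} is the sum of the sixteen monomials $XYZ$, $XYZ'$, $XY'Z$, $X'YZ$, $XY'Z'$, $X'YZ'$, $X'Y'Z$, $XY$, $XY'$, $X'Y$, $XZ$, $XZ'$, $X'Z$, $YZ$, $YZ'$, $Y'Z$, together with $-X/X'-Y/Y'-Z/Z'$, an expression visibly invariant under the group $S_3$ permuting the pairs $(X,X')$, $(Y,Y')$, $(Z,Z')$; that symmetry carries $\ell_1$ to $\ell_2$ and $\ell_3$ and permutes $(\alpha,\beta,\gamma)$ accordingly.

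First I would make $\polcol_{\alpha,\beta,\gamma}(x,y,z)$ explicit as the coefficient of $s_\alpha[X]\,s_\beta[Y]\,s_\gamma[Z]$ in $\sigma[H']=\prod_T\sigma[T']$, where $T$ runs over the terms of $H$ and $T'$ is the specialization, computing each factor via Cauchy's identity $\sigma[UV]=\sum_\lambda s_\lambda[U]s_\lambda[V]$ (Lemma~\ref{L1}), the identity $s_\mu[-\varepsilon x]=s_{\mu'}[x]$ (which for a single variable equals $x^k$ when $\mu=(1^k)$ and $0$ otherwise), and $\K{\lambda}{(1^k)}{(1^k)}=\delta_{\lambda,(k)}$ (because $\mathrm{sgn}\otimes\mathrm{sgn}$ is trivial). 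Thus $\sigma[XY']$ becomes $\sum_k e_k[X]\,y^k$, $\sigma[YZ']$ becomes $\sum_k e_k[Y]\,z^k$, $\sigma[XYZ']$ becomes $\sum_\mu s_{\mu'}[X]s_\mu[Y]\,z^{|\mu|}$, $\sigma[XY'Z']$ becomes $\sum_k s_{(k)}[X]\,y^kz^k$, $\sigma[-X/X']$ becomes $\sum_k(-1)^k s_{(k)}[X]\,x^{-k}$, and analogously for the remaining terms.

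The estimate is then bookkeeping. A monomial $x^ay^bz^c$ can occur in $\polcol_{\alpha,\beta,\gamma}$ only if there are nonnegative integers $d_T$, one per term $T$ of $H$, such that the $X$-factors contributed with these degrees multiply, in the Littlewood--Richardson sense, into $s_\alpha[X]$, and likewise for $Y$ and $Z$, and such that the induced powers of $x$, $y$, $z$ are $a$, $b$, $c$. Tracking the exponents gives
\begin{multline*}
\ell_1(a,b,c)=b+c-a= d_{XYZ'}+d_{XY'Z}+2\,d_{XY'Z'}+d_{XY'}+d_{XZ'}+d_{YZ'}+d_{Y'Z}+d_{X/X'}\\
-\big(d_{X'YZ}+d_{X'Y}+d_{X'Z}+d_{Y/Y'}+d_{Z/Z'}\big),
\end{multline*}
so $\ell_1\le d_{XYZ'}+d_{XY'Z}+2\,d_{XY'Z'}+d_{XY'}+d_{XZ'}+d_{YZ'}+d_{Y'Z}+d_{X/X'}$. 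The $X$-degrees of all factors sum to $|\alpha|$, which gives $d_{XYZ'}+d_{XY'Z}+d_{XY'Z'}+d_{XY'}+d_{XZ'}+d_{X/X'}\le|\alpha|$. Moreover the $X$-factor of $\sigma[XY'Z']$ is the single row $(d_{XY'Z'})$, a Littlewood--Richardson factor of $s_\alpha[X]$, so $d_{XY'Z'}\le\alpha_1$ (a horizontal strip in $\alpha$ has at most $\alpha_1$ cells); the $Y$-factor $e_{d_{YZ'}}[Y]$ of $\sigma[YZ']$ is a single column, so $d_{YZ'}\le\ell(\beta)=\beta'_1$; and likewise $d_{Y'Z}\le\ell(\gamma)=\gamma'_1$. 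Combining, $\ell_1\le|\alpha|+d_{XY'Z'}+d_{YZ'}+d_{Y'Z}\le|\alpha|+\alpha_1+\beta'_1+\gamma'_1$, as claimed.

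The one real obstacle is to avoid a crude degree count over the six alphabets at once, which would only yield $k_1\le 2|\alpha|+|\beta|+|\gamma|$. Getting the sharp bound genuinely needs the collapse, forced by the $-\varepsilon$ specialization, of the bilinear and Kronecker pieces to single rows or columns — this is what lets the dangerous coefficient $2\,d_{XY'Z'}$ be absorbed as $|\alpha|+\alpha_1$ rather than $2|\alpha|$ — together with the Pieri strip bounds $\alpha_1$, $\beta'_1$, $\gamma'_1$; the only labour is to carry these through the list of $16+3$ terms without overlooking one.
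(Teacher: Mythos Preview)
Your argument is correct and follows essentially the same route as the paper: both bound $\max_{\omega\in\Omega}\ell_1(\omega)$ on the support of $\polcol_{\alpha,\beta,\gamma}$ by combining the $X$-degree constraint with the three Pieri-type bounds $d_{XY'Z'}\le\alpha_1$, $d_{YZ'}\le\beta'_1$, $d_{Y'Z}\le\gamma'_1$. The paper streamlines your term-by-term bookkeeping via the change of variables $x=vw/u$, $y=uw/v$, $z=uv/w$ (so that $\ell_1$ becomes the $u$-degree and the nineteen terms of $H$ group by $u$-degree into $u^2X$, $\varepsilon u X H_1$, $-\varepsilon u(\tfrac{v}{w}Y+\tfrac{w}{v}Z)$, and a remainder of non-positive $u$-degree), but the resulting constraints from $h_i^\perp s_\alpha$, $e_k^\perp s_\beta$, $e_\ell^\perp s_\gamma$ and the $X$-degree of $e_j[XH_1]$ are exactly your four bounds.
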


\begin{proof}
After the proof of Theorem \ref{3columns}, one can take $k_i = \max_{\omega \in \Omega} \ell_i(\omega)$, where $\Omega$ is the support of $\polcol=\polcol_{\alpha,\beta,\gamma}(x,y,z)$.

Let us perform the change of variables $x=\frac{vw}{u}$, $y=\frac{uw}{v}$, $z=\frac{uv}{w}$, so that the identity $x^a y^b z^c = u^{\ell_1} v^{\ell_2} w^{\ell_3}$ holds. Then $k_1$ (resp. $k_2$, $k_3$), as defined above, is the degree of $P$ with respect to the variable $u$ (resp. $v$, $w$). 

After this change of variables, $H(-\varepsilon x,-\varepsilon y,-\varepsilon z)$ equals
\begin{multline*}
XYZ+XY+XZ+YZ
-\varepsilon \cdot\left( \frac{uv}{w} XY +\frac{uw}{v} XZ +\frac{vw}{u} YZ  \right)\\
+ u^2 X + v^2 Y + w^2 Z
+ \varepsilon \frac{1}{vw}\left( u^2-u v^2 - u w^2\right) X\\
+ \varepsilon \frac{1}{uw}\left( v^2-v u^2 - v w^2\right) Y
+ \varepsilon \frac{1}{uv}\left( w^2-w u^2 - w v^2\right) Z.
\end{multline*}
We reorder the terms as follows:
\[
H(-\varepsilon x,-\varepsilon y,-\varepsilon z)=u^2 X 
+ \varepsilon u X H_1 
-\varepsilon u \left(\frac{v}{w} Y+\frac{w}{v} Z \right) + H_0.
\]
where $H_0$ is a sum of monomials with non-positive degree in $u$, and $H_1$ is free of $u$ and $X$. We now factorize $\sigma[H(-\varepsilon x,-\varepsilon y,-\varepsilon z)]$ as
\[
\sigma[u^2 X] \cdot \sigma[\varepsilon u X H_1] \cdot \sigma\left[-\varepsilon u \frac{v}{w} Y \right]
\cdot \sigma\left[-\varepsilon u \frac{w}{v} Z \right] \cdot \sigma[H_0]
\]
 and expand each series, except $\sigma[H_0]$. We get that $\sigma[H(-\varepsilon x,-\varepsilon y,-\varepsilon z)]$ is equal to  
\[
\sum u^{2i} h_i[X]\, u^j e_j[X H_1]\, \left(\frac{v}{w}u\right)^k e_k[Y]\, \left(\frac{w}{v}u\right)^{\ell} e_\ell[Z] \sigma[H_0],
\]
where the sum ranges over all nonnegative integers $i$, $j$, $k$, $\ell$. Therefore,
\begin{multline*}
\polcol=\scalar{\sigma[H(-\varepsilon x,-\varepsilon y,-\varepsilon z)]}{s_{\alpha}[X] s_{\beta}[Y] s_{\gamma}[Z]}\\
= \sum u^{2i+j+k+\ell} v^{k-\ell} w^{\ell-k} \scalar{e_j[X H_1] \sigma[H_0]}{(h_i^{\perp} s_{\alpha})[X] (e_k^{\perp} s_{\beta})[Y] (e_{\ell}^{\perp} s_{\gamma}[Z])}.
\end{multline*}
We have $h_i^{\perp} s_{\alpha}=0$ unless $i \leq \alpha_1$, and that  $e_k^{\perp} s_{\beta}=0$ (resp. $e_\ell^{\perp} s_{\gamma}=0$) unless $j \leq \beta'_1$ (resp. $k \leq \gamma'_1$). Finally, since $e_j[X H_1]$ is homogeneous of degree $j$ in $X$ and $h_i^{\perp} s_{\alpha}$ has degree $|\alpha|-i$, the summand corresponding to $i$, $j$, $k$, $\ell$ can be non zero only if $i \leq \alpha_1$, $j \leq |\alpha|-i$, $k \leq \beta'_1$ and $\ell \leq \gamma'_1$. Therefore $2i+j+k+\ell \leq |\alpha|+\alpha_1+\beta'_1+\gamma'_1$.

This proves that in Theorem \ref{3columns}, one can take $k_1=|\alpha|+\alpha_1+\beta'_1+\gamma'_1$. By symmetry, it follows that one can also take $k_2=|\beta|+\beta_1+\alpha'_1+\gamma'_1$ and $k_3=|\gamma|+\gamma_1+\alpha'_1+\beta'_1$.
\end{proof}

\begin{remark}
More detailed computations show that the coefficient of $u^{|\alpha|+\alpha_1+\beta'_1+\gamma'_1}$ in $\polcol_{\alpha,\beta,\gamma}$ is
\[
s_{{\overline{\alpha}}'}\left[ \frac{1+v^2+w^2}{vw}\right] \cdot s_{\beta'}[1] s_{\gamma'}[1] 
\]
where $\overline{\alpha}$ is the partition obtained from $\alpha$ by removing its first row (and ${\overline{\alpha}}'$ is the conjugate of $\overline{\alpha}$). This is non--zero if and only if $\beta$ and $\gamma$ have at most one column, and $\overline{\alpha}$ has at most three columns. This is, the only case when the bound is reached. 
\end{remark}

\subsection{First row for reduced Kronecker coefficients.}  

 We give bounds for the constants $k'_1, k'_2$ and $k'_3$ appearing In Theorem \ref{2row}.

\begin{theorem} \label{effective bounds 2row}
In Theorem \ref{2row}, one can take 
\[
\left\lbrace
\begin{array}{rcl}
k'_1 &=& |\alpha|+|\beta|+|\gamma| + \beta_1 , \\
k'_2 &=&  |\alpha|+|\beta|+|\gamma| + \gamma_1 , \\
k'_3 &=&  |\alpha|+|\beta|+|\gamma| + \alpha_1+ \beta_1 + \gamma_1.
\end{array}
\right.
\]
\end{theorem}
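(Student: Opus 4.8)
The plan is to adapt the proof of Theorem~\ref{bounds-columns} to the vertex operator that attaches first rows. Recall from the proof of Theorem~\ref{2row} that, with $\Omega$ the support of $\polrow_{\alpha,\beta,\gamma}(x,y,z)=\sum_{(p,q,r)\in\Omega}q_{(p,q,r)}\,x^{p}y^{q}z^{r}$, the requirement ``$\tau-\omega\in\mathcal{C}_1$ for every $\omega\in\Omega$'' is exactly \eqref{ineq 2rows} with the optimal values $k'_1=\max_{\Omega}(p-q)$, $k'_2=\max_{\Omega}(p-r)$, $k'_3=\max_{\Omega}(q+r-p)$; so it suffices to bound these three linear functionals on $\Omega$. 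By Lemma~\ref{main lemma}, specialized at $X'=x$, $Y'=y$, $Z'=z$, we have $\polrow_{\alpha,\beta,\gamma}=\scalar{\sigma[H]}{s_\alpha[X]s_\beta[Y]s_\gamma[Z]}_{X,Y,Z}$, and grouping the expression for $H(x,y,z)$ used in the proof of Theorem~\ref{prop:ABC} by monomials in $x,y,z$ gives
\[
H=-\tfrac{X}{x}-\tfrac{Y}{y}-\tfrac{Z}{z}+(XYZ+XY+XZ+YZ)+x(YZ+Y+Z)+y(XZ+X+Z)+z(XY+X+Y)+yzX+xzY+xyZ.
\]

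First I would expand $\sigma[H]$ multiplicatively via Lemma~\ref{L1}, using $\sigma[-X/x]=\sum_i(-1)^ie_i[X]x^{-i}$, $\sigma[x(YZ+Y+Z)]=\sum_a h_a[YZ+Y+Z]x^a$, $\sigma[yzX]=\sum_d h_d[X](yz)^d$, and cyclically for the other ``row'' and ``cross'' terms, keeping $\sigma[XYZ+XY+XZ+YZ]$ intact. Pairing against $s_\alpha[X]s_\beta[Y]s_\gamma[Z]$ then exhibits $\polrow_{\alpha,\beta,\gamma}$ as a finite sum, indexed by nonnegative integers $i,j,k,a,b,c,d,e,f$, of monomials $x^{p}y^{q}z^{r}$ with $p=-i+a+e+f$, $q=-j+b+d+f$, $r=-k+c+d+e$, whose coefficient is, up to sign, a scalar product of $s_\alpha[X]s_\beta[Y]s_\gamma[Z]$ with a product of Schur-positive blocks $e_i[X]e_j[Y]e_k[Z]$, $\sigma[XYZ+XY+XZ+YZ]$, $h_a[YZ+Y+Z]h_b[XZ+X+Z]h_c[XY+X+Y]$, $h_d[X]h_e[Y]h_f[Z]$. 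Every point of $\Omega$ is induced by at least one such tuple with nonzero coefficient, so it is enough to bound $p-q$, $p-r$ and $q+r-p$ over those tuples.

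The bound is then degree counting. For a tuple to contribute, the total $X$-, $Y$-, $Z$-degrees of the product must equal $|\alpha|$, $|\beta|$, $|\gamma|$; noting that $h_b[XZ+X+Z]$, $h_d[X]$, $e_i[X]$ carry no $Y$, and so on, this gives $i+d+\deg_X h_b+\deg_X h_c\le|\alpha|$, $j+e+\deg_Y h_a+\deg_Y h_c\le|\beta|$, $k+f+\deg_Z h_a+\deg_Z h_b\le|\gamma|$. Since every letter of $YZ+Y+Z$ has $Y$-degree plus $Z$-degree at least $1$ (and cyclically), we also get $a\le\deg_Y h_a+\deg_Z h_a$, $b\le\deg_X h_b+\deg_Z h_b$, $c\le\deg_X h_c+\deg_Y h_c$; finally $h_d^{\perp}s_\alpha=0$ unless $d\le\alpha_1$. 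Substituting these in, $p-q\le a+e+j\le|\beta|+|\gamma|$, $p-r\le a+f+k\le|\beta|+|\gamma|$, and $q+r-p\le i+b+c+2d\le|\alpha|+|\beta|+|\gamma|+d\le|\alpha|+|\beta|+|\gamma|+\alpha_1$. As $|\beta|+|\gamma|\le|\alpha|+|\beta|+|\gamma|+\beta_1$ (resp.\ $\le|\alpha|+|\beta|+|\gamma|+\gamma_1$) and $|\alpha|+|\beta|+|\gamma|+\alpha_1\le|\alpha|+|\beta|+|\gamma|+\alpha_1+\beta_1+\gamma_1$, the stated values of $k'_1$, $k'_2$, $k'_3$ are admissible.

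The algebra of writing $H$ by monomials and expanding $\sigma[H]$ is routine; the one delicate point, and where this diverges from Theorem~\ref{bounds-columns}, is that the factors $\sigma[x(YZ+Y+Z)]$, $\sigma[y(XZ+X+Z)]$, $\sigma[z(XY+X+Y)]$ each involve two of $X,Y,Z$ at once, so one cannot isolate the effect of any one of the indices $\alpha,\beta,\gamma$ through a single adjoint operator as in the column case. Instead one must carry the three degree budgets $|\alpha|,|\beta|,|\gamma|$ simultaneously, and the ``at least one letter'' lower bounds on $\deg h_a,\deg h_b,\deg h_c$ are exactly what lets the estimate close. Incidentally, the same computation yields the sharper bounds $k'_1,k'_2\le|\beta|+|\gamma|$ and $k'_3\le|\alpha|+|\beta|+|\gamma|+\alpha_1$.
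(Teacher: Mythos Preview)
Your argument is correct. Both you and the paper bound the support $\Omega$ of $\polrow_{\alpha,\beta,\gamma}$ by degree considerations, but the executions differ. The paper performs a monomial change of variables so that each linear functional ($p-q$, $p-r$, $q+r-p$) becomes a single exponent, then partially factors $\sigma[H]$ according to the degree in that new variable: the term of highest degree in $u$ is $u^{2}vw^{2}\,Y$, so $\sigma[H]=\sum_{i,j}u^{2i+j}(vw^{2})^{i}h_{i}[Y]h_{j}[H_{1}]\sigma[H_{0}]$, and the bound follows from $h_{i}^{\perp}s_{\beta}=0$ for $i>\beta_{1}$ together with a total-degree cutoff on $h_{j}[H_{1}]$. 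You instead expand $\sigma[H]$ completely into a ninefold product indexed by $(i,j,k,a,b,c,d,e,f)$ and run the three degree budgets $|\alpha|,|\beta|,|\gamma|$ simultaneously, using in addition the observation that every letter of $YZ+Y+Z$ (and cyclically) has combined $Y,Z$-degree at least one.

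One point worth making explicit in your write-up: the factors $h_{a}[YZ+Y+Z]$, $h_{b}[XZ+X+Z]$, $h_{c}[XY+X+Y]$ and $\sigma[XYZ+XY+XZ+YZ]$ are not tri-homogeneous, so ``$\deg_{X}h_{b}$'' etc.\ should be read as ``there exists a tri-homogeneous decomposition whose $X$-degrees satisfy \ldots''. Since a nonzero scalar product with $s_{\alpha}s_{\beta}s_{\gamma}$ forces the tridegree-$(|\alpha|,|\beta|,|\gamma|)$ component of the product to be nonzero, such a decomposition exists, and your inequalities go through.

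Your direct approach is a bit more laborious but also yields the sharper bounds $k'_{1},k'_{2}\le|\beta|+|\gamma|$ and $k'_{3}\le|\alpha|+|\beta|+|\gamma|+\alpha_{1}$ that you note at the end; the paper's change-of-variables argument, as written, only reaches the looser values in the statement.
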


\begin{proof}
After the proof of Theorem \ref{2row}, one can take 
\[
\left\lbrace
\begin{array}{rcl}
k'_1 &=& \max_{\omega \in \Omega} \ell_1(\omega),\\
k'_2 &=&  \max_{\omega \in \Omega} \left( \ell_1(\omega)-\ell_2(\omega)\right), \\
k'_3 &=& \max_{\omega \in \Omega} \left( \ell_1(\omega)-\ell_3(\omega)\right).
\end{array}
\right.
\]
Let us perform the change of variables $x = u v w$,  $ y = v w$, $z = u w$, so that $x^a y ^b z^c = u^{\ell_1-\ell_2} v^{\ell_1-\ell_3} w^{\ell_1}$. Then the constants $k'_1$, $k'_2$, $k'_3$ are the degrees of $\polrow_{\alpha, \beta, \gamma}$ in the variables, respectively, $u$, $v$ and $w$.

Let us bound the degree in $u$ of  $\polrow$. 
After the change of variables, we obtain that
$
H(x,y,z)
=
u^2 v w^2 Y + u H_1 + H_0
$
where $H_1$ is free of $u$ and has all its terms of degree at least $1$ in $X$, $Y$ and $Z$, and $H_0$ has all its term of degree $\leq 0$ in $u$. 
Thus,
\begin{align*}
\sigma[H]
= \sigma[u^2 v w^2 Y]  \sigma[u H_1]  \sigma[H_0]
= \sum_{i,j} u^{2 i + j} (v w^2)^i h_i[Y] h_j[H_1] \sigma[H_0] 
\end{align*}
and, therefore,
\begin{align*}
\polrow{} &= \sum_{i,j} u^{2i+j} (v w^2)^j \scalar{ h_i[Y] h_j[H_1] \sigma[H_0]}{s_{\alpha}[X] s_{\beta}[Y] s_{\gamma}[Z]}\\
&=\sum_{i,j} u^{2i+j} (v w^2)^j \scalar{h_j[H_1] \sigma[H_0]}{s_{\alpha}[X] (h_i^{\perp }s_{\beta})[Y] s_{\gamma}[Z]}\\
\end{align*}
Note that $h_i^{\perp} s_{\beta} = 0$ unless $i \leq \beta_1$. Moreover the left--hand side of each scalar product in the sum is now a sum of homogeneous symmetric functions all of total degree at least  $j$, while the right--hand side has degree $|\alpha|+ |\beta|+ |\gamma| -i$. Thus,  the non-zero summands  fulfill $j \leq |\alpha|+|\beta+|\gamma| -i$.  We conclude that for all non--zero summands, $2i + j \leq |\alpha|+ |\beta|+ |\gamma|+ \beta_1$. 
\end{proof}

%%%%%%%%%%%%%%%%%%%%%%%%%%%%%%%%%%%%%%%%%%%%%%%%%%%%%%%%%%%%%%%%%%%%%%%%%

\section{Another approach to the hook stability property, derived from Murnaghan's stability and conjugation}

\subsection{Half of Theorem \ref{thm:hook stab}}

It is well--known that the Kronecker coefficients are invariant under conjugation of any two of their arguments:
\[
\K{\lambda}{\mu}{\nu}=\K{\lambda'}{\mu'}{\nu}=\K{\lambda'}{\mu}{\nu'}=\K{\lambda}{\mu'}{\nu'}.
\]
We have (conjugating the arguments in position 1 and 2), $\K{\lambda}{\mu}{\nu}= \K{\lambda'}{\mu'}{\nu}$. 
Assuming that $(\lambda',\mu',\nu)$ is stable,  that is, that the value of the Kronecker coefficient does not change by adding one to the first row of each, we have
\begin{equation}\label{stab1}
\K{\lambda'}{\mu'}{\nu} = \K{\lambda'\op{1}{0}}{\mu'\op{1}{0}}{\nu\op{1}{0}}.
\end{equation}
Conjugating again the arguments in position 1 and 2, we have that 
$
\K{\lambda'\op{1}{0}}{\mu'\op{1}{0}}{\nu\op{1}{0}}=\K{\lambda\op{0}{1}}{\mu\op{0}{1}}{\nu\op{1}{0}}.
$
We conclude that 
$
\K{\lambda}{\mu}{\nu} = \K{\lambda\op{0}{1}}{\mu\op{0}{1}}{\nu\op{1}{0}}.
$
From Lemma \ref{lemma:N0} we get the following sufficient condition for \eqref{stab1} to hold:
\begin{equation}\label{stabconj1}
N \geq N_0(\cutt{\lambda}, \cutt{\mu}, \cutt{\nu})+  (\lambda'_1+\mu_1+\nu_1)/2,
\end{equation}
where, again, $N$ is the weight of the partitions $\lambda$, $\mu$ and $\nu$.
Likewise, by conjugating the partitions at positions 1 and 3, or 2 and 3, or the Kronecker coefficients, we would get that 
$
\K{\lambda}{\mu}{\nu} = \K{\lambda\op{0}{1}}{\mu\op{1}{0}}{\nu\op{0}{1}}$ and $ \K{\lambda}{\mu}{\nu} = \K{\lambda\op{1}{0}}{\mu\op{0}{1}}{\nu\op{0}{1}},
$
under the assumptions that, respectively
\begin{align}
N &\geq N_0(\cutt{\lambda}, \cutt{\mu}, \cutt{\nu})+  (\lambda_1+\mu'_1+\nu_1)/2, \label{stabconj2}\\
N &\geq N_0(\cutt{\lambda}, \cutt{\mu}, \cutt{\nu})+  (\lambda_1+\mu_1+\nu'_1)/2. \label{stabconj3}
\end{align}
Of course we have also $\K{\lambda}{\mu}{\nu} =\K{\lambda\op{1}{0}}{\mu\op{1}{0}}{\nu\op{1}{0}} $ when
\begin{equation}\label{stabconj0}
N \geq N_0(\cutt{\lambda}, \cutt{\mu}, \cutt{\nu})+  (\lambda_1'+\mu'_1+\nu'_1)/2.
\end{equation}

 Assume that all four hypotheses \eqref{stabconj1},\eqref{stabconj2}, \eqref{stabconj3} and \eqref{stabconj0} hold for $(\lambda, \mu, \nu)$. One can check that they hold as well for all triples of partitions $(\lambda \op{m-a}{a}, \mu\op{m-b}{b}, \nu\op{m-c}{c})$ such that $(a,b,c,m)$ is in the semigroup  $\mathcal{S}$  generated by the four vectors $(1,1,0,1)$, $(1,0,1,1)$, $(1,1,0,1)$ and $(0,0,0,1)$. It follows, by induction, that 
\[
\K{\lambda}{\mu}{\nu}=\K{\lambda \op{m-a}{a}}{\mu\op{m-b}{b}}{\nu\op{m-c}{c}}
\]
for all $(a,b,c,m) \in \mathcal{S}$.  It is not difficult to establish (using again the change of variables $\ell_1$, $\ell_2$, $\ell_3$) that the semigroup $\mathcal{S}$ is the set of points $(a,b,c,m) \in \NN^4$ that fulfill the condition \eqref{coneD} \emph{and, additionally, the congruence $a+b+c \equiv 0 \mod 2$}.

Let us consider also the case when the triple $(\lambda, \mu, \nu)$ is not assumed to fulfill \eqref{stabconj1},\eqref{stabconj2}, \eqref{stabconj3} and \eqref{stabconj0}. Then one can check that $(\lambda \op{m-a}{a}, \mu\op{m-b}{b}, \nu\op{m-c}{c})$  fulfill these conditions when a system of type \eqref{translation of E} holds, with 
\[
\left\lbrace
\begin{matrix}
d_1  &=& 2 \; (N_0-N) + \lambda'_1+\mu_1+\nu_1, \\
d_2  &=& 2 \; (N_0-N) + \lambda_1+\mu'_1+\nu_1, \\
d_3  &=& 2 \; (N_0-N) + \lambda_1+\mu_1+\nu'_1, \\
d      &=& N_0 -N  + (\lambda'_1+\mu'_1+\nu'_1)/2.
\end{matrix}
\right.
\]
Therefore, we \emph{nearly} recover the stability property of Theorem \ref{thm:hook stab}. To recover the full theorem, it would be enough to establish that there exists $m$ big enough, such that
\[
\K{\lambda\op{2m}{2m}}{\mu\op{2m}{2m}}{\nu\op{2m}{2m}} 
=
\K{\lambda\op{2m+1}{2m+1}}{\mu\op{2m+1}{2m+1}}{\nu\op{2m+1}{2m+1}} .
\]
See next subsection for a possible approach to this question.

\subsection{A monotonicity conjecture}

Remember that Murnaghan's sequences of Kronecker coefficients are weakly increasing (see Section \ref{Murnaghan}): for any three partitions  $\lambda$, $\mu$, $\nu$ of the same weight,
\begin{equation}\label{ineq 111}
\K{\lambda}{\mu}{\nu} \leq \K{\lambda+(1)}{\mu+(1)}{\nu+(1)}.
\end{equation}
Here is, conjecturally, a more general monotonicity property.
\begin{conjecture}[Conjecture \ref{conj mono} restated]
For any three partitions $\lambda$, $\mu$ and $\nu$ of the same weight, and any $(a,b,c,m)$ fulfilling \eqref{coneD},
\[
\K{\lambda}{\mu}{\nu} \leq \K{\lambda\op{m-a}{a}}{\mu \op{m-b}{b}}{\nu\op{m-c}{c}}.
\]
\end{conjecture}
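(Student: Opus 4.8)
The statement is Conjecture \ref{conj mono}, and the plan is to reduce it to its minimal instance, Conjecture \ref{conj 111}, and then to attack that instance directly.

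\emph{Step 1: reduction to Conjecture \ref{conj 111}.} In the coordinates $(a,b,c,m)$ of \eqref{translation of E}, consider the move ``add one cell to the first row of each of $\lambda,\mu,\nu$'' together with its three conjugates ``add one cell to the first column of two of them and to the first row of the third''. Each of these is unconditional (it is Brion's monotonicity of Murnaghan's sequences, after conjugating an even number of the arguments when needed), and in these coordinates they are the vectors $(0,0,0,1)$, $(1,1,0,1)$, $(1,0,1,1)$, $(0,1,1,1)$, which generate the semigroup of lattice points of \eqref{coneD} with $a+b+c$ even. If $a+b+c$ is odd then, since $\ell_i(a,b,c)\equiv a+b+c\pmod 2$, all $\ell_i\geq 1$, hence $a,b,c\geq 1$ and $m\geq a+b+c\geq 3$, so $(a-1,b-1,c-1,m-2)$ is again a lattice point of \eqref{coneD}, now with even coordinate sum. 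Therefore every instance of Conjecture \ref{conj mono} follows, by chaining these unconditional moves, once we know
\[
\K{\lambda}{\mu}{\nu}\ \leq\ \K{\lambda\op{1}{1}}{\mu\op{1}{1}}{\nu\op{1}{1}},
\]
i.e. once we know Conjecture \ref{conj 111}; this is the equivalence recorded in the text.

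\emph{Step 2: attacking Conjecture \ref{conj 111}.} Factor $\lambda\op{1}{1}=(\lambda\cup(1))+(1)$; then Brion's monotonicity, applied to the equal-weight triple $(\lambda\cup(1),\mu\cup(1),\nu\cup(1))$, gives $\K{\lambda\cup(1)}{\mu\cup(1)}{\nu\cup(1)}\leq\K{\lambda\op{1}{1}}{\mu\op{1}{1}}{\nu\op{1}{1}}$, so it is enough to show that adjoining one cell to the first column of each of the three partitions is monotone: $\K{\lambda}{\mu}{\nu}\leq\K{\lambda\cup(1)}{\mu\cup(1)}{\nu\cup(1)}$. Writing $\K{\lambda}{\mu}{\nu}=\dim\operatorname{Hom}_{S_n}(\mathbf 1,\,S^{\lambda}\otimes S^{\mu}\otimes S^{\nu})$ with $n=|\lambda|$, I would look for an explicit injection into $\operatorname{Hom}_{S_{n+1}}(\mathbf 1,\,S^{\lambda\cup(1)}\otimes S^{\mu\cup(1)}\otimes S^{\nu\cup(1)})$: in each tensor factor $S^{\lambda\cup(1)}$ is the summand of $\operatorname{Ind}_{S_n}^{S_{n+1}} S^{\lambda}$ at the lowest addable cell, and $S^{\lambda}$ occurs as a summand of $\operatorname{Res}^{S_{n+1}}_{S_n} S^{\lambda\cup(1)}$ by removing that cell, so restricting the triple tensor product to the diagonal $S_n\subset S_{n+1}$ and pairing with $\mathbf 1$ yields a count of trivial constituents that, after Littlewood--Richardson bookkeeping, ought to dominate the count for $(\lambda,\mu,\nu)$. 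An equivalent symmetric-function reformulation is to apply $\vertexop{-\varepsilon t}{X}\vertexop{-\varepsilon t}{Y}\vertexop{-\varepsilon t}{Z}$ to $\sigma[XYZ]$, as in Section \ref{sec:vertop}, and to prove that the coefficient of $t^{1}$ minus the coefficient of $t^{0}$ expands with nonnegative coefficients on the products $s_{\lambda}[X]s_{\mu}[Y]s_{\nu}[Z]$.

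\emph{Main obstacle.} The entire difficulty is concentrated in that last positivity/injectivity statement for the odd case. The even/odd dichotomy of Step 1 shows it cannot come from the symmetries \eqref{eq:symeq} and Murnaghan's monotonicity alone; there is no positive combinatorial rule for Kronecker coefficients to fall back on; and because the two new cells of $\lambda\op{1}{1}$ relative to $\lambda$ sit at disconnected, extremal positions, the additive bound of Lemma \ref{inherited}(1) — which requires the increment to be a bona fide partition added componentwise — does not apply. A more modest and realistic target is Conjecture \ref{conj 111} on the stable range $|\lambda|\geq N_0(\cut\lambda,\cut\mu,\cut\nu)$ of \eqref{N0}, where both sides become reduced Kronecker coefficients and the claim becomes $\RK{\alpha}{\beta}{\gamma}\leq\RK{\alpha\cup(1)}{\beta\cup(1)}{\gamma\cup(1)}$; here the cone description of Proposition \ref{files and columns} and the Schur generating series of Section \ref{generating series} provide concrete handles, although even this reduced inequality is not immediate because the coefficients $q_{\omega}$ of $\polcol_{\alpha,\beta,\gamma}$ need not all be nonnegative.
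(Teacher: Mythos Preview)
The statement you are addressing is a \emph{conjecture}, not a proven theorem. The paper does not prove it. What the paper does (in Proposition \ref{prop conj} of the appendix, summarized in Section \ref{alternative subsection}) is exactly your Step 1: it shows that the semigroup generated by $(0,0,0,1)$, $(1,1,0,1)$, $(1,0,1,1)$, $(0,1,1,1)$ covers all lattice points of \eqref{coneD} with $a+b+c$ even, and that the remaining odd-parity points are reached from these by the single step $(1,1,1,2)$. Hence Conjecture \ref{conj mono} is equivalent to Conjecture \ref{conj 111}. Your Step 1 is correct and is the same argument as the paper's.

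Your Step 2 is not a proof, and you acknowledge this yourself in the ``Main obstacle'' paragraph. The reduction to the column-only inequality $\K{\lambda}{\mu}{\nu}\leq\K{\lambda\cup(1)}{\mu\cup(1)}{\nu\cup(1)}$ via Brion's monotonicity is valid, but that inequality is just as open as Conjecture \ref{conj 111} itself; neither the restriction/induction sketch nor the vertex-operator positivity reformulation is carried through, and the paper does not attempt them either --- it simply records computer verification of Conjecture \ref{conj 111} for weights up to $16$. So there is nothing further to compare: you have reproduced the paper's reduction, and the residual statement remains conjectural on both sides.
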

Again, using the symmetries of the Kronecker coefficients, it is not difficult to prove part of this conjecture.
\begin{proposition}\label{prop conj}
Let $\lambda$, $\mu$ and $\nu$ be three non--empty partitions of the same weight. 
If 
$
\K{\lambda}{\mu}{\nu} \leq \K{\lambda\op{1}{1}}{\mu\op{1}{1}}{\nu\op{1}{1}},
$
then
\[
\K{\lambda}{\mu}{\nu} \leq \K{\lambda\op{m-a}{a}}{\mu\op{m-b}{b}}{\nu\op{m-c}{c}}
\]
for all $(a,b,c,m)$ fulfilling \eqref{coneD}.
\end{proposition}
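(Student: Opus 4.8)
The plan is to mimic the semigroup argument given just above for the ``half of Theorem~\ref{thm:hook stab}'', replacing the stability \emph{equalities} used there by the monotonicity \emph{inequality} \eqref{ineq 111}. The first step is to collect the basic moves. From \eqref{ineq 111}, and since $\zeta\op{1}{0}=\zeta+(1)$, we have for \emph{every} triple $(\rho,\sigma,\tau)$ of partitions of a common weight that $\K{\rho}{\sigma}{\tau}\le\K{\rho\op{1}{0}}{\sigma\op{1}{0}}{\tau\op{1}{0}}$. Conjugating the first two arguments, applying \eqref{ineq 111}, and conjugating back — using that $\zeta'\op{1}{0}=\zeta'+(1)$ while $(\zeta'+(1))'=\zeta\op{0}{1}$, that is, conjugation turns ``adding a box to the first row'' into ``adding a box to the first column'' — gives $\K{\rho}{\sigma}{\tau}\le\K{\rho\op{0}{1}}{\sigma\op{0}{1}}{\tau\op{1}{0}}$; permuting which of the three arguments keeps the row box yields two companion inequalities. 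Together with the hypothesis $\K{\lambda}{\mu}{\nu}\le\K{\lambda\op{1}{1}}{\mu\op{1}{1}}{\nu\op{1}{1}}$ this gives five ``moves''; note that each of the five operations adds the \emph{same} integer to all three weights, so applying any of them preserves the property ``same weight'' and Murnaghan monotonicity continues to apply after each step.

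The second step is the bookkeeping. The operations compose additively, $\zeta\op{p}{q}\op{p'}{q'}=\zeta\op{p+p'}{q+q'}$, so a composite move taking $(\lambda,\mu,\nu)$ to $(\lambda\op{m-a}{a},\mu\op{m-b}{b},\nu\op{m-c}{c})$ is encoded by a vector $(a,b,c,m)\in\NN^4$ with $m\ge\max(a,b,c)$, and in this encoding the four universal moves are addition by $(0,0,0,1)$, $(1,1,0,1)$, $(1,0,1,1)$, $(0,1,1,1)$, while the hypothesis move is addition by $(1,1,1,2)$. Chaining the four universal inequalities from $(\lambda,\mu,\nu)$ establishes $\K{\lambda}{\mu}{\nu}\le\K{\lambda\op{m-a}{a}}{\mu\op{m-b}{b}}{\nu\op{m-c}{c}}$ for all $(a,b,c,m)$ in the subsemigroup $\mathcal{S}\subseteq\NN^4$ generated by the four universal vectors; applying the hypothesis move \emph{first} — legitimate because the hypothesis is precisely the needed inequality for the triple $(\lambda,\mu,\nu)$ itself — and then chaining universal moves establishes it for all $(a,b,c,m)\in(1,1,1,2)+\mathcal{S}$. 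The hypothesis move never needs to be used twice, since $2\cdot(1,1,1,2)=(2,2,2,4)\in\mathcal{S}$.

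The third step is to verify that the locus \eqref{coneD} lies inside $\mathcal{S}\cup\bigl((1,1,1,2)+\mathcal{S}\bigr)$. I would use the same change of variables $\ell_1=b+c-a$, $\ell_2=a+c-b$, $\ell_3=a+b-c$ invoked for the four-hypothesis version in the preceding subsection: in these coordinates $\mathcal{S}=\{(\ell_1,\ell_2,\ell_3,m):\ell_i\ge0\text{ all even},\ 2m\ge\ell_1+\ell_2+\ell_3\}$, and $(1,1,1,2)+\mathcal{S}$ is the same set with the $\ell_i$ all odd, so their union is $\{\ell_i\ge0,\ 2m\ge a+b+c\}$, which contains the locus \eqref{coneD} because there $m\ge a+b+c$ already forces $2m\ge a+b+c$. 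The inequality of the proposition follows on this locus.

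I expect the main obstacle to be the conjugation bookkeeping in the first step: correctly tracking which two arguments are conjugated at each move and verifying that conjugating $\op{1}{0}$ produces $\op{0}{1}$, so that the four universal moves really are addition by the four stated lattice vectors. The final lattice-point count is routine — it is essentially the computation already done for $\mathcal{S}$ in the preceding subsection — and the only new input over that discussion is that the hypothesis supplies the extra generator $(1,1,1,2)$ exactly once.
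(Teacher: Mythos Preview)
Your argument is correct and is essentially the same semigroup approach as the paper's proof: derive the four ``universal'' inequalities from Murnaghan monotonicity and the conjugation symmetry, use the hypothesis as the extra move $(1,1,1,2)$, and check that the locus \eqref{coneD} is contained in $\mathcal{S}\cup\bigl((1,1,1,2)+\mathcal{S}\bigr)$ via the $\ell_i$ change of variables. Your bookkeeping is in fact a bit more explicit than the paper's (which writes ``$(1,1,1,1)+\tau$'', almost certainly a typo for $(1,1,1,2)+\tau$, since the hypothesis move $\lambda\mapsto\lambda\op{1}{1}$ adds two boxes), and your observation that $2\cdot(1,1,1,2)\in\mathcal{S}$ correctly explains why the hypothesis need only be invoked once.
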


\begin{proof}
We use again the symmetry of the Kronecker coefficients under conjugating two arguments. 
We have the identity $\K{\lambda}{\mu}{\nu}= \K{\lambda'}{\mu'}{\nu}$. Using \eqref{ineq 111} we get
$
\K{\lambda'}{\mu'}{\nu} \leq \K{\lambda'\op{1}{0}}{\mu'\op{1}{0}}{\nu\op{1}{0}}.
$
Conjugating again the arguments in position 1 and 2, we have
that $
\K{\lambda'\op{1}{0}}{\mu'\op{1}{0}}{\nu\op{1}{0}}$ is equal to $\K{\lambda\op{0}{1}}{\mu\op{0}{1}}{\nu\op{1}{0}}.
$
Therefore, $
\K{\lambda}{\mu}{\nu} \leq \K{\lambda\op{0}{1}}{\mu\op{0}{1}}{\nu\op{1}{0}}.
$
Likewise
$
\K{\lambda}{\mu}{\nu} \leq \K{\lambda\op{0}{1}}{\mu\op{1}{0}}{\nu\op{0}{1}} $   and $\K{\lambda}{\mu}{\nu} \leq \K{\lambda\op{1}{0}}{\mu\op{0}{1}}{\nu\op{0}{1}}.$

Using these 3 identities, together with \eqref{ineq 111}, we see that $\K{\lambda}{\mu}{\nu} \leq \K{\lambda}{\mu}{\nu}(\tau)$ for all $\tau$ in the semigroup $\mathcal{S}$ generated by $(1,1,0,1)$, $(1,0,1,1)$, $(0,1,1,1)$ and $(0,0,0,1)$ (here we use again the notation introduced after \eqref{stabconj0}). This semigroup was determined earlier: the points $(a,b,c,N)$ that fulfill \eqref{coneD} are exactly the elements of $\mathcal{S}$ and the elements $(1,1,1,1)+\tau$  for $\tau \in \mathcal{S}$. This proves the proposition.
\end{proof}

Proposition \ref{prop conj} shows that Conjecture \ref{conj mono} is equivalent to the following 
seemingly much weaker statement. 
\begin{conjecture}[Equivalent form of Conjecture \ref{conj mono}; this is Conjecture \ref{conj 111} restated]
For any three partitions $\lambda$, $\mu$ and $\nu$ of the same weight,
\[
\K{\lambda}{\mu}{\nu} \leq \K{\lambda\op{1}{1}}{\mu\op{1}{1}}{\nu\op{1}{1}}.
\]
\end{conjecture}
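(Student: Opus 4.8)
The equivalence established in Proposition~\ref{prop conj} (and in the appendix) reduces the whole hierarchy of monotonicity statements of Conjecture~\ref{conj mono} to this single step, and at the same time pinpoints why it is delicate: passing from $(\lambda,\mu,\nu)$ to $(\lambda\op{1}{1},\mu\op{1}{1},\nu\op{1}{1})$ corresponds to the vector $(1,1,1,1)$, which lies outside the semigroup $\mathcal S$ generated by $(1,1,0,1)$, $(1,0,1,1)$, $(0,1,1,1)$, $(0,0,0,1)$ precisely because $1+1+1$ is odd. Every move available from Murnaghan's monotonicity \eqref{ineq 111} together with conjugation \eqref{eq:symeq} adjoins a multiple of three cells in a fixed parity pattern, so no iteration of these can realise a single hook-increment of all three partitions at once. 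A genuinely new input is therefore unavoidable, and that is the crux of the problem.

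The line of attack I would pursue first is representation-theoretic. Write $g_{\lambda,\mu,\nu}$ as the multiplicity of $\chi^{\lambda}$ in the Kronecker product $\chi^{\mu}\cdot\chi^{\nu}$ of irreducible $S_N$-characters, equivalently $\dim(\chi^\lambda\otimes\chi^\mu\otimes\chi^\nu)^{S_N}$. For any partition $\rho$ of $N$, iterated branching shows that $\chi^{\rho}$ is a constituent of $\operatorname{Res}^{S_{N+2}}_{S_N}\chi^{\rho\op{1}{1}}$: remove the cell at the end of the first row of $\rho\op{1}{1}$ to reach $\rho\op{0}{1}=\rho\cup(1)$, then remove the cell at the foot of the first column to reach $\rho$. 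Hence, by Frobenius reciprocity, there is a nonzero $S_{N+2}$-map $\operatorname{Ind}_{S_N}^{S_{N+2}}\chi^{\rho}\to\chi^{\rho\op{1}{1}}$ for each of $\rho=\lambda,\mu,\nu$. The plan is to start from a Kronecker invariant $\phi\in(\chi^\lambda\otimes\chi^\mu\otimes\chi^\nu)^{S_N}$, induce it to $S_{N+2}$, and compose with a carefully chosen $S_{N+2}$-equivariant map into $\chi^{\lambda\op{1}{1}}\otimes\chi^{\mu\op{1}{1}}\otimes\chi^{\nu\op{1}{1}}$, so as to produce an injection $(\chi^\lambda\otimes\chi^\mu\otimes\chi^\nu)^{S_N}\hookrightarrow(\chi^{\lambda\op{1}{1}}\otimes\chi^{\mu\op{1}{1}}\otimes\chi^{\nu\op{1}{1}})^{S_{N+2}}$. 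The main obstacle — and the reason this is not routine — is that induction does not commute with the Kronecker (inner) product: $\operatorname{Ind}_{S_N}^{S_{N+2}}(\chi^\lambda\otimes\chi^\mu\otimes\chi^\nu)$ is not assembled from $\operatorname{Ind}\chi^\lambda$, $\operatorname{Ind}\chi^\mu$, $\operatorname{Ind}\chi^\nu$ in any transparent way, so controlling which isotypic components of the three target factors survive the projection, and proving that the resulting map is injective rather than merely nonzero, needs a new structural understanding of how Kronecker multiplicities change under a single hook-increment.

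As a fallback, I would first attempt the analogous monotonicity for the reduced coefficients, $\RK{\alpha\cup(1^a)}{\beta\cup(1^b)}{\gamma\cup(1^c)}\le\RK{\alpha\cup(1^{a+1})}{\beta\cup(1^{b+1})}{\gamma\cup(1^{c+1})}$, using the factorisation of Lemma~\ref{main lemma}. From $\Scol_{\alpha,\beta,\gamma}=\sigma[-\varepsilon xyz+xy+xz+yz]\cdot\polcol_{\alpha,\beta,\gamma}$ and the expansion \eqref{gseries col expanded}, $\RK{\alpha\cup(1^a)}{\beta\cup(1^b)}{\gamma\cup(1^c)}$ is the partial sum of the coefficients $q_\omega$ of $\polcol_{\alpha,\beta,\gamma}$ over those $\omega$ with $\ell_i(a,b,c)\ge\ell_i(\omega)$ for all $i$; incrementing $(a,b,c)$ by $(1,1,1)$ raises each $\ell_i$ by $1$, so the inequality reduces to the nonnegativity of the sum of the $q_\omega$ over the resulting "shell". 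Since $\polcol_{\alpha,\beta,\gamma}$ has negative coefficients in general (Example~\ref{ex:emptypart}), this is a genuine sign-control problem, which one might approach through the generating series of Theorem~\ref{formula-columns} and the identity $\polcol_{\alpha,\beta,\gamma}(-1,-1,-1)=\K{\alpha}{\beta}{\gamma}$ of Remark~\ref{rem P111}. Even granting such a reduced-coefficient monotonicity, transferring it back via Murnaghan stability only yields the Kronecker inequality for triples in the stable range, whereas the content of the conjecture lies mostly in the unstable range; so that route would at best localise the conjecture to finitely many boundary triples for each weight, with no evident termination. For this reason I expect the embedding approach to be the more promising, most plausibly first in the cases where one of $\lambda$, $\mu$, $\nu$ is a hook or has at most two rows — where positive combinatorial models for $g_{\lambda,\mu,\nu}$ are available — and then to bootstrap from there.
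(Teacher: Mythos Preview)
This statement is a \emph{conjecture} in the paper, not a theorem; the paper offers no proof. The only evidence recorded is computer verification for all triples of partitions of weight at most $16$ (Remark after Conjecture~\ref{conj 111}), together with the observation that a proof would yield an alternative route to Theorem~\ref{thm:hook stab}. Your write-up is consistent with this: you do not claim a proof, but rather outline two possible lines of attack and explain why the obvious reductions fail.

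Your diagnosis of the obstruction is exactly the one the paper makes: the vector $(1,1,1,1)$ lies outside the semigroup $\mathcal{S}$ generated by $(1,1,0,1)$, $(1,0,1,1)$, $(0,1,1,1)$, $(0,0,0,1)$ because of the parity constraint $a+b+c\equiv 0\pmod 2$, so Murnaghan monotonicity combined with conjugation cannot reach the single hook-increment. That is precisely the content of Proposition~\ref{prop conj} and the discussion around \eqref{enough}. Your two proposed approaches (an injective map on invariant spaces via induction/restriction, and sign-control on the shell coefficients of $\polcol_{\alpha,\beta,\gamma}$) go beyond anything the paper attempts; the paper neither pursues nor rules out either strategy. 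You are also right that the reduced-coefficient route, even if successful, would only settle the stable range and leave the heart of the conjecture untouched.

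In short: there is no proof in the paper to compare against, and your proposal correctly identifies the problem as open and isolates the genuine difficulty.
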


\begin{remark}
Conjecture \ref{conj 111} was checked by computer, with SAGE \cite{SAGE}, for all triples of partitions of weight at most $16$.
\end{remark}

\begin{remark}\label{alternate proof}
A proof of Conjecture \ref{conj 111} would provide an alternative proof of Theorem \ref{thm:hook stab}.
\end{remark}

\section{One the generating function for the coefficients $B_{\alpha,\beta,\gamma}$}

\subsection{Expression involving Schur functions indexed by hooks}

It is proved in Theorem \ref{prop:ABC} that the Schur generating function for the coefficients  $B_{\alpha,\beta,\gamma}$ is
\[
\sigma[XYZ+2W] \cdot \left( \frac{3}{4}+\frac{1}{4}\sigma[(\varepsilon-1) W]-\frac{1}{2} \chi[W] + \chi[YZ-X] \right)
\]
The following result is stated in a remark, with no proof,

\begin{proposition}
Fix partitions $\alpha$, $\beta$, $\gamma$. 
The coefficient $B_{\alpha,\beta,\gamma}$ in Theorem \ref{2row} is the coefficient of $s_{\alpha}[X] s_{\beta}[Y] s_{\gamma}[Z]$ in the expansion in the Schur basis of
\[
\sigma[XYZ+2W]  \cdot \left(1 - \sum_{a \text{even }, b} (-1)^b s_{(a|b)}[W] + \sum_{a, b} (-1)^b s_{(a|b)}[YZ-X] \right).
\]
\end{proposition}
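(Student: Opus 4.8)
The plan is to reduce the statement to a single identity between two \emph{symmetric series} in $X$, $Y$, $Z$, and then prove that identity using the classical expansions of power sums and of hook Schur functions into Schur functions. By Theorem \ref{prop:ABC}, the Schur generating series for the coefficients $B_{\alpha,\beta,\gamma}$ is $\sigma[XYZ+2W]$ times
\[
M=\frac34+\frac14\,\sigma[(\varepsilon-1)W]-\frac12\,\chi[W]+\chi[YZ-X],
\]
while the proposition asserts it equals $\sigma[XYZ+2W]$ times
\[
M'=1-\sum_{a\ \mathrm{even},\,b}(-1)^b s_{(a|b)}[W]+\sum_{a,b}(-1)^b s_{(a|b)}[YZ-X],
\]
the sums over nonnegative integers $a,b$. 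Since $\sigma[XYZ+2W]$ is a common factor, it suffices to prove $M=M'$ in the completed ring of symmetric functions in $X$, $Y$, $Z$; the equality of the coefficients of $s_\alpha[X]s_\beta[Y]s_\gamma[Z]$ then follows by multiplying both sides by $\sigma[XYZ+2W]$.

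The next step is to record two auxiliary identities. \textbf{(A)} For any alphabet $V$ one has $\chi[V]=\sum_{a,b\ge0}(-1)^b s_{(a|b)}[V]$; this comes from the classical expansion $p_n=\sum_{r=0}^{n-1}(-1)^r s_{(n-1-r\,|\,r)}$, by summing over $n\ge1$ and reindexing via the bijection $(a,b)=(n-1-r,r)$, then specializing the alphabet. \textbf{(B)} $\sigma[(\varepsilon-1)W]=1-2\sum_{a,b\ge0}(-1)^{a+b}s_{(a|b)}[W]$. To prove (B), I would substitute the hook case of the Jacobi--Trudi formula, $s_{(a|b)}=\sum_{k=0}^{b}(-1)^k h_{a+1+k}e_{b-k}$, put $j=b-k$, and factor the resulting triple sum as
\[
\Bigl(\sum_{j\ge0}(-1)^j e_j[W]\Bigr)\Bigl(\sum_{a,k\ge0}(-1)^a h_{a+1+k}[W]\Bigr).
\]
The first factor is $\sigma[-W]$ (Lemma \ref{L1}). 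In the second factor, grouping terms by $m=a+1+k\ge1$ and using $\sum_{a=0}^{m-1}(-1)^a=\tfrac12(1-(-1)^m)$ gives $\tfrac12\bigl(\sum_{m\ge1}h_m[W]-\sum_{m\ge1}(-1)^m h_m[W]\bigr)=\tfrac12(\sigma[W]-\sigma[\varepsilon W])$, where $\sum_m(-1)^m h_m[W]=\sigma[\varepsilon W]$ because $\sigma_t=\exp\sum_k p_k t^k/k$ and $p_k[\varepsilon]=(-1)^k$. Multiplying the two factors and using $\sigma[-W]\sigma[W]=1$ and $\sigma[-W]\sigma[\varepsilon W]=\sigma[(\varepsilon-1)W]$ yields $\sum_{a,b}(-1)^{a+b}s_{(a|b)}[W]=\tfrac12(1-\sigma[(\varepsilon-1)W])$, i.e. (B).

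Finally I would combine everything. Substituting (B) turns $\tfrac14\sigma[(\varepsilon-1)W]$ into $\tfrac14-\tfrac12\sum_{a,b}(-1)^{a+b}s_{(a|b)}[W]$; substituting (A) with $V=W$ turns $-\tfrac12\chi[W]$ into $-\tfrac12\sum_{a,b}(-1)^b s_{(a|b)}[W]$; and (A) with $V=YZ-X$ turns $\chi[YZ-X]$ into $\sum_{a,b}(-1)^b s_{(a|b)}[YZ-X]$. Then $\tfrac34+\tfrac14=1$, and the two sums over $s_{(a|b)}[W]$ combine with the coefficient $-\tfrac12\bigl((-1)^{a+b}+(-1)^b\bigr)=-\tfrac12(-1)^b((-1)^a+1)$, which equals $-(-1)^b$ for $a$ even and $0$ for $a$ odd; this produces precisely $-\sum_{a\ \mathrm{even},b}(-1)^b s_{(a|b)}[W]$, hence $M=M'$. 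The only delicate point is making the manipulations of infinite sums rigorous, but this is harmless: everything is graded (for a fixed total degree in $X,Y,Z$ only finitely many $s_{(a|b)}[W]$, of degree $a+b+1$, contribute), so each identity above holds coefficient by coefficient. The remaining work — verifying the hook Jacobi--Trudi identity and the parity sums — is routine; one could also obtain (A) and (B) uniformly by tracking $a$ and $b$ with two auxiliary variables in $\sum_{a,b}s_{(a|b)}x^a y^b$ and specializing.
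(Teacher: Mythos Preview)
Your proof is correct and follows essentially the same structure as the paper's: reduce to the identity $M=M'$, establish the hook expansion (A) of $\chi$ and the identity (B) for $\sigma[(\varepsilon-1)W]$, then combine using the parity cancellation $(-1)^b((-1)^a+1)$. The only real difference is in the derivation of (B): the paper obtains it in one stroke from the dual Cauchy identity $\sigma[(\varepsilon-1)W]=\sum_\lambda s_\lambda[1-\varepsilon](-1)^{|\lambda|}s_{\lambda'}[W]$ together with the known evaluation $s_\lambda[1-\varepsilon]\in\{0,1,2\}$ (zero off hooks), whereas you compute it directly via the hook Jacobi--Trudi determinant and a geometric-series factorization; both routes are standard and yield the same result.
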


\begin{proof}
From Cauchy Formula,
\begin{multline}
\sigma[(\varepsilon-1) W] = 
\sigma[(1-\varepsilon) (-W)] = \\
\sum_{\lambda} s_{\lambda}[1-\varepsilon] s_{\lambda}[-W]=
\sum_{\lambda} s_{\lambda}[1-\varepsilon] (-1)^{|\lambda|}s_{\lambda'}[W].
\end{multline}
After \cite[Ex. 7.43 with $t=1$]{Stanley}, $s_{\lambda}[1-\varepsilon]$ is $1$ if $\lambda$ is the empty partition, $2$ if $\lambda$ is a hook and $0$ otherwise. Therefore,
\[
\sigma[(\varepsilon-1) W] = 1+2\;\sum_{a, b \geq 0} (-1)^{1+a+b} s_{(a|b)}[W].
\]
Thus,
\[
\frac{3}{4}+\frac{1}{4}\sigma[(\varepsilon-1) W] = 1 + \frac{1}{2} \sum_{a, b} (-1)^{1+a+b} s_{(a|b)}[W].
\]
After \cite[I.\S 3. Ex. 11 (2) with $\mu=\ep$]{Macdonald}, we have 
\begin{equation}\label{chi}
\chi=\sum_{a, b} (-1)^{b} s_{(a|b)}.
\end{equation}
Therefore,
\begin{multline*}
\frac{3}{4}+\frac{1}{4}\sigma[(\varepsilon-1) W] -\frac{1}{2} \chi[W] 
=\\
1+ \frac{1}{2} \sum_{a, b} (-1)^{1+a+b} s_{(a|b)}[W] -\frac{1}{2}\sum_{a, b} (-1)^{b} s_{(a|b)}\\
=
1 -  \sum_{a \text{ even}, b} (-1)^b s_{(a|b)}[W]
\end{multline*}
Using again \eqref{chi} to rewrite $\chi[YZ-X]$, we get the following Formula for the generating function of the coefficients of $B$:
\[
\sigma[XYZ+2W]  \cdot \left(1 - \sum_{a \text{even }, b} (-1)^b s_{(a|b)}[W] + \sum_{a, b} (-1)^b s_{(a|b)}[YZ-X] \right)
 \]
\end{proof}

\subsection{Toolbox for other expressions}

In order to write in other ways the generating function for the coefficients $B_{\alpha,\beta, \gamma}$, the following formulas may be useful:
\begin{align*}
\sigma[X] \cdot \chi[X] &= \sum_{k} k \, h_k[X],\\  
\sigma[2X] \cdot \chi[X] &= \sum_{\lambda: \ell(\lambda) \leq 2} \frac{(\lambda_1-\lambda_2+1)(\lambda_1+\lambda_2)}{2} \, s_{\lambda}[X].
\end{align*}
They follow from the fact that $\sigma[t X] \chi[tX]$ is the derivative of $\sigma[tX]$ (for the first one), and that $\sigma[2 t X] \chi[ 2 tX]$ is the derivative of $\sigma[2 tX]$. Last, by Cauchy formula,
\[
\sigma[2 t X]= \sum_{\lambda} s_{\lambda}[2] s_{\lambda}[X] t^{|\lambda|}, 
\]
and $s_{\lambda}[2] = (\lambda_1-\lambda_2+1)$ is $\lambda$ has at most two parts, and is equal to $0$ otherwise.

\end{document}